\newtheorem{theorem}{Theorem}[section] 
\newtheorem{lemma}[theorem]{Lemma} 
\newtheorem{corollary}[theorem]{Corollary}
\newtheorem{remark}[theorem]{Remark}
\newtheorem{definition}[theorem]{Definition}
\newtheorem{example}[theorem]{Example}
\newtheorem{proposition}[theorem]{Proposition}
\newcommand{\mc}{\mathcal}
\newcommand{\R}{\mathbb{R}}
\newcommand{\Z}{\mathbb{Z}}
\newcommand{\N}{\mathbb{N}}
\newcommand{\eps}{\varepsilon}
\DeclareMathOperator{\argmin}{argmin}
\DeclareMathOperator{\dist}{dist}
\DeclareMathOperator{\tr}{Tr}
\DeclareMathOperator{\bd}{bd}
\DeclareMathOperator{\bdc}{bdc}
\title{Efficient space-time discretizations for tracking the boundaries of 
reachable sets}
\author{Janosch Rieger \and Kyria Wawryk}
\date{\today}
\begin{document}

\maketitle

\begin{abstract}
The reachable sets of nonlinear control systems can in general only be numerically 
approximated, and are often very expensive to calculate.
In this paper, we propose an algorithm that tracks only the boundaries of 
the reachable sets and that chooses the temporal and spatial discretizations in a 
non-uniform way to reduce the computational complexity. 
\end{abstract}

\textbf{Mathematics Subject Classification:} 93B03, 65L50.

\textbf{Keywords:} Reachable sets of control systems; Numerical approximation; 
Boundary tracking; Efficient discretization; Euler's method.

\section{Introduction}

The reachable set of a control system represents all possible states 
the system can attain at a given time.
Reachable sets have applications in several different fields, which are often,
but not always, related to safety and security
\cite{Colombo:Lorenz, Gerdts:2013, Parise}.
Since explicit formulas for reachable sets are not usually available, 
we instead must rely on numerical approximation. 
These approximations are expensive to compute as they need to track 
an entire set that evolves according to a multivalued flow. 
In particular, they require a spatial discretization of the set as well 
as a temporal discretization of the dynamics.

\medskip

The numerical methods for the computation of reachable sets are split into those 
which are designed specifically for linear control systems, and those which 
are applicable to nonlinear systems.

The reachable sets of linear systems are convex and behave in 
a rather straightforward way. 
This enables the creation of numerical methods which are theoretically robust
and perform well in practice, taking advantage of the fact that the sets can be 
approximated by relatively simple shapes such as zonotopes 
\cite{Althoff:zonotopes,Girard,Reissig:2022} or ellipsoids \cite{Kurzhanski}, 
or by a finite number of support points \cite{Baier:2007,Guernic:Support}.

The shape and behavior of reachable sets of nonlinear control systems is more 
complex, and so it is unclear how they can be discretized
and evolved in the most efficient way. 
For this reason, several different families of numerical methods have been proposed.

One such family applies the linear methods discussed above to local 
linearizations of nonlinear systems. 
Due to the linearization errors, the performance 
of these methods can depend on the degree of their nonlinearity 
(see \cite{Rungger:2018} for an analysis). 

Another family of algorithms, which are based on optimal control 
\cite{Baier:2013, Riedl:2021}, can significantly improve practical performance at 
the expense of losing guaranteed convergence. 
These methods use solvers for optimal control problems to approximate the reachable 
set at a particular time, and thus eliminate the 
need for spatial discretization at intermediate timesteps at the risk of getting 
caught in local minima and losing a significant part of the set. 

Finally, Runge-Kutta methods for nonlinear differential equations can be 
generalized to reachable set approximation. Typically, first-order methods 
are used, as even second-order convergence has prohibitively strict requirements 
on the control system \cite{Veliov, Veliov:affine}. 
Spatial discretization for these methods generally involves representing the reachable set as a subset of a grid \cite{Beyn:2007, Komarov}.
Since these subsets can become very large, storage and computational cost can be reduced by an intelligent choice of discretization \cite{Rieger:2024}, or by considering only the boundaries of the reachable sets \cite{Rieger:boundary}.

\medskip

In this paper, we aim to combine the benefits of the algorithms proposed
in \cite{Rieger:boundary} and \cite{Rieger:2024} in a new numerical
method for reachable set computation.
The boundary tracking algorithm from \cite{Rieger:boundary}
is essentially a fully discretized set-valued Euler scheme, but it reduces computational complexity by evolving only the 
boundary of the reachable set in time, and evaluating only the boundary 
of the multivalued vector field of the control system.
The adaptive refinement strategy pursued in \cite{Rieger:2024} aims
to identify a space-time discretization with a given a priori error
tolerance that minimizes the computational cost of the set-valued Euler scheme.
To develop an algorithm that possesses both advantages, we proceed as follows.

\medskip

After setting our notation and introducing reachable sets and their Euler 
approximations in Sections \ref{sec:notation} and \ref{sec:reachable:sets},
we first have to overcome two technical obstacles in Section 
\ref{sec:boundary:tracking}:
The boundary tracking algorithm as published in \cite{Rieger:boundary}
stores boundaries of discrete reachable sets in an ad-hoc fashion, without exploring the space of all such boundaries from a theoretical perspective. 
In order to take the algorithm further, we must do so now, and embed it into the framework of \cite{Rieger:grid} for working 
with boundaries of discrete sets.
Furthermore, it turns out that a naive generalization of the algorithm to 
nonuniform meshes fails, and we introduce appropriate modifications.

\medskip

In Section \ref{sec:abstract:scheme}, we develop an abstract scheme
that refines a given space-time discretization until a prescribed 
error tolerance is met.
It is a greedy algorithm in the sense that every refinement step
subdivides the time interval where the quotient of the decrease in 
error and the increase in computational cost is maximized.
In Section \ref{sec:main}, we derive a concrete estimator for the 
computational cost of the boundary tracking algorithm for nonuniform
discretizations, and prove that it meets the requirements of the
abstract refinement scheme.
The resulting algorithm is conceptually similar to the method from \cite{Rieger:2024}.
It alternates between computing approximate boundaries of the reachable sets 
for a given space-time discretization and, based on this knowledge of their geometry, 
refining the mesh. 
We demonstrate that this process converges in finite time.

\medskip

Finally, we present two numerical examples in Section \ref{sec:numerical:examples},
which suggest that our algorithm has the potential to outperform the Euler scheme 
with uniform discretization from \cite{Beyn:2007}, the boundary
tracking algorithm with uniform discretization from \cite{Rieger:boundary}
and the Euler scheme with adaptive discretization from \cite{Rieger:2024}.

\medskip

We consciously break away from the tradition to keep algorithms close to the position 
in the document where they are mentioned.
Instead, we display them grouped together on pages \pageref{Alg:BdryEulrStep}
to \pageref{Alg:IterativeMethod}, which allows the reader to retrace 
how the building blocks of the overall algorithm communicate.

\section{Notation}\label{sec:notation} 

We denote $\N_0:=\{0,1,2,\ldots\}$, $\N_1=\{1,2,3,\ldots\}$,
$\N_2=\{2,3,4,\ldots\}$ and $\Z:=\{\ldots-1,0,1,\ldots\}$.
When $J\subset\R$ is an interval, and when it is clear that $k\in\N$ or $k\in\Z$, 
we will write $k\in J$ instead of $k\in J\cap\N$ or $k\in J\cap\Z$.
In addition to the usual $d$-dimensional vector space $\R^d$, we introduce 
the space
\[\R^{0,d}:=\{(x_0,x_1,\ldots,x_d):x_0,x_1,\ldots,x_d\in\R\}\]
and define the cumulative sum
$\Sigma_+:\R^d\to\R^{0,d}$ by
$\left(\Sigma_+x\right)_k:=\sum_{j=1}^{k}x_j$ for all $k\in[0,d]$.
We also denote
\[\R_{+}^d:=\{x\in\R^d:x_i\ge 0\ \forall\,i\in[1,d]\},\quad
\R_{>0}^d:=\{x\in\R^d:x_i>0\ \forall\,i\in[1,d]\},\]
and we extend this notation to $\R^{0,d}$ in the obvious way.
We equip $\R^d$ with 
\[\|\cdot\|_\infty:\R^d\to\R_+
\quad\text{given by}\quad
\|x\|_\infty:=\max_{i\in[1,d]}|x_i|,\]
and we define the ball with center $x\in\R^d$ and radius $r>0$ by 
\[B_r(x):=\{y\in\R^d:\|x-y\|_\infty\le r\}.\]
The cardinality of a set $M\subset\R^d$ is denoted $\#M$,
and its boundary $\partial M$.

\medskip

Let $\mc{K}(\R^d)$ and $\mc{KC}(\R^d)$ denote the collections of all nonempty 
and compact subsets of $\R^d$, and all nonempty, compact and convex subsets of $\R^d$,
respectively. 
The Hausdorff distance $\dist_H:\mc{K}(\R^d)\times\mc{K}(\R^d)\to\R_+$
is given by
\[
\dist_H(M,\tilde{M})
=\max\{\dist(M,\tilde{M}),\,
\dist(\tilde{M},M)\},
\]
where 
\[\dist(M,\tilde{M})=\sup_{x\in M}\inf_{\tilde{x}\in\tilde{M}}\|x-\tilde{x}\|_\infty.\]
To approximate $\mc{K}(\R^d)$, for every $\rho>0$, we consider the grid
$\Delta_\rho:=\rho\Z^d$ and the collections of digital images
\[S_\rho:=\{M\subset\Delta_\rho:M\ne\emptyset\}
\quad\text{and}\quad 
S_\rho^+:=2^{\Delta_\rho}.\]
By $C_\rho$ we denote the collection of all sets $M\in S_\rho$,
i.e.\ sets with the property that for any $x,z\in M$ there exists
a finite sequence $(y_k)_{k=0}^n$ with $y_0=x$, $y_n=z$ and
$\|y_{k+1}-y_k\|_\infty=\rho$ for all $k\in[0,n-1]$.
These are called chain-connected sets in \cite[Definition 9]{Rieger:boundary}
and $8$-connected sets (in the plane) in \cite[Section 2.2.1]{Klette}.

\medskip

Finally, a set-valued map $F:\R^d\to\mc{K}(\R^d)$ is called $L$-Lipschitz 
with $L\in\R_+$ if
\[\dist_H(F(x),F(y))\le L\|x-y\|_\infty\quad\forall\,x,y\in\R^d,\]
and when $M\subset\R^d$, we denote $F(M):=\cup_{x\in M}F(x)$.

\section{Reachable sets and their approximations}\label{sec:reachable:sets}

In the following, we briefly introduce reachable sets and a common 
approach to their numerical computation.

\subsection{Reachable sets of differential inclusions}

We consider the differential inclusion
\begin{equation}\label{eq:ODI}
\dot{x}(t)\in F(x(t))\ \text{for almost every}\ t\in(0,T),\quad x(0)\in X_0,
\end{equation}
where $X_0\in\mc{K}(\R^d)$ is path-connected, and $F:\R^d\to\mc{KC}(\R^d)$ is $L$-Lipschitz 
and satisfies the uniform bound
\begin{equation}\label{eq:Pbound}
\sup_{x\in\R^d}\sup_{f\in F(x)}\|f\|_\infty\le P.
\end{equation}

We are interested in the following sets associated with inclusion \eqref{eq:ODI}.

\begin{definition}\label{def:reachable}
The solution set and the reachable sets of inclusion \eqref{eq:ODI} are
\begin{align*}
\mc{S}^F(T)&:=\{x(\cdot)\in W^{1,1}([0,T],\R^d):x(\cdot)\ 
\text{solves inclusion \eqref{eq:ODI}}\},\\
\mc{R}^F(t)&:=\{x(t):x(\cdot)\in \mc{S}^F(T)\},\quad t\in[0,T].
\end{align*}
\end{definition}

\subsection{The Euler scheme for uniform grids}\label{sec:Euler:scheme}

We discretize the space $\mc{K}(\R^d)$ in a way that is similar to outer Jordan 
digitization, see \cite[Definition 2.8]{Klette}.
\begin{lemma}\label{projection:properties}
For any numbers $\alpha,\rho>0$  with $\alpha\ge\rho/2$, 
the mapping
\[\pi^\alpha_\rho:\mc{K}(\R^d)\to S_\rho
\quad\text{given by}\quad
\pi^\alpha_\rho(M):=B_\alpha(M)\cap\Delta_\rho\]
is well-defined, and for all path-connected $M\in\mc{K}(\R^d)$,
we have $\pi^\alpha_\rho(M)\in C_\rho$.
\end{lemma}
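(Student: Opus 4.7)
The plan is to prove the two claims separately, with the well-definedness being essentially a rounding argument and the chain-connectedness requiring a path discretization.

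For well-definedness, I would first note that $\pi^\alpha_\rho(M) \subset \Delta_\rho$ holds by construction, so the only issue is nonemptiness. Given any $x \in M$, I would round each coordinate of $x$ to the nearest multiple of $\rho$, obtaining a grid point $q \in \Delta_\rho$ with $\|x - q\|_\infty \le \rho/2 \le \alpha$. Hence $q \in B_\alpha(M) \cap \Delta_\rho = \pi^\alpha_\rho(M)$, which gives $\pi^\alpha_\rho(M) \in S_\rho$.

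For the chain-connectedness claim, I would take arbitrary $x,z \in \pi^\alpha_\rho(M)$ and pick witnesses $x',z' \in M$ with $\|x-x'\|_\infty \le \alpha$ and $\|z-z'\|_\infty \le \alpha$. Using path-connectedness of $M$, there is a continuous (hence uniformly continuous) path $\gamma:[0,1] \to M$ with $\gamma(0) = x'$, $\gamma(1) = z'$. I would pick a partition $0 = t_0 < t_1 < \dots < t_N = 1$ fine enough that $\|\gamma(t_{k+1}) - \gamma(t_k)\|_\infty \le \rho/2$, and set $p_k := \gamma(t_k) \in M$. For each interior $k$, I would round $p_k$ to its nearest grid point $q_k \in \Delta_\rho$, taking $q_0 := x$ and $q_N := z$ at the endpoints; by $\alpha \ge \rho/2$, each $q_k$ lies in $\pi^\alpha_\rho(M)$.

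The key computation is that for consecutive $q_k, q_{k+1}$, the triangle inequality gives
\[
\|q_{k+1} - q_k\|_\infty \le \|q_{k+1}-p_{k+1}\|_\infty + \|p_{k+1}-p_k\|_\infty + \|p_k - q_k\|_\infty \le \tfrac{\rho}{2}+\tfrac{\rho}{2}+\tfrac{\rho}{2} = \tfrac{3\rho}{2}.
\]
Since $q_{k+1} - q_k \in \rho\Z^d$, every coordinate of $q_{k+1} - q_k$ is an integer multiple of $\rho$ of absolute value at most $3\rho/2$, hence lies in $\{-\rho,0,\rho\}$. Therefore $\|q_{k+1}-q_k\|_\infty \in \{0, \rho\}$, and by deleting repetitions from the sequence $(q_k)$ one obtains a chain of step size exactly $\rho$ from $x$ to $z$, establishing $\pi^\alpha_\rho(M) \in C_\rho$.

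I expect the main subtlety, rather than the main obstacle, to be the bookkeeping in the last step: one must verify that the choice $\|p_{k+1}-p_k\|_\infty \le \rho/2$ in the path partition, combined with the rounding error bound $\rho/2$, yields exactly the critical threshold $3\rho/2$ at which grid points in $\rho\Z^d$ can still only be immediate $\infty$-neighbors. Anything larger would admit steps of size $2\rho$ and break the argument.
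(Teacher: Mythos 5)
Your overall strategy (rounding for nonemptiness, discretizing a path and rounding its samples for chain-connectedness) is sound, and the $3\rho/2$ threshold argument at the end is exactly the right way to convert approximate closeness into grid steps of size exactly $\rho$. However, there is a genuine gap at the two endpoints of your chain. You set $q_0:=x$ and $p_0:=x'=\gamma(0)$, but the only bound you have there is $\|q_0-p_0\|_\infty=\|x-x'\|_\infty\le\alpha$, not $\le\rho/2$. Since the lemma only assumes $\alpha\ge\rho/2$ and $\alpha$ may be much larger than $\rho/2$ (indeed the paper applies the lemma and its relatives with $\alpha=\alpha(h,\rho)>\rho/2$ and with $\alpha+\kappa$), your key estimate $\|q_1-q_0\|_\infty\le 3\rho/2$ fails for $k=0$, and symmetrically for $k=N-1$: the first and last steps of your chain can have length up to $\alpha+\rho$, which need not decompose into unit grid steps inside $\pi^\alpha_\rho(M)$ by your argument.

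The gap is repairable without changing your main idea. Let $q_0'$ and $q_N'$ be the nearest grid points to $x'$ and $z'$, and run your chain from $q_0'$ to $q_N'$ (now every $\|q_k-p_k\|_\infty\le\rho/2$ and the $3\rho/2$ bound is valid throughout). To connect $x$ to $q_0'$, observe that both lie in the discrete box $B_\alpha(x')\cap\Delta_\rho$: $x$ because $x'$ is a witness for $x\in B_\alpha(M)$, and $q_0'$ because $\rho/2\le\alpha$. This box is a nonempty product of sets of consecutive multiples of $\rho$ (nonempty in each coordinate since $2\alpha\ge\rho$), hence any two of its points are joined by a chain with steps of $\ell^\infty$-length exactly $\rho$ obtained by moving each coordinate by at most $\rho$ per step; moreover the box is contained in $B_\alpha(M)\cap\Delta_\rho=\pi^\alpha_\rho(M)$, so the connecting chain stays in the set. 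The same argument connects $q_N'$ to $z$ inside $B_\alpha(z')\cap\Delta_\rho$. Concatenating the three chains and deleting repetitions as you propose completes the proof.
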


The Euler map approximates the flow of inclusion \eqref{eq:ODI}.

\begin{definition}\label{def:Euler}
For any given $\alpha,h,\rho>0$, 
we define the set-valued functions $\phi_h:\R^d\to\mc{KC}(\R^d)$
and $\Phi_{h,\rho}^\alpha:\R^d\to S_\rho^+$ by
\begin{align*}
\phi_h(x):=x+hF(x)\quad\text{and}\quad
\Phi_{h,\rho}^\alpha(x):=\pi^\alpha_\rho(\phi_h(x)).
\end{align*}
\end{definition}

The following proposition is nearly identical to \cite[Proposition 12]{Rieger:boundary} and
is essential for the boundary tracking algorithm to be 
discussed in Section \ref{sec:boundary:tracking} to work.

\begin{proposition}\label{Prop:SetsAreConnected}
Let $h,\rho>0$, and define
\begin{equation}\label{alpha}
\alpha(h,\rho):=(1+Lh)\tfrac{\rho}{2}.
\end{equation}
Then for any $M\in C_\rho$ and $\alpha\ge\alpha(h,\rho)$, we have 
$\Phi_{h,\rho}^{\alpha}(M)\in C_\rho$. 
\end{proposition}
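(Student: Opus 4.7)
My plan is to decompose $\Phi^\alpha_{h,\rho}(M)$ into chain-connected pieces indexed by $M$ and then bridge neighboring pieces. For each $x \in M$, I will consider $D_x := \pi^\alpha_\rho(x + hF(x))$; since $x + hF(x)$ is compact, convex, nonempty, and hence path-connected, Lemma~\ref{projection:properties} gives $D_x \in C_\rho$. Using that $B_\alpha(\cdot)$ and therefore $\pi^\alpha_\rho$ distribute over unions, I will write $\Phi^\alpha_{h,\rho}(M) = \bigcup_{x \in M} D_x$, so that together with the chain structure of $M$ the problem reduces to a local bridging claim: whenever $x, x' \in M$ satisfy $\|x - x'\|_\infty = \rho$, the union $D_x \cup D_{x'}$ is chain-connected. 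Induction along any chain in $M$ will then deliver a chain in $\Phi^\alpha_{h,\rho}(M)$ between any two prescribed points.

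\textbf{The bridging step.} For the local claim, since $D_x$ and $D_{x'}$ are each chain-connected, it will suffice to exhibit grid points $q \in D_x$ and $q' \in D_{x'}$ with $\|q - q'\|_\infty \leq \rho$, since internal chains in $D_x$ and $D_{x'}$ can then be joined by the single edge $qq'$. The $L$-Lipschitz property of $F$ together with $\|x - x'\|_\infty = \rho$ gives
\[\dist_H\bigl(x + hF(x),\, x' + hF(x')\bigr) \leq \rho + hL\rho = 2\alpha,\]
which furnishes $y \in x + hF(x)$ and $y' \in x' + hF(x')$ with $\|y - y'\|_\infty \leq 2\alpha$. I will then produce $q$ and $q'$ coordinate by coordinate: in each coordinate $j$, the intervals $[y_j - \alpha, y_j + \alpha]$ and $[y'_j - \alpha, y'_j + \alpha]$ both have length $2\alpha \geq \rho$ (so contain grid values of $\rho\Z$) and overlap (since $|y_j - y'_j| \leq 2\alpha$), and a short case analysis picks grid values $q_j$ and $q'_j$ in the respective intervals with $|q_j - q'_j| \leq \rho$. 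Assembling the coordinates yields the required $q \in B_\alpha(y) \cap \Delta_\rho \subset D_x$ and $q' \in B_\alpha(y') \cap \Delta_\rho \subset D_{x'}$.

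\textbf{Main obstacle.} I expect the coordinate-wise bridging to be the most delicate step, since it is there that the precise threshold $\alpha(h, \rho) = (1 + Lh)\rho/2$ enters essentially: the value $2\alpha$ simultaneously controls the Hausdorff distance between neighboring $\phi_h$-images and equals the length of each $\alpha$-ball interval in a coordinate, and $2\alpha \geq \rho$ is exactly what is needed to guarantee both the existence of a grid value in each interval and a pair of such values across the two intervals at distance at most $\rho$. Once this is in hand, the induction along $M$ that assembles the global chain is routine.
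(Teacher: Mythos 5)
Your argument is correct, and it is essentially the standard proof of this fact (the paper itself does not prove the proposition but defers to \cite[Proposition 12]{Rieger:boundary}, which argues along the same lines): decompose $\Phi^\alpha_{h,\rho}(M)=\cup_{x\in M}\pi^\alpha_\rho(x+hF(x))$ into chain-connected pieces via Lemma \ref{projection:properties}, and bridge pieces over adjacent $x,x'\in M$ using $\dist_H(x+hF(x),x'+hF(x'))\le(1+Lh)\rho=2\alpha(h,\rho)\le 2\alpha$. The deferred coordinate-wise case analysis does go through: since distances between points of $\Delta_\rho$ are multiples of $\rho$ and $2\alpha\ge\rho$, either the overlap of the two coordinate intervals contains a grid value (take $q_j=q_j'$) or the grid values immediately below and above it lie in the respective intervals and differ by exactly $\rho$.
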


The following Euler scheme for approximating the reachable sets from Definition 
\ref{def:reachable} employs a uniform discretization: 
Every timestep has length $h>0$, and every point computed is an element 
of $\Delta_\rho$.

\begin{definition}\label{reachable:sets:uniform}
Let $n\in\N_1$, let $\rho>0$, and let $h:=T/n$. 
Then the set of all Euler approximations to inclusion \eqref{eq:ODI} is
\[\mc{S}_{h,\rho}^F(T):=\{(y_k)_{k=0}^n: 
y_0\in\pi^{\rho/2}_{\rho}(X_0),\
y_k\in\Phi^{\alpha(h,\rho)}_{h,\rho}(y_{k-1})\ \forall\,k\in[1,n]\},\]
and the corresponding approximate reachable sets are
\begin{equation}\label{eq:def:unifapproxreach}
\mc{R}_{h,\rho}^F(k):=\{y_k:(y_j)_{j=0}^n\in\mc{S}_{h,\rho}^F(T)\},\quad k\in[0,n].
\end{equation}
\end{definition}

The discrete solution set $S_{h,\rho}^F(T)$ is only used for theoretical purposes.
In practice, the discrete reachable sets are computed using the recursion
\begin{equation}\label{set:iteration}\begin{aligned}
\mc{R}_{h,\rho}^F(0)
&=\pi^{\rho/2}_{\rho}(X_0)\\
\mc{R}_{h,\rho}^F(k)
&=\Phi^{\alpha(h,\rho)}_{h,\rho}(\mc{R}_{h,\rho}^F(k-1))\ \forall\,k\in[1,n].
\end{aligned}\end{equation}
Since $X_0$ is path-connected, and in view of Lemma \ref{projection:properties} 
and Proposition \ref{Prop:SetsAreConnected}, we have $\mc{R}_{h,\rho}^F(k)\in C_\rho$
for all $k\in[0,n]$.

\section{Boundary tracking for nonuniform grids}\label{sec:boundary:tracking}

In this section we introduce a storage format for boundaries of digital images, 
and summarize how the algorithm from \cite{Rieger:boundary} tracks the
boundaries of the reachable sets from \eqref{eq:def:unifapproxreach}.
Finally, we show how the restriction operator from \cite{Rieger:grid}
helps to overcome technical difficulties that arise when nonuniform 
discretizations are considered.

\subsection{Boundary pairs of digital images}

We restate the definition of boundary layers of digital images from \cite{Rieger:boundary}.
They are related to the so-called distance transform \cite[Section 3.4.2]{Klette}.

\begin{definition}
For every $\rho>0$ and $M\subset\Delta_\rho$, we define 
\begin{align*}
\partial_\rho^0M
&:=\{x\in M:
\exists\,z\in M^c\cap\Delta_\rho\ \text{with}\ \|x-z\|_\infty=\rho\},\\
\partial_\rho^jM&:=\{x\in M^c\cap\Delta_\rho:
\dist(x,\partial_\rho^0M)=j\rho\},\quad j\in\N_1,\\
\partial_\rho^{-j}M&
:=\{x\in M:\dist(x,\partial_\rho^0M)=j\rho\},\quad j\in\N_1.
\end{align*}
\end{definition}

The following theorem summarizes the relevant results from \cite{Rieger:grid}:
\begin{itemize}
\item [i)] Every digital image, i.e.\ every element of $M\in S_\rho$, is uniquely 
characterized by the pair $(\partial^0_\rho M,\partial^1_\rho M)$.
\item [ii)] There exists a well-defined mapping $\partial R_\rho^{\rho'}$
that maps the boundary pair of a set $M\in S_\rho$ to the boundary pair
of a metrically \eqref{R:Hausdorff} and topologically \eqref{R:connected}
similar set $R_\rho^{\rho'}(M)\in S_{\rho'}$ that is discretized at a different 
resolution.
\end{itemize}
For an illustration of the theorem see the first diagram in Figure \ref{fig:diagrams}.

\begin{theorem}
For every $\rho>0$, the mapping 
\begin{equation*}
\tr_\rho: S_\rho\to S_\rho^+\times S_\rho^+,\quad
\tr_\rho(M):=(\partial^0_\rho M,\partial^1_\rho M),
\end{equation*}
is injective, and we write 
$\bd_\rho:=\tr_\rho(S_\rho)$ and $\bdc_\rho:=\tr_\rho(C_\rho)$.

\medskip

For nested grids $\Delta_{\rho'}\subsetneq\Delta_\rho$ with $\rho'\in\rho\N_2$,
the operator $R_\rho^{\rho'}:S_\rho\to S_{\rho'}$ given by 
$R_\rho^{\rho'}(M):=B_{\rho'/2}(M)\cap\Delta_{\rho'}$ satisfies
\begin{align}
&\forall\,M\in S_\rho:\quad
\dist_H(R_\rho^{\rho'}(M),M)\le\rho'/2,&\label{R:Hausdorff}\\
&\forall\,M\in C_\rho:\quad
R_\rho^{\rho'}(M)\in C_{\rho'},\label{R:connected}
\end{align}
and the lifted operator
\begin{equation}\label{def:dR}
\partial R_\rho^{\rho'}:\bd_\rho\to\bd_{\rho'},\quad
\partial R_\rho^{\rho'}
:=\tr_{\rho'}\circ R_\rho^{\rho'}\circ\tr_\rho^{-1}
\end{equation}
is well-defined.
\end{theorem}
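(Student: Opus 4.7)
The plan is to establish the three claims in order: injectivity of $\tr_\rho$, the properties \eqref{R:Hausdorff} and \eqref{R:connected} of $R_\rho^{\rho'}$, and well-definedness of the lifted operator. The boundary pair directly classifies grid points in $\partial^0_\rho M \subseteq M$ and $\partial^1_\rho M \subseteq \Delta_\rho \setminus M$, so the heart of injectivity is to determine the membership of the remaining grid points using only the pair.

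To this end, I would analyse the chain-connected components of $\Delta_\rho \setminus (\partial^0_\rho M \cup \partial^1_\rho M)$. Each such component $C$ is either fully inside $M$ or fully inside $\Delta_\rho \setminus M$: an adjacent pair $z \in C \cap M$, $z' \in C \cap M^c$ would place $z$ into $\partial^0_\rho M$, contradicting $z \in C$. Next, since $\Delta_\rho$ is itself chain-connected, $C$ must be adjacent to some $y \in \partial^0_\rho M \cup \partial^1_\rho M$, unless this union is empty---a degenerate case where a walk argument immediately forces $M = \Delta_\rho$. If $y \in \partial^0_\rho M$, then any $z \in C$ adjacent to $y$ must lie in $M$, because otherwise $z \in M^c$ would satisfy $\dist(z,\partial^0_\rho M) = \rho$ and hence $z \in \partial^1_\rho M$; the symmetric argument places $C$ inside $M^c$ when $y \in \partial^1_\rho M$. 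This classifies every grid point from the boundary pair alone, yielding injectivity.

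The metric and topological properties of $R_\rho^{\rho'}$ are then more elementary. For \eqref{R:Hausdorff}, elements of $R_\rho^{\rho'}(M) \subseteq B_{\rho'/2}(M)$ lie within $\rho'/2$ of $M$ by construction, while any $x \in M$ has a nearest coarse grid point (round each coordinate to the closest multiple of $\rho'$) within $\rho'/2$ in $\|\cdot\|_\infty$, which therefore lies in $R_\rho^{\rho'}(M)$. For \eqref{R:connected}, given $y, \tilde y \in R_\rho^{\rho'}(M)$ with witnesses $x, \tilde x \in M$, I would take a fine chain $x = z_0, \ldots, z_n = \tilde x$ in $M$ and project each $z_k$ coordinatewise to a nearest $p_k \in \Delta_{\rho'}$ under a consistent tie-breaking rule. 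Using $\rho' = m\rho$ with $m \in \N_2$, a one-dimensional analysis shows that a fine step $|z_{k+1,i} - z_{k,i}| \in \{0,\rho\}$ produces $|p_{k+1,i} - p_{k,i}| \in \{0,\rho'\}$, so $\|p_{k+1} - p_k\|_\infty \in \{0,\rho'\}$; after deleting repeats this is a coarse chain inside $R_\rho^{\rho'}(M)$. Finally, $y$ and $p_0$ are both coarse grid points within $\rho'/2$ of $x$, so every coordinate of $y - p_0$ is a multiple of $\rho'$ of absolute value at most $\rho'$, forcing $\|y - p_0\|_\infty \in \{0,\rho'\}$; prepending $p_0$ and appending $p_n$ closes the chain from $y$ to $\tilde y$.

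Well-definedness of $\partial R_\rho^{\rho'}$ then reduces to a composition check: $\tr_\rho^{-1}$ is defined on $\bd_\rho$ by injectivity; $R_\rho^{\rho'}$ maps $S_\rho$ into $S_{\rho'}$ because the nearest-coarse-point construction above preserves non-emptiness; and $\tr_{\rho'}$ lands in $\bd_{\rho'}$ by definition. The main obstacle will be the component analysis underpinning injectivity---in particular, ruling out ``mixed'' components and ensuring that every component meets the boundary pair---because one has to exploit the precise distance-$\rho$ definition of $\partial^1_\rho M$ to convert adjacency to $\partial^0_\rho M$ into an unambiguous membership statement.
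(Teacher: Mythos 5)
The paper itself offers no proof of this theorem---it is explicitly presented as a summary of results imported from \cite{Rieger:grid}---so there is no in-paper argument to compare against. Judged on its own terms, your proposal is correct and complete in outline. The component analysis for injectivity works: a component $C$ of $\Delta_\rho\setminus(\partial^0_\rho M\cup\partial^1_\rho M)$ cannot be mixed (a transition edge would create a point of $\partial^0_\rho M$ inside $C$), must touch the boundary pair unless $M=\Delta_\rho$, and its adjacency type determines its membership; the one detail worth spelling out is that for $z\in C\cap M^c$ adjacent to $y\in\partial^0_\rho M$ you get $\dist(z,\partial^0_\rho M)=\rho$ exactly because distinct points of $\Delta_\rho$ are at $\|\cdot\|_\infty$-distance at least $\rho$, which is what converts adjacency into membership in $\partial^1_\rho M$.

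The remaining steps are also sound. For \eqref{R:Hausdorff}, coordinatewise rounding gives both inclusions and non-emptiness of $R_\rho^{\rho'}(M)$. For \eqref{R:connected}, the key quantitative fact is the one you use implicitly: with $\rho'=m\rho$, $m\ge 2$, a consistent coordinatewise rounding $r$ satisfies $|r(t)-r(s)|\le|t-s|+\rho'<2\rho'$ whenever $|t-s|\le\rho$, and since the difference is a multiple of $\rho'$ it must be $0$ or $\rho'$; this is where $\rho'\in\rho\N_2$ (rather than merely $\rho'>\rho$) is not even needed beyond $\rho<\rho'$, but the nestedness $\Delta_{\rho'}\subsetneq\Delta_\rho$ is what makes the projected points land in $B_{\rho'/2}(M)\cap\Delta_{\rho'}$ and the endpoint-matching step ($\|y-p_0\|_\infty\in\{0,\rho'\}$) work. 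Well-definedness of \eqref{def:dR} is then the routine composition check you describe.
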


The mapping $\partial R_\rho^{\rho'}$ can be implemented efficiently,
see \cite[Algorithm 1]{Rieger:grid}.

\subsection{Boundary tracking for uniform grids}

In view of Proposition \ref{Prop:SetsAreConnected}, the operator
\begin{equation}\label{well-defined:Psi}
\Psi_{h,\rho}:\bdc_\rho\to\bdc_\rho,\quad
\Psi_{h,\rho}(D)
:=\tr_\rho\circ\,\Phi^{\alpha(h,\rho)}_{h,\rho}\circ\tr_\rho^{-1}(D)
\end{equation}
with $\alpha(h,\rho)$ as in \eqref{alpha} is well-defined.
It completes the second diagram in Figure \ref{fig:diagrams}.
The boundary tracking algorithm from \cite{Rieger:boundary} uses $\Psi_{h,\rho}$
to lift recursion \eqref{set:iteration},
which evolves the full reachable sets, to a recursion 
\begin{equation*}\begin{aligned}
\tr_\rho(\mc{R}_{h,\rho}^F(0))
&=\tr_\rho(\pi^{\rho/2}_\rho(X_0)),\\
\tr_\rho(\mc{R}_{h,\rho}^F(k))
&=\Psi_{h,\rho}
(\tr_\rho(\mc{R}_{h,\rho}^F(k-1)))
\quad\forall\,k\in[1,n].
\end{aligned}\end{equation*}
Since $X_0$ is path-connected, it follows from Lemma \ref{projection:properties} 
and \eqref{well-defined:Psi} that
\begin{equation}\label{all:iterates:connected}
\tr_\rho(\mc{R}_{h,\rho}^F(k))\in\bdc_\rho\quad\forall\,k\in[0,n].
\end{equation}

\begin{figure}[t]
\centering
\begin{subfigure}[t]{0.3\textwidth}\centering
$\begin{tikzcd}
&\bd_{\rho}
  \arrow[dashed]{r}{\partial R_{\rho}^{\rho'}}
&\bd_{\rho'}\\ 
&S_{\rho}
  \arrow{u}{\tr_{\rho}}
  \arrow{r}{R_{\rho}^{\rho'}} 
&S_{\rho'} \arrow{u}[swap]{\tr_{\rho'}}
\end{tikzcd}$
\end{subfigure}%
\hfill
\begin{subfigure}[t]{0.3\textwidth}\centering
$\begin{tikzcd}
\bdc_\rho
  \arrow[dashed]{r}{\Psi_{h,\rho}} 
&\bdc_\rho\\ 
C_\rho 
  \arrow{u}{\tr_\rho} 
  \arrow{r}{\Phi^{\alpha(h,\rho)}_{h,\rho}}   
&C_\rho
  \arrow{u}[swap]{\tr_\rho}
\end{tikzcd}$%
\end{subfigure}%
\hfill
\begin{subfigure}[t]{0.3\textwidth}\centering
$\begin{tikzcd}
\bdc_\rho
  \arrow[dashed]{r}{\Psi_{h,\rho,\rho'}} 
&\bdc_{\rho'}\\ 
C_\rho 
  \arrow{u}{\tr_\rho} 
  \arrow{r}{\Phi^{\alpha(h,\rho)}_{h,\rho,\rho'}}   
&C_{\rho'}
  \arrow{u}[swap]{\tr_{\rho'}}
\end{tikzcd}$%
\end{subfigure}
\caption{Mathematically equivalent computations carried out 
either with entire sets (bottom row) or with their boundaries (top row).
\label{fig:diagrams}}
\end{figure}

The following theorem is the main result from \cite{Rieger:boundary},
which allows us to evaluate $\Psi_{h,\rho}(D)$ directly, i.e.\
without knowledge of $\tr_\rho^{-1}(D)$. 
Statement \eqref{all:iterates:connected} is essential for its proof,
which relies on topological arguments.

\begin{theorem}\label{thm:boundary:tracking}
Let $h\in(0,\frac{1}{4L}]$ and $\rho>0$, denote $\alpha=\alpha(h,\rho)$ 
from \eqref{alpha} and
\begin{equation}\label{kappa}
\kappa=\kappa(h,\rho):=2(1+Lh)(\tfrac{\alpha}{1-Lh}+\rho),
\end{equation}
and let $D=(D_0,D_1)\in\bdc_\rho$.
Furthermore, consider the set-valued mappings 
$\phi_h^{\partial}:\R^d\to\mc{K}(\R^d)$ 
and $\Phi_{h,\rho}^{\partial,\alpha}:\R^d\to S_\rho^+$
given by
\[\phi_h^\partial(x):=\partial\phi_h(x)
\quad\text{and}\quad
\Phi_{h,\rho}^{\partial,\alpha}(x):=\pi_{\rho}^{\alpha}(\phi_h^\partial(x)),\]
and the sets
\begin{equation}\label{easy:step}
D_{-1}:=\partial_\rho^{-1}(\tr^{-1}(D))
\quad\text{and}\quad 
D_2:=\partial_\rho^{2}(\tr^{-1}(D)).
\end{equation}
Then we can form the auxiliary collections 
\begin{equation}\label{eq:SDefns}\begin{gathered}
W_0^0:= \Phi^{\partial,\alpha+\kappa}_{h,\rho}(D_0)\cap\Phi^\alpha_{h,\rho}(D_0),\quad
W_1=\Phi^{\partial,\alpha}_{h,\rho}(D_1\cup D_2),\\
W_0^{-1}:=\Phi^{\partial,\alpha}_{h,\rho}(D_{-1}),
\end{gathered}\end{equation}
and we can express $\Psi_{h,\rho}(D)=\tilde{D}$
with the pair $\tilde{D}=(\tilde{D}_0,\tilde{D}_1)$ given by
\begin{align*}
\tilde{D}_1&:=\{x\in W_1: \dist(x,W_0^0\cup W_0^{-1})=\rho\},\\
\tilde{D}_0&:=\{x\in W_0^0\cup W_0^{-1}: \dist(x,\tilde{D}_1)=\rho\}
\end{align*}
in terms of $D_{-1},\ D_0,\ D_1$, and $D_2$.
\end{theorem}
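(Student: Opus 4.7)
I would denote $M := \tr_\rho^{-1}(D)$ and $\tilde{M} := \Phi_{h,\rho}^{\alpha}(M)$, so that by Lemma~\ref{projection:properties} and Proposition~\ref{Prop:SetsAreConnected} both $M,\tilde M \in C_\rho$ and $\Psi_{h,\rho}(D) = \tr_\rho(\tilde M) = (\partial_\rho^0 \tilde M, \partial_\rho^1 \tilde M)$. The task is then to verify that the explicit constructions in the statement recover this boundary pair from $D_{-1},D_0,D_1,D_2$ alone, i.e.\ without ever materialising the full set $M$. Since the theorem is nearly identical to Proposition~12 of \cite{Rieger:boundary}, I would mirror that proof's structure, adding the bookkeeping required by the present notation.

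The first step is an \emph{interior shielding} lemma: whenever $x \in M$ satisfies $\dist(x, \partial_\rho^0 M) \ge 2\rho$, every grid point $y \in \pi_\rho^\alpha(\phi_h(x))$ is safely inside $\tilde M$, in the sense that every $y'\in\Delta_\rho$ with $\|y-y'\|_\infty=\rho$ also lies in $\tilde M$. The key ingredients are the uniform bound \eqref{eq:Pbound}, the $L$-Lipschitz property of $F$ and the defining choice $\alpha = (1+Lh)\rho/2$, which together ensure (by the same mechanism as in Proposition~\ref{Prop:SetsAreConnected}) that whenever $x,x'\in M$ are $\rho$-chain-neighbours, the projected image $\pi_\rho^\alpha(\phi_h(x))\cup\pi_\rho^\alpha(\phi_h(x'))$ is itself chain-connected on $\Delta_\rho$. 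This localisation implies that only $D_0 \cup D_{-1}$ can contribute to $\partial_\rho^0 \tilde M$, and only $D_0 \cup D_1 \cup D_2$ can lie within $\rho$ of $\partial_\rho^0 \tilde M$.

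Next I would analyse the boundary contributions by source. If $y \in \partial_\rho^0 \tilde M$ has a preimage $x \in D_{-1}$, no point in the topological interior of $\phi_h(x)$ can account for $y$, because such a point is surrounded inside $\phi_h(x)$ and, after $\alpha$-inflation and grid rounding, produces a grid point all of whose $\rho$-neighbours lie in $\tilde M$. Hence $y \in \pi_\rho^\alpha(\phi_h^\partial(x)) \subseteq W_0^{-1}$. For a preimage $x \in D_0$ the analogous statement still holds, but an extra issue arises: because $x$ already lies on $\partial_\rho^0 M$, the boundary point $y$ of $\tilde M$ may only be captured by a slightly enlarged inflation of $\phi_h^\partial(x)$. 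A Gronwall-type estimate using $\dist_H(\phi_h(x),\phi_h(x')) \le (1+Lh)\|x-x'\|_\infty$ together with the hypothesis $h\le 1/(4L)$ (which keeps $1-Lh$ bounded away from zero) yields the explicit radius $\alpha+\kappa$ of \eqref{kappa} as the worst-case slack; intersecting with $\Phi_{h,\rho}^\alpha(D_0)$ then discards spurious points that happen to fall outside $\tilde M$. I expect this calibration of $\kappa$, together with the topological argument guaranteeing that no element of $\partial_\rho^0 \tilde M$ slips through, to be the main technical hurdle.

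For $\partial_\rho^1 \tilde M$, shielding again restricts preimages to $D_1\cup D_2$, and any grid point just outside $\tilde M$ must be the $\alpha$-projection of a boundary point of some $\phi_h(x)$ for such $x$, giving $\partial_\rho^1 \tilde M \subseteq W_1$. The filter $\dist(\cdot,W_0^0\cup W_0^{-1}) = \rho$ then isolates exactly the first outer layer of $\tilde M$ inside $W_1$, once one verifies that $W_0^0\cup W_0^{-1}\subseteq\tilde M$ covers $\partial_\rho^0 \tilde M$ and that the remaining elements of $W_1$ lie outside $\tilde M$. The reciprocal filter $\dist(\cdot,\tilde D_1) = \rho$ applied to $W_0^0\cup W_0^{-1}$ then recovers $\partial_\rho^0 \tilde M$, exploiting the chain-connectedness underlying \eqref{all:iterates:connected} to guarantee that every element of $\partial_\rho^0 \tilde M$ has a $\rho$-neighbour in $\partial_\rho^1 \tilde M=\tilde D_1$.
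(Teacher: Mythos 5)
First, a point of reference: the paper does not actually prove Theorem \ref{thm:boundary:tracking}. It is stated as ``the main result from \cite{Rieger:boundary}'' and its proof is only cited (later results refer to ``its proof, given in \cite{Rieger:boundary}''), so there is no in-paper argument to compare yours against line by line. Judged on its own terms, your outline does identify the strategy that the statement forces: shield the deep interior of $M:=\tr_\rho^{-1}(D)$ so that only the layers $D_{-1},D_0,D_1,D_2$ can generate the new boundary pair, attribute $\partial_\rho^0\tilde M$ to $\partial\phi_h(x)$ for $x\in D_{-1}\cup D_0$ (with the enlarged radius $\alpha+\kappa$ and the intersection with $\Phi^\alpha_{h,\rho}(D_0)$ handling the layer $D_0$), attribute $\partial_\rho^1\tilde M$ to $x\in D_1\cup D_2$, and let the two adjacency filters extract the exact layers.

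As a proof, however, the proposal has genuine gaps: essentially every load-bearing step is deferred. (i) You never derive the value of $\kappa$ in \eqref{kappa}; you say you ``expect'' a Gronwall-type estimate to produce it and call this ``the main technical hurdle'', which is precisely the part that cannot be waved at. (ii) The correctness of the filters is asserted rather than shown. To get $\tilde D_1=\partial_\rho^1\tilde M$ one must prove both that every $y\in\partial_\rho^1\tilde M$ lies in $W_1$ \emph{and} has a $\rho$-neighbour in $W_0^0\cup W_0^{-1}$ (which requires $\partial_\rho^0\tilde M\subseteq W_0^0\cup W_0^{-1}$, itself nontrivial for the $D_0$-contribution), and conversely that any $x\in W_1$ with $\dist(x,W_0^0\cup W_0^{-1})=\rho$ actually lies outside $\tilde M$: a point of $W_1\cap\tilde M$ that merely fails to belong to $W_0^0\cup W_0^{-1}$ is not automatically excluded, and ruling it out needs the connectedness argument the paper alludes to via \eqref{all:iterates:connected}. (iii) The inclusion $\partial_\rho^1\tilde M\subseteq W_1$, i.e.\ that every grid point in the first outer layer of $\tilde M$ is within $\alpha$ of $\partial\phi_h(x)$ for some $x\in D_1\cup D_2$, is exactly the kind of statement that fails under small perturbations of the setup (the paper notes the analogue is false for nonuniform grids with $\rho'>\rho$), so it requires an explicit estimate exploiting $h\le\frac{1}{4L}$ rather than an appeal to plausibility. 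In short: the roadmap is right, but the quantitative estimates and set inclusions that constitute the proof are missing.
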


Theorem \ref{thm:boundary:tracking} is the theoretical foundation for Algorithm 
\ref{Alg:BdryEulrStep}, which is displayed on page \pageref{Alg:BdryEulrStep}.
Note that it is easy and inexpensive to compute the sets in \eqref{easy:step}.
Figure \ref{fig:SigmaExplain} provides a sketch of the sets used to calculate 
\eqref{eq:SDefns}.

\begin{figure}
    \centering
    \begin{subfigure}[t]{0.3\textwidth}
        \centering
        \begin{tikzpicture}
        \draw[->] (0,0)--(3,0) node[right]{};
        \draw[->] (0,0)--(0,2) node[above]{};
        \draw[gray, thick, dashed, fill=gray, fill opacity=0.4](1.5,1) ellipse[x radius=0.85, y radius = 0.65];
    \end{tikzpicture}
        \caption{$\phi_h(x)$ (gray), and $\phi_h^\partial(x)$ (dashed)}
    \end{subfigure}
    ~
    \begin{subfigure}[t]{0.3\textwidth}
    \centering
    \begin{tikzpicture}
        \draw[->] (0,0)--(3,0) node[right]{};
        \draw[->] (0,0)--(0,2) node[above]{};
        \draw[gray, thick, densely dashed](1.5,1) ellipse[x radius=0.85, y radius=0.65] ;
        \draw[red, thick](1.5,1) ellipse[x radius=0.85+0.15, y radius = 0.65+0.15];
        \draw[red, thick](1.5,1) ellipse[x radius=0.85-2*0.15, y radius = 0.65-2*0.15];
        \fill[red, fill opacity=0.1, even odd rule] (1.5,1) ellipse[x radius=0.85+0.15, y radius = 0.65+0.15] ellipse[x radius=0.85-2*0.15, y radius = 0.65-2*0.15];

        \draw[black, thick, |-|]   (1.5+0.85-2*0.15,1)--(1.5+0.85+0.15,1) node[right]{$2\alpha+\kappa$};
        
    \end{tikzpicture}
    \caption{Blowup required for $x\in D_0$}        
    \end{subfigure}
    ~
    \begin{subfigure}[t]{0.3\textwidth}
        \centering
        \begin{tikzpicture}
        \draw[->] (0,0)--(3,0) node[right]{};
        \draw[->] (0,0)--(0,2) node[above]{};
        \draw[gray, thick, densely dashed](1.5,1) ellipse[x radius=0.85, y radius=0.65] ;
        \draw[red, thick](1.5,1) ellipse[x radius=0.85+0.15, y radius = 0.65+0.15];
        \draw[red, thick](1.5,1) ellipse[x radius=0.85-0.15, y radius = 0.65-0.15];
        \fill[red, fill opacity=0.1, even odd rule] (1.5,1) ellipse[x radius=0.85+0.15, y radius = 0.65+0.15] ellipse[x radius=0.85-0.15, y radius = 0.65+-0.15];

        \draw[black, thick, |-|]   (1.5+0.85-0.15,1)--(1.5+0.85+0.15,1) node[right]{$2\alpha$};
        
        \end{tikzpicture}
        \caption{Blowup required for $x\in D_{k}$, $k\in\{-1,1,2\}$}
    \end{subfigure}
    
    \caption{Sets used in the calculation of $W_0^0,\ W_0^{-1}$ and $W_1$ in equations \eqref{eq:SDefns} and \eqref{where:sigma:comes:from}.}
    \label{fig:SigmaExplain}
\end{figure}
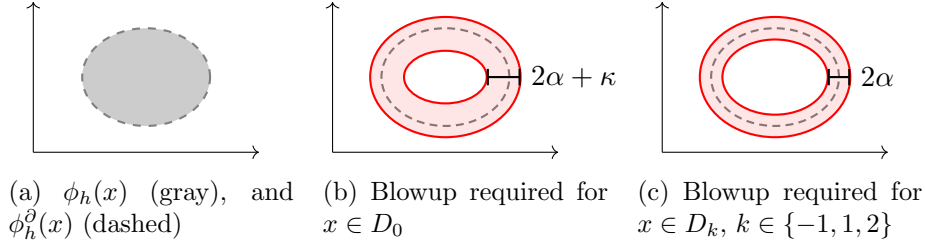

\subsection{An Euler scheme for nonuniform grids}

We present a numerical method that generalizes the Euler scheme
from Section \ref{sec:Euler:scheme} to nonuniform temporal and spatial discretization
parameters
\[{\bm{\delta}}:=(\bm{h},\bm{t},\bm{\rho})\in\R^n_{>0}\times\R_+^{0,n}\times\R_{>0}^{0,n},\]
and is specifically designed to support the boundary tracking algorithm to be 
proposed in Section \ref{sec:bdry:nonuniform}.
The vector $\bm{h}=(h_1,\ldots,h_n)$ contains stepsizes with $\sum_{k=1}^nh_k=T$,
the vector $\bm{t}=(t_0,\ldots,t_n)=\sum_+\bm{h}$ stores the temporal nodes,
and the vector $\bm{\rho}=(\rho_0,\ldots,\rho_n)$ stores the mesh sizes of the 
spatial grids $\Delta_{\rho_0},\ldots,\Delta_{\rho_n}$ associated with the
temporal nodes $t_0,\ldots,t_n$.

\medskip

The following mapping is to be interpreted as a single Euler step with stepsize $h>0$ 
and blow-up $\alpha>0$ from the grid $\Delta_\rho$ to the grid $\Delta_{\rho'}$.
The reason for its particular construction will become apparent in Theorem 
\ref{thm:nonuniformbdryworks}.

\begin{definition}\label{def:NonunifBdryEuler}
Let $\alpha,h,\rho,\rho'>0$ satisfy $\rho'/\rho\in(0,1)\cup\N_1$ as well as
$\alpha\ge\min(\rho,\rho')/2$.
We define the set-valued function $\Phi_{h,\rho,\rho'}^\alpha:\R^d\to S_{\rho'}$ by 
\[\Phi_{h,\rho,\rho'}^\alpha(x)
:=\begin{cases}
R_{\rho}^{\rho'}(\Phi_{h,\rho}^{\alpha}(x)),&\rho<\rho',\\
\Phi_{h,\rho'}^{\alpha}(x),&\rho\ge\rho'
\end{cases}\]
with $\Phi_{h,\rho}^\alpha$ from Definition \ref{def:Euler}.
\end{definition}

Our next result generalizes Proposition \ref{Prop:SetsAreConnected}.

\begin{corollary}\label{Phi:stays:in:C}
Let $h,\rho,\rho'>0$ with $\rho'/\rho\in(0,1)\cup\N_1$,
and let $\alpha(h,\rho)$ be as in Proposition \ref{Prop:SetsAreConnected}. 
Then for any $M\in C_\rho$, we have $\Phi_{h,\rho,\rho'}^{\alpha(h,\rho)}\in C_{\rho'}.$
\end{corollary}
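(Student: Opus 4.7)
The plan is to treat the two branches of Definition \ref{def:NonunifBdryEuler} separately.

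In the case $\rho<\rho'$, the hypothesis on $\rho'/\rho$ forces $\rho'/\rho\in\N_2$, so the restriction operator $R_\rho^{\rho'}$ is well-defined. By definition, $\Phi_{h,\rho,\rho'}^{\alpha(h,\rho)}(M)=R_\rho^{\rho'}(\Phi_{h,\rho}^{\alpha(h,\rho)}(M))$. Proposition \ref{Prop:SetsAreConnected} applied to $M\in C_\rho$ yields $\Phi_{h,\rho}^{\alpha(h,\rho)}(M)\in C_\rho$, and property \eqref{R:connected} then gives $\Phi_{h,\rho,\rho'}^{\alpha(h,\rho)}(M)\in C_{\rho'}$, as required.

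The case $\rho\ge\rho'$ is not covered by this black-box reduction, because the grids $\Delta_\rho$ and $\Delta_{\rho'}$ are in general not nested. Here $\Phi_{h,\rho,\rho'}^{\alpha(h,\rho)}(M)=\Phi_{h,\rho'}^{\alpha(h,\rho)}(M)=B_{\alpha(h,\rho)}(\phi_h(M))\cap\Delta_{\rho'}$, and my plan is to re-run the argument underlying Proposition \ref{Prop:SetsAreConnected} with the target grid replaced by the finer $\Delta_{\rho'}$. The two ingredients needed transfer cleanly. First, each $\phi_h(m)$ is compact and convex, hence path-connected, and $\alpha(h,\rho)\ge\rho/2\ge\rho'/2$, so Lemma \ref{projection:properties} delivers $\pi_{\rho'}^{\alpha(h,\rho)}(\phi_h(m))\in C_{\rho'}$ for every $m\in M$. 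Second, for adjacent $m,m'\in M$ with $\|m-m'\|_\infty=\rho$, the Lipschitz estimate $\dist_H(\phi_h(m),\phi_h(m'))\le(1+Lh)\rho=2\alpha(h,\rho)$ ensures that $B_{\alpha(h,\rho)}(\phi_h(m))$ and $B_{\alpha(h,\rho)}(\phi_h(m'))$ overlap in $\R^d$.

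The hard part will be the bridging step: showing that the two chain-connected pieces $\pi_{\rho'}^{\alpha(h,\rho)}(\phi_h(m))$ and $\pi_{\rho'}^{\alpha(h,\rho)}(\phi_h(m'))$ contain $\Delta_{\rho'}$-grid points at $\|\cdot\|_\infty$-distance at most $\rho'$, since the overlap in $\R^d$ can be as thin as a single point, so a naive nearest-grid-point argument does not immediately land in both blow-ups. The observation that makes the transfer go through is that the blow-up is at least as generous relative to $\Delta_{\rho'}$ as it was relative to $\Delta_\rho$ in the uniform case, since $\alpha(h,\rho)/\rho'\ge\alpha(h,\rho)/\rho=(1+Lh)/2$; consequently the grid-geometric bridging construction in the proof of Proposition \ref{Prop:SetsAreConnected} can be replayed verbatim with $\Delta_\rho$ replaced by $\Delta_{\rho'}$. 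Concatenating these bridges along the chain-connectedness of $M$ in $\Delta_\rho$ then produces, for any two points of $\Phi_{h,\rho,\rho'}^{\alpha(h,\rho)}(M)$, a chain in $\Delta_{\rho'}$ joining them, which establishes membership in $C_{\rho'}$.
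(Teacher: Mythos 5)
Your proof is correct and follows the same two-case structure as the paper's, dictated by the two branches of Definition \ref{def:NonunifBdryEuler}; your first case (Proposition \ref{Prop:SetsAreConnected} followed by \eqref{R:connected}) is word-for-word the paper's argument. The difference lies in the second case. The paper disposes of it in one line: ``$\alpha(h,\rho)\ge\alpha(h,\rho')$, hence $\Phi_{h,\rho'}^{\alpha(h,\rho)}(M)\in C_{\rho'}$ directly by Proposition \ref{Prop:SetsAreConnected}.'' You instead observe that the proposition, as literally stated, takes an input set that is chain-connected at the \emph{target} scale, which $M\in C_\rho$ is not when $\rho'<\rho$ (its chain steps have length $\rho$, and $\Delta_\rho\not\subset\Delta_{\rho'}$ unless the grids happen to be nested), and you therefore re-run the proposition's proof with the target grid refined to $\Delta_{\rho'}$. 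Your two transfer observations --- the Hausdorff drift $(1+Lh)\rho=2\alpha(h,\rho)$ between images of $\rho$-adjacent source points is unchanged, while $\alpha(h,\rho)\ge\rho/2\ge\rho'/2$ makes the blow-up relatively more generous on the denser grid, so Lemma \ref{projection:properties} still applies pointwise and every estimate in the bridging construction is preserved or improved --- are precisely the substance that the paper compresses into the inequality $\alpha(h,\rho)\ge\alpha(h,\rho')$. You do leave the bridging step itself as an assertion that the construction replays verbatim rather than executing it, but the paper's own proof asserts strictly more with strictly less justification, and your ratio argument identifies exactly why the transfer is safe; so this is an acceptable, indeed more careful, rendering of the same proof.
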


\begin{proof}
When $\rho<\rho'$, then $\Phi_{h,\rho}^{\alpha(h,\rho)}(x)\in C_\rho$
by Proposition \ref{Prop:SetsAreConnected}, and by statement 
\eqref{R:connected}, we have 
$R_{\rho}^{\rho'}(\Phi_{h,\rho}^{\alpha(h,\rho)}(x))\in C_{\rho'}$.
If $\rho\ge\rho'$, then $\alpha(h,\rho)\ge\alpha(h,\rho')$,
and hence $\Phi_{h,\rho'}^{\alpha(h,\rho)}(x)\in C_{\rho'}$
directly by Proposition \ref{Prop:SetsAreConnected}.
\end{proof}

We generalize Definition \ref{reachable:sets:uniform} to nonuniform discretizations.

\begin{definition}\label{def:ApproxReach}
Let $n\in\N_1$, and let
${\bm{\delta}}=(\bm{h},\bm{t},\bm{\rho})\in\R^n_{>0}\times\R_+^{0,n}\times\R_{>0}^{0,n}$
be such that $\sum_{k=1}^nh_k=T$ as well as $\bm{t}=\Sigma_+\bm{h}$ and 
$\rho_{k+1}/\rho_{k}\in(0,1)\cup\N_1$ 
for all $k\in[0,n-1]$. 
Then the set of all Euler approximations to inclusion \eqref{eq:ODI} is
\[\mc{S}_{\bm{\delta}}^F(T)
:=\{(y_k)_{k=0}^n: 
y_0\in \pi_{\rho_0}^{\rho_0/2}(X_0),\ 
y_{k}\in \Phi_{h_k,\rho_{k-1},\rho_k}^{\alpha(h_k,\rho_{k-1})}(y_{k-1})\ 
\forall\, k\in[1,n]\},\]
and the discrete approximation to $\mc{R}^F(t_k)$ from Definition \ref{def:reachable} 
is given by  
\begin{equation}\label{eq:defapproxreach}
\mc{R}_{\bm{\delta}}^F(k):=\{y_k,y\in\mc{S}_{\bm{\delta}}^F(T)\},\quad k\in[0,n].
\end{equation}
\end{definition}

The proof of the following error bound is very similar to proofs in the literature,
see e.g.\ \cite{Beyn:2007} and \cite{Komarov}. 
The additive error terms $\chi_j({\bm{\delta}})$ occur whenever the operator
$R_{\rho_k}^{\rho_{k+1}}$ is invoked, see Definition \ref{def:NonunifBdryEuler}.

\begin{theorem}\label{err:thm:NoE2}
For any $n\in \N_1$, any 
${\bm{\delta}}:=(\bm{h},\bm{t},\bm{\rho})\in\R_{>0}^n\times\R_+^{0,n}\times\R_{>0}^{0,n}$ 
satisfying the requirements of Definition \ref{def:ApproxReach}, and any $k\in[0,n]$, 
the exact reachable sets of inclusion \eqref{eq:ODI} with property \eqref{eq:Pbound}
and the discrete reachable sets from Definition \ref{def:ApproxReach} satisfy
\begin{equation}\label{the:error}\begin{aligned}
&\dist_H(\mc{R}^F(t_k),\mc{R}_{\bm{\delta}}^F(k))\\
&\le\tfrac{\rho_0}{2}e^{Lt_k}+\sum_{j=1}^{k}e^{L(t_{k}-t_j)}
[(e^{Lh_j}-1)(Ph_j+\tfrac{(1+Lh_j)^2}{Lh_j}\tfrac{\rho_{j-1}}{2}
+\chi_j({\bm{\delta}})]
\end{aligned}\end{equation}
for $k\in[0,n]$, where
\[\chi_j({\bm{\delta}}):=\begin{cases}
\tfrac{\rho_j}{2},&\rho_{j-1}<\rho_j,\\0,&\text{otherwise}.
\end{cases}\]
\end{theorem}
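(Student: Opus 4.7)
The plan is to proceed by induction on $k$, establishing a one-step recurrence
\[
e_k:=\dist_H(\mc{R}^F(t_k),\mc{R}_{\bm{\delta}}^F(k))\le e^{Lh_k}\,e_{k-1}+\mu_k
\quad\text{with}\quad
\mu_k=(e^{Lh_k}-1)\bigl(Ph_k+\tfrac{(1+Lh_k)^2}{Lh_k}\tfrac{\rho_{k-1}}{2}\bigr)+\chi_k(\bm{\delta}),
\]
and then unfolding it telescopically. The base case $k=0$ reduces to $\dist_H(X_0,\pi_{\rho_0}^{\rho_0/2}(X_0))\le\rho_0/2$, which matches the right-hand side of \eqref{the:error} at $k=0$, since the sum is empty there.

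For the inductive step, I introduce the auxiliary set $Q_k$ obtained by flowing $\mc{R}_{\bm{\delta}}^F(k-1)$ forward under the exact inclusion \eqref{eq:ODI} from $t_{k-1}$ to $t_k$, and split $e_k\le\dist_H(\mc{R}^F(t_k),Q_k)+\dist_H(Q_k,\mc{R}_{\bm{\delta}}^F(k))$ by the triangle inequality. The first summand is controlled by the classical Filippov--Gronwall continuous dependence of $\mc{R}^F$ on its initial data and contributes $e^{Lh_k}\,e_{k-1}$. For the second summand, I reduce to pointwise comparisons between the exact flow from $y\in\mc{R}_{\bm{\delta}}^F(k-1)$ and the discrete step $\Phi_{h_k,\rho_{k-1},\rho_k}^{\alpha(h_k,\rho_{k-1})}(y)$, and further decompose the pointwise error into (i) an Euler truncation part controlled by a standard integral expansion of $x(t_k)-y$ using $\|F\|_\infty\le P$ and the $L$-Lipschitz continuity of $F$; (ii) a projection part of size at most $\alpha(h_k,\rho_{k-1})=(1+Lh_k)\tfrac{\rho_{k-1}}{2}$ arising from applying $\pi_{\rho_{k-1}}^{\alpha(h_k,\rho_{k-1})}$ to $\phi_{h_k}(y)$; and (iii) a restriction part of size at most $\rho_k/2$, invoked only when $\rho_{k-1}<\rho_k$ via \eqref{R:Hausdorff}, which exactly matches $\chi_k(\bm{\delta})$.

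Iterating the recurrence then produces
\[
e_k\le\Bigl(\prod_{j=1}^{k}e^{Lh_j}\Bigr)\tfrac{\rho_0}{2}+\sum_{j=1}^{k}\Bigl(\prod_{i=j+1}^{k}e^{Lh_i}\Bigr)\mu_j,
\]
and because $\bm{t}=\Sigma_+\bm{h}$ these products collapse to $e^{Lt_k}$ and $e^{L(t_k-t_j)}$ respectively, yielding \eqref{the:error}. The main obstacle will be calibrating $\mu_k$ to produce the precise coefficient $\tfrac{(e^{Lh_k}-1)(1+Lh_k)^2}{Lh_k}\tfrac{\rho_{k-1}}{2}$ on the spatial error rather than the more immediate $(1+Lh_k)\tfrac{\rho_{k-1}}{2}$; this arises naturally if one applies Filippov's lemma to a piecewise-linear interpolant of the discrete iterates, so that the projection error $\alpha(h_k,\rho_{k-1})$ enters as a pointwise initial-data shift amplified first by the Lipschitz constant $(1+Lh_k)$ of $\phi_{h_k}$ and then by the factor $\tfrac{e^{Lh_k}-1}{Lh_k}\ge 1$ produced by integrating the perturbation along the exact flow, while the truncation part independently delivers $(e^{Lh_k}-1)Ph_k$, so that both summands naturally share the common $(e^{Lh_k}-1)$ prefactor.
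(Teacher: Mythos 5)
Your proposal is correct and follows the standard local-error-plus-Gronwall telescoping argument that the paper itself defers to (it cites \cite{Beyn:2007} and \cite{Komarov} rather than giving a proof); the decomposition into Filippov continuous dependence contributing $e^{Lh_k}e_{k-1}$, an Euler truncation term $(e^{Lh_k}-1)Ph_k$, a projection term of size $\alpha(h_k,\rho_{k-1})$, and the restriction term $\chi_k(\bm{\delta})$ from \eqref{R:Hausdorff} is exactly how the stated bound arises. Your closing worry about calibrating the spatial coefficient is unnecessary: the direct projection bound $(1+Lh_k)\tfrac{\rho_{k-1}}{2}$ is already dominated by $\tfrac{(e^{Lh_k}-1)(1+Lh_k)^2}{Lh_k}\tfrac{\rho_{k-1}}{2}$ because $\tfrac{(e^{Lh_k}-1)(1+Lh_k)}{Lh_k}\ge 1$, so no piecewise-linear interpolant construction is needed to close the estimate.
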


As in Section \ref{sec:Euler:scheme}, the discrete reachable sets 
\eqref{eq:defapproxreach} satisfy a recursion 
\begin{equation}\label{nonuniform:recursion}\begin{aligned}
\mc{R}_{\bm{\delta}}^F(0)
&=\pi^{\rho_0/2}_{\rho_0}(X_0),\\
\mc{R}_{\bm{\delta}}^F(k)
&=\Phi^{\alpha(h_k,\rho_{k-1})}_{h_k,\rho_{k-1},\rho_k}(\mc{R}_{\bm{\delta}}^F(k-1))
\quad\forall\,k\in[1,n].
\end{aligned}\end{equation}
By Lemma \ref{projection:properties} and Corollary \ref{Phi:stays:in:C},
we have $\mc{R}_{\bm{\delta}}^F(k)\in C_{\rho_k}$ for all $k\in[0,n]$.

\subsection{Boundary tracking for nonuniform grids}\label{sec:bdry:nonuniform}

In view of Corollary \ref{Phi:stays:in:C}, the mapping
\begin{equation*}
\Psi_{h,\rho,\rho'}(D)
:=\tr_{\rho'}\circ\Phi_{h,\rho,\rho'}^{\alpha(h,\rho)}\circ\tr_{\rho}^{-1}(D)
\quad\forall\,D\in\bdc_\rho
\end{equation*}
is an operator $\Psi_{h,\rho,\rho'}:\bdc_{\rho}\to\bdc_{\rho'}$.
Its action is illustrated in the third diagram in Figure \ref{fig:diagrams}.
It allows us to lift iteration \eqref{nonuniform:recursion}
to the recursion 
\begin{equation}\label{recursion:in:bd}\begin{aligned}
\tr_{\rho_0}(\mc{R}_{\bm{\delta}}^F(0))
&=\tr_{\rho_0}(\pi^{\rho_0/2}_{\rho_0}(X_0)),\\
\tr_{\rho_k}(\mc{R}_{{\bm{\delta}}}^F(k))
&=\Psi_{h_k,\rho_{k-1},\rho_k}
(\tr_{\rho_{k-1}}(\mc{R}_{\bm{\delta}}^F(k-1)))
\quad\forall\,k\in[1,n],
\end{aligned}\end{equation}
which is displayed on page \pageref{Alg:boundary:tracking} as Algorithm 
\ref{Alg:boundary:tracking}.
Again because of Lemma \ref{projection:properties}, we have
$\tr_{\rho_k}(\mc{R}_{{\bm{\delta}}}^F(k))\in\bdc_{\rho_k}$ for all $k\in[0,n]$.
Note that these iterates are the boundary pairs of the sets $\mc{R}_{{\bm{\delta}}}^F(k)$
which satisfy the error bound \eqref{the:error}.

\medskip

To carry out recursion  \eqref{recursion:in:bd} without knowledge of the set 
$\mc{R}_{h,\rho}^F(k-1)$, 
we would ideally like to generalize Theorem \ref{thm:boundary:tracking} 
to nonuniform grids.
An inspection of its proof, given in \cite{Rieger:boundary}, shows that 
the theorem remains valid when $\rho_{k+1}\le\rho_k$.
Elementary counterexamples show, however, that the theorem is false when 
$\rho_{k+1}>\rho_k$.
The construction in Definition \ref{def:NonunifBdryEuler} circumvents
this problem, as it allows the representation \eqref{bdry:eq:nonuniformworks}
below.

\begin{theorem}\label{thm:nonuniformbdryworks} 
Let $h\in(0,\frac{1}{4L}]$, 
and let $\rho,\rho'>0$ satisfy $\rho'/\rho\in(0,1)\cup\N_1$.
Let $\alpha=\alpha(h,\rho)$ and $\kappa=\kappa(h,\rho)$ 
as in \eqref{alpha} and \eqref{kappa}, 
let $\tilde{\rho}:=\min(\rho,\rho')$,
and consider a pair $D=(D_0,D_1)\in\bdc_\rho$.
Denoting
\[D_{-1}:=\partial_\rho^{-1}(\tr^{-1}(D))
\quad\text{and}\quad 
D_2:=\partial_\rho^{2}(\tr^{-1}(D)),\]
we can form the auxiliary collections
\begin{equation}\label{where:sigma:comes:from}\begin{gathered}
W_0^0:= \Phi^{\partial,\alpha+\kappa}_{h,\tilde{\rho}}(D_0)\cap\Phi^\alpha_{h,\tilde{\rho}}(D_0),\quad
W_1:=\Phi^{\partial,\alpha}_{h,\tilde{\rho}}(D_1\cup D_2),\\
W_0^{-1}:=\Phi^{\partial,\alpha}_{h,\tilde{\rho}}(D_{-1})
\end{gathered}\end{equation}
and the pair $\tilde{D}=(\tilde{D}_0,\tilde{D}_1)$ given by
$\tilde{D}_1:=\{x\in W_1:\dist(x,W_0^0\cup W_0^{-1})=\tilde{\rho}\}$
and $\tilde{D}_0:=\{x\in W_0^0\cup W_0^{-1}:\dist(x,\tilde{D}_1)=\tilde{\rho}\}$.
Then we can express
\begin{equation}\label{bdry:eq:nonuniformworks}
\Psi_{h,\rho,\rho'}(D)
=\begin{cases}
\tilde{D},&\rho'\le\rho,\\
\partial R_{\tilde{\rho}}^{\rho'}(\tilde{D}),&\text{else}.
\end{cases}
\end{equation}
in terms of $D_{-1}$, $D_0$, $D_1$ and $D_2$,
using $\partial R_{\tilde{\rho}}^{\rho'}$ from \eqref{def:dR}.
\end{theorem}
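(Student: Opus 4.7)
The plan is to split the argument into two cases according to whether $\rho'>\rho$ or $\rho'\le\rho$, since the definition of $\Phi_{h,\rho,\rho'}^{\alpha(h,\rho)}$ (Definition \ref{def:NonunifBdryEuler}) and the definition of $\tilde{\rho}=\min(\rho,\rho')$ both branch at exactly this point. In each case, I will reduce the claim to Theorem \ref{thm:boundary:tracking}, invoked at the right resolution.

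The case $\rho'>\rho$ is the shorter one. Here $\tilde{\rho}=\rho$, and Definition \ref{def:NonunifBdryEuler} gives $\Phi_{h,\rho,\rho'}^{\alpha(h,\rho)}=R_{\rho}^{\rho'}\circ\Phi_{h,\rho}^{\alpha(h,\rho)}$, hence
\[
\Psi_{h,\rho,\rho'}(D)
=\tr_{\rho'}\circ R_{\rho}^{\rho'}\circ\Phi_{h,\rho}^{\alpha(h,\rho)}\circ\tr_\rho^{-1}(D).
\]
Because $\tilde{\rho}=\rho$, the sets $W_0^0,W_1,W_0^{-1}$ and the pair $\tilde{D}$ defined in the statement coincide literally with those of Theorem \ref{thm:boundary:tracking}. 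That theorem therefore yields $\tr_\rho^{-1}(\tilde{D})=\Phi_{h,\rho}^{\alpha(h,\rho)}(\tr_\rho^{-1}(D))$. Substituting and using the definition \eqref{def:dR} of $\partial R_\rho^{\rho'}$ gives $\partial R_{\tilde{\rho}}^{\rho'}(\tilde{D})=\Psi_{h,\rho,\rho'}(D)$, which is the second line of \eqref{bdry:eq:nonuniformworks}.

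The case $\rho'\le\rho$ is where the actual work lies. Here $\tilde{\rho}=\rho'$, and Definition \ref{def:NonunifBdryEuler} reduces the Euler step to $\Phi_{h,\rho,\rho'}^{\alpha(h,\rho)}=\Phi_{h,\tilde{\rho}}^{\alpha(h,\rho)}$, so the goal is
\[
\tr_{\tilde{\rho}}\bigl(\Phi_{h,\tilde{\rho}}^{\alpha(h,\rho)}(\tr_\rho^{-1}(D))\bigr)=\tilde{D}.
\]
This is exactly the statement of Theorem \ref{thm:boundary:tracking} except that the \emph{input} lives on $\Delta_\rho$ while the \emph{output} lives on a possibly finer grid $\Delta_{\tilde{\rho}}$. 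My plan is to revisit the proof in \cite{Rieger:boundary} step by step and check that the same topological inclusion-exclusion argument goes through under this asymmetry. The key points to verify are: (i) the connectedness hypothesis of Proposition \ref{Prop:SetsAreConnected} is preserved at the output resolution $\tilde{\rho}$, because $\alpha(h,\rho)\ge\alpha(h,\tilde{\rho})$ since $\tilde{\rho}\le\rho$; (ii) the characterization of points in $W_0^0$ and $W_0^{-1}$ via the intersection with $\Phi^{\partial,\alpha+\kappa}_{h,\tilde{\rho}}(D_0)$ is correct, which amounts to checking that the blow-up parameter $\kappa=\kappa(h,\rho)$ still dominates the local thickness of $\phi_h(x)$ when sampled on $\Delta_{\tilde{\rho}}$; and (iii) the distance threshold $\tilde{\rho}$ in the definition of $\tilde{D}_0,\tilde{D}_1$ is correct since neighbours in the image grid are now at distance $\tilde{\rho}$, not $\rho$.

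The main obstacle is exactly step (ii) above: in the original proof, the fact that a boundary point of $\phi_h(x)$ lies within $\alpha+\kappa$ of every image boundary point relied on the uniform grid spacing. On a finer output grid this needs to be re-examined, but crucially $\alpha+\kappa$ was calibrated with $\rho$ (the coarser of the two) on purpose, so it will still absorb the additional discretization slack. Once (i)--(iii) are verified, the same argument from \cite{Rieger:boundary} identifies $\tilde{D}_0$ and $\tilde{D}_1$ with the two boundary layers of $\Phi_{h,\tilde{\rho}}^{\alpha(h,\rho)}(\tr_\rho^{-1}(D))$, yielding the first line of \eqref{bdry:eq:nonuniformworks} and completing the proof.
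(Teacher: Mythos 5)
Your proposal is correct and follows essentially the same route as the paper: both cases are handled by reducing to Theorem \ref{thm:boundary:tracking}, with the case $\rho'>\rho$ resolved by composing the uniform-grid result at resolution $\rho$ with the lifted restriction operator $\partial R_\rho^{\rho'}$, and the case $\rho'\le\rho$ resolved by observing that the proof from \cite{Rieger:boundary} carries over verbatim when the output grid is finer than the input grid. The paper likewise does not re-derive the latter verification in detail (it states that ``a proof identical to that of Theorem \ref{thm:boundary:tracking}'' gives $\Psi_{h,\rho,\tilde{\rho}}(D)=\tilde{D}$), so your checklist (i)--(iii) is, if anything, slightly more explicit about what must be inspected.
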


\begin{proof}
A proof identical to that of Theorem \ref{thm:boundary:tracking} shows 
$\tilde{D}\in\bdc_{\tilde{\rho}}$ and
\begin{align}\label{bdry:eq:Stildes}
\Psi_{h,\rho,\tilde{\rho}}(D)=\tilde{D}.
\end{align}
If $\rho'\le\rho$, we have $\tilde{\rho}=\rho'$ and \eqref{bdry:eq:Stildes} 
implies \eqref{bdry:eq:nonuniformworks}.
If $\rho'>\rho$, then $\tilde{\rho}=\rho$ and using equation \eqref{def:dR}, 
 Definition \ref{def:NonunifBdryEuler}, and equation 
\eqref{bdry:eq:Stildes}, we verify statement \eqref{bdry:eq:nonuniformworks} 
by the computation
\begin{align*}
&\Psi_{h,\rho,\rho'}(D)
=\tr_{\rho'}\circ\Phi_{h,\rho,\rho'}^\alpha\circ\tr_{\rho}^{-1}(D)\\
&=\tr_{\rho'}\circ R_{\rho}^{\rho'}\circ\Phi_{h,\rho}^\alpha
\circ\tr_{\rho}^{-1}(D)
=\tr_{\rho'}\circ R_{\rho}^{\rho'}\circ\Phi_{h,\rho,\tilde{\rho}}^\alpha
\circ\tr_{\rho}^{-1}(D)\\
&=\partial R_{\rho}^{\rho'}\circ\tr_\rho\circ\Phi_{h,\rho,\tilde{\rho}}
\circ\tr^{-1}(D)
=\partial R_{\rho}^{\rho'}\circ\Psi_{h,\rho,\tilde{\rho}}(D)
=\partial R_{\rho}^{\rho'}(\tilde{D}).
\end{align*}   
\end{proof}

Algorithm \ref{Alg:BdryEulrStep} implements the operator $\Psi_{h,\rho,\rho'}$ 
as proposed in Theorem \ref{thm:nonuniformbdryworks}. 
In addition to an image $\Psi_{h,\rho,\rho'}(D_0,D_1)$, 
it returns the numbers $\hat{c}_R\in\N_{1}$ and $\hat{c}_F\in\N_1$ of grid points,
which can be counted incidentally and are the key ingredients for the 
surrogate volumes \eqref{def:Rvol} and \eqref{def:Fvol}.
An implementation of the operator $\partial R_{\rho}^{\rho'}$,
which occurs in line \ref{R:is:needed}, is available
in \cite[Algorithm 1]{Rieger:grid}.

\section{An abstract subdivision scheme}\label{sec:abstract:scheme}

The primary contribution of this paper is Algorithm \ref{Alg:IterativeMethod}, 
displayed on page \pageref{Alg:IterativeMethod},
which alternates between executing recursion \eqref{recursion:in:bd} and 
refining the discretization based on information gathered in the recursion. 
This section develops an abstract subdivision scheme for the latter aspect,
displayed as Algorithm \ref{Alg:RefineDisc} on page \pageref{Alg:RefineDisc},
which will be applied to concrete cost and error estimators $C$ and $E$ 
in Section \ref{sec:main}.

\medskip

The following discretizations are admissible for the subdivision process. 

\begin{definition}\label{aleph:n}
By $\aleph_n\subset\R_{>0}^n\times\R_+^{n+1}\times\R_{>0}^{n+1}$
we denote the set of all tuples 
${\bm{\delta}}=(\bm{h},\bm{t},\bm{\rho})
\in\R_{>0}^n\times\R_+^{n+1}\times\R_{>0}^{n+1}$
such that
\begin{align}
&\bm{t}=\Sigma_+\bm{h},\quad 
T=t_n\label{eq:htreqs}\\
&\|\bm{h}\|_\infty\le\tfrac{1}{4L},\quad
\|\bm{\rho}\|_\infty\le\tfrac{P}{8L} \label{eq:htbasics}\\
&\tfrac{\rho_j}{\rho_{k}}\in\{4^\ell:\ell\in\Z\}\quad\forall\, j,k\in[0,n],\label{eq:rhofracs}\\
&\rho_j=2LP\left(h_j\right)^2\quad\forall\, j\in[1,n],\label{eq:coupling}
\end{align} 
Additionally, we denote
\begin{equation*}
    \aleph:=\cup_{n\in\N_1}\aleph_n.
\end{equation*}
\end{definition}

We will refine a given discretization by repeated subdivision according to the following rule.

\begin{definition}
Given any $n\in\N_1$ and  ${\bm{\delta}}=(\bm{h},\bm{t},\bm{\rho})\in\aleph_n$, we define the function
\begin{equation}\label{subdivision:rule:full}
\psi[{\bm{\delta}};j]=\left(\psi_h[\bm{h};j], \psi_t[\bm{t};j],\psi_\rho[\bm{\rho};j]\right)
\end{equation}
given by
\begin{equation}\begin{aligned}
\psi_h[\bm{h};j]&:=\begin{cases}
\bm{h},&j=0,\\
(h_1,...,h_{j-1},h_j/2,h_j/2,h_{j+1},...,h_n),&j\in[1,n],
\end{cases}\\
\psi_t[\bm{t};j]&:=\begin{cases}\bm{t},&j=0,\\
(t_0,...,t_{j-1},t_j-h_j/2,t_j,t_{j+1},...,t_n),&j\in[1,n],
\end{cases}\\
\psi_\rho[\bm{\rho};j]&:=\begin{cases}
(\rho_0/4,\rho_1,...,\rho_n),&j=0,\\
(\rho_0,...,\rho_{j-1},\rho_j/4,\rho_j/4,\rho_{j+1},...,\rho_n),&j\in[1,n].
\end{cases}
\end{aligned}\label{subdivision:rules}\end{equation}
\end{definition}

Throughout the rest of this section, we consider two functions
\[C, E:\aleph\to\R_{>0}\]
that assign a number to every admissible discretization,
and the change 
\begin{align*}
&\Delta  C({\bm{\delta}};j):= C(\psi[{\bm{\delta}};j])- C({\bm{\delta}})
\quad\forall\,n\in\N_1,\ \forall\,{\bm{\delta}}\in\aleph_n,\ \forall\,j\in[0,n],\\
&\Delta  E({\bm{\delta}};j):= E(\psi[{\bm{\delta}};j])- E({\bm{\delta}})
\quad\forall\,n\in\N_1,\ \forall\,{\bm{\delta}}\in\aleph_n,\ \forall\,j\in[0,n]
\end{align*}
in these functions caused by the subdivision rule $\psi$.
We assume that these functions have the following properties:

\begin{enumerate}[label=\textbf{P\arabic*}]

\item \label{property:rhotozeroerr} 
There exists a function $\omega_1\in C^0(\R_{>0},\R_{>0})$ such that 
$\lim_{s\to 0}\omega_1(s)=0$ and 
$E({\bm{\delta}})\le\omega_1(\|\bm{\rho}\|_\infty)$ holds for all $n\in\N_1$ 
and ${\bm{\delta}}
\in\aleph_n$.

\item \label{property:ErrDecreases2}
There exists a strictly increasing function $\omega_2\in C^0(\R_+,\R_+)$ such that
$\omega_2(0)=0$ and $\Delta E({\bm{\delta}};j)\le-\omega_2(\rho_j)$ holds for all 
$n\in\N_1$, all ${\bm{\delta}}
\in\aleph_n$ and all $j\in[0,n]$.

\item \label{property:CostIncreasesOnMin}
We have $\Delta C({\bm{\delta}};\arg\min_i\rho_i)>0$ for all $n\in\N_1$ 
and ${\bm{\delta}}\in\aleph_n$. 
    
\item \label{property:CostFracNotZero}
There exists $\omega_4\in C^0(\R_{>0},\R_{>0})$ such that
\[\Delta C({\bm{\delta}};j)\le\omega_4(\rho_j)\Delta C({\bm{\delta}};\arg\min_i\rho_i)\]
holds for all $n\in\N_1$, all ${\bm{\delta}}\in\aleph_n$ and all $j\in[0,n]$.
\end{enumerate}

\begin{remark}
Though it is somewhat counterintuitive, examples show that the differences 
$\Delta C({\bm{\delta}};j)$ can be negative.
For this reason, property \ref{property:CostFracNotZero} does not imply
\ref{property:CostIncreasesOnMin}.
\end{remark}

The following lemma is derived from \eqref{subdivision:rule:full} by a straightforward induction.

\begin{lemma}\label{simple:facts}
Let $N\in\N_0$, and let the sequences 
$({\bm{\delta}}^{(m)})_{m=0}^N$, 
$(k_m)_{m=0}^N$, and $(n_m)_{m=0}^N$
be generated by Algorithm \ref{Alg:RefineDisc} from  
${\bm{\delta}}^{(0)}\in\aleph_{n_0}$.
Then ${\bm{\delta}}^{(m)}\in\aleph_{n_m}$ for all $m\in[0,N]$. 
\end{lemma}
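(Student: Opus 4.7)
The plan is a straightforward induction on $m$. The base case $m=0$ holds by hypothesis. For the inductive step, assume ${\bm{\delta}}^{(m)}=(\bm{h},\bm{t},\bm{\rho})\in\aleph_{n_m}$ and let $j=k_m$. The algorithm sets ${\bm{\delta}}^{(m+1)}:=\psi[{\bm{\delta}}^{(m)};j]$, with $n_{m+1}=n_m+1$ when $j\in[1,n_m]$ and $n_{m+1}=n_m$ when $j=0$. It remains to check that each of the four conditions defining $\aleph_{n_{m+1}}$ is inherited by ${\bm{\delta}}^{(m+1)}$.

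First I would verify condition \eqref{eq:htreqs}: splitting $h_j$ into two copies of $h_j/2$ preserves the sum of entries in $\bm{h}$ and hence the terminal time, while inserting $t_j-h_j/2$ into the cumulative sum vector gives exactly $\Sigma_+\psi_h[\bm{h};j]$; for $j=0$ the vectors $\bm{h}$ and $\bm{t}$ are unchanged. Condition \eqref{eq:htbasics} is immediate: the subdivision rule only replaces entries of $\bm{h}$ and $\bm{\rho}$ by smaller copies (or leaves them alone), so the $\ell^\infty$-norms do not increase.

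Next I would check \eqref{eq:rhofracs}. After subdivision, the entries of $\psi_\rho[\bm{\rho};j]$ consist of the old entries $\rho_0,\ldots,\rho_n$ (possibly duplicated) together with one entry of the form $\rho_i/4$ for some $i$. Since $\rho_i/\rho_k\in\{4^\ell:\ell\in\Z\}$ already holds by hypothesis, dividing by $4$ only changes the exponent by one, so all pairwise ratios of the new entries still lie in $\{4^\ell:\ell\in\Z\}$. Finally, for condition \eqref{eq:coupling}, note that when $j=0$ only $\rho_0$ is modified and the coupling constraint does not involve $\rho_0$, so it is trivially preserved. For $j\in[1,n_m]$, the only altered pairs in the coupling relation are at the new indices corresponding to the split of $h_j$; since $\rho_j=2LP(h_j)^2$, we compute
\[2LP\bigl(h_j/2\bigr)^2=\tfrac{1}{4}\cdot 2LP\,h_j^2=\rho_j/4,\]
which matches both new $\rho$-entries produced by $\psi_\rho$. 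The remaining indices are unaffected.

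The only subtlety worth highlighting is that the coupling \eqref{eq:coupling} is specifically engineered so that the factors of $2$ in $\psi_h$ and the factor of $4$ in $\psi_\rho$ are compatible through the square relation — this is the one place a careless definition of the subdivision rule could have broken invariance. All other verifications are essentially bookkeeping, so I expect no real obstacle beyond presenting the four checks clearly.
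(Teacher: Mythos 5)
Your proof is correct and is exactly the ``straightforward induction'' that the paper alludes to without writing out: the paper offers no further detail, and your case-by-case verification of conditions \eqref{eq:htreqs}--\eqref{eq:coupling} under the subdivision rule \eqref{subdivision:rules}, including the key observation that the factor $1/4$ in $\psi_\rho$ matches $2LP(h_j/2)^2=\rho_j/4$, is precisely what is needed.
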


If the spatial discretizations generated by Algorithm \ref{Alg:RefineDisc} 
do not become uniformly arbitrarily fine, then there exists a particular
node at which the discretization eventually remains constant.

\begin{lemma} \label{prop:Undivided}
Assume that Algorithm \ref{Alg:RefineDisc} does not terminate, and that the
sequences 
$({\bm{\delta}}^{(m)})_{m=0}^\infty$, $(k_m)_{m=0}^\infty$, and $(n_m)_{m=0}^\infty$ with
${\bm{\delta}}^{(m)}=(\bm{h}^{(m)},\bm{t}^{(m)},\bm{\rho}^{(m)})$ 
it generates satisfy 
\begin{equation}\label{h:not:to:zero}
\lim_{m\to\infty}\|\bm{\rho}^{(m)}\|_\infty\ne 0.
\end{equation}
Then there exist $m_0\in\N$ and $j_{m_0}\in[0,n_{m_0}]$ such that
for all $m\in[m_0,\infty)$, there exists $j_m\in[0,n_m]$ with
$t_{j_m}^{(m)}=t_{j_{m_0}}^{(m_0)}$ and $\rho_{j_m}^{(m)}=\rho_{j_{m_0}}^{(m_0)}$.
\end{lemma}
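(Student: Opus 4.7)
The plan is to track the running maximum $\|\bm{\rho}^{(m)}\|_\infty$, show that it stabilizes at some $L_0>0$, and then argue that at least one node carrying this maximum value is never refined after a certain step.

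First, I would observe that the subdivision rule \eqref{subdivision:rules} only ever replaces an entry $\rho_{k_m}$ of $\bm{\rho}$ by one or two copies of $\rho_{k_m}/4$ and leaves all other entries unchanged, so $\|\bm{\rho}^{(m)}\|_\infty$ is non-increasing in $m$. By induction, based on \eqref{eq:rhofracs} for the initial discretization and on the subdivision rule itself, every entry of $\bm{\rho}^{(m)}$ lies in the set $S:=\{4^{\ell}\rho_0^{(0)}:\ell\in\Z\}$, which accumulates only at $0$. Combined with hypothesis \eqref{h:not:to:zero}, a non-increasing sequence in $S$ with positive limit must eventually equal that limit. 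Hence there exist $M_0\in\N$ and $L_0>0$ with $\|\bm{\rho}^{(m)}\|_\infty=L_0$ for all $m\ge M_0$.

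Next, I would consider $N_m:=\#\{j\in[0,n_m]:\rho_j^{(m)}=L_0\}$ for $m\ge M_0$. From \eqref{subdivision:rules} one reads off directly that refinement at index $k_m$ leaves the pair $(t_j^{(m)},\rho_j^{(m)})$ intact for every $j\ne k_m$ (its position in the vector may shift by one, but its time and mesh value are unchanged), and replaces $\rho_{k_m}^{(m)}$ by values of $\rho_{k_m}^{(m)}/4<L_0$. Therefore $N_{m+1}=N_m-1$ whenever $\rho_{k_m}^{(m)}=L_0$, and $N_{m+1}=N_m$ otherwise. Since $N_m\ge 1$ always, the integer-valued non-increasing sequence $(N_m)_{m\ge M_0}$ is eventually constant; fix $M_1\ge M_0$ at which it stabilizes. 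For every $m\ge M_1$ no max-node is refined.

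To conclude, I would fix any index $j_{M_1}\in[0,n_{M_1}]$ with $\rho_{j_{M_1}}^{(M_1)}=L_0$ and verify by induction on $m\ge M_1$ that an index $j_m$ exists with $t_{j_m}^{(m)}=t_{j_{M_1}}^{(M_1)}$ and $\rho_{j_m}^{(m)}=L_0$; the inductive step just shifts $j_m$ by one if $k_m<j_m$ and keeps it otherwise. Setting $m_0:=M_1$ then yields the claim. The only real subtlety is the bookkeeping of index shifts introduced by inserting a new node into the discretization, but this is routine. The conceptual engine is the finite-decrease argument for $N_m$, combined with the elementary observation that max-carrying nodes survive any refinement not directly targeting them.
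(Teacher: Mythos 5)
Your argument is correct, and its first half --- monotonicity of $\|\bm{\rho}^{(m)}\|_\infty$ under \eqref{subdivision:rules} plus the quantization \eqref{eq:rhofracs}, which forces the maximum to stabilize at some $L_0>0$ under hypothesis \eqref{h:not:to:zero} --- coincides exactly with the paper's. You diverge in how you extract a persistent node. The paper forms the sets $S_m$ of temporal nodes carrying the maximal mesh size, observes that they are nonempty, closed and nested subsets of the compact interval $[0,T]$, and invokes the Cantor intersection theorem to obtain a common time. You instead count the max-carrying nodes: $N_m$ is integer-valued, non-increasing (refining a max node strictly decreases it, refining any other node preserves it), and bounded below by $1$ because the maximum is frozen at $L_0$, hence eventually constant; after that point no max node is ever refined, and one of them can be tracked explicitly through the index shifts of \eqref{subdivision:rules}. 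The two routes are essentially equivalent --- since each $S_m$ is finite, the compactness appeal in the paper reduces to the same stabilization-of-a-finite-count fact --- but yours is more elementary and makes the surviving node constructive rather than produced as a point of a nonempty intersection. One cosmetic slip in the final bookkeeping: for $k_m=0$ no node is inserted ($n_{m+1}=n_m$), so the index shift in your closing induction occurs only when $1\le k_m<j_m$; this does not affect the validity of the argument.
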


\begin{proof}
By statement \eqref{subdivision:rules}, the sequence $(\|\bm{\rho}^{(m)}\|_\infty)_{m=0}^\infty$ 
is a monotone decreasing sequence of positive numbers.
Hence there exists $s\in\R_+$ with $\lim_{m\to\infty}\|\bm{\rho}^{(m)}\|_\infty=s$.
By statement \eqref{h:not:to:zero}, we have $s>0$, and it follows from statement \eqref{eq:rhofracs}
that  there exists $m_0\in\N$ with 
\begin{equation}\label{constant:max}
\|\bm{\rho}^{(m)}\|_\infty=s\quad\forall\,m\in[m_0,\infty).
\end{equation}
Hence the sets
\[S_m:=\{t_j^{(m)}:j\in[0,n_m],\ \rho_j^{(m)}=s\},\quad m\in[m_0,\infty),\] 
are nonempty and closed subsets of the compact set $[0,T]$.
By statements \eqref{subdivision:rules} and \eqref{constant:max}, we also have
$S_{m+1}\subset S_m$ for all $m\in[m_0,\infty)$, so by \cite[Theorem 26.9]{Munkres}, 
we have $\cap_{m\in[m_0,\infty)}S_m\neq\emptyset$, which yields the desired result.
\end{proof}

If Algorithm \ref{Alg:RefineDisc} does not terminate, then the event that the currently 
finest spatial discretization is refined in line \ref{alg:subdivision:step} occurs infinitely often.

\begin{lemma}\label{prop:SubseqOfMinRhos}
If Algorithm \ref{Alg:RefineDisc} does not terminate in finite time and generates 
sequences 
$({\bm{\delta}}^{(m)})_{m=0}^\infty$,
$(k_m)_{m=0}^\infty$, and $(n_m)_{m=0}^\infty$ with
${\bm{\delta}}^{(m)}=(\bm{h}^{(m)},\bm{t}^{(m)},\bm{\rho}^{(m)})$, 
then the set 
$\N':=\{m\in\N_1:\rho_{k_m}^{(m)}=\min_{j\in[0,n_m]}\rho_j^{(m)}\}$ is infinite.
\end{lemma}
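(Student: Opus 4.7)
The plan is to argue by contradiction: assume $\N'$ is finite and fix $m_0 \in \N_1$ such that for every $m \ge m_0$, the refinement index $k_m$ does not attain the minimum of $\bm{\rho}^{(m)}$. The strategy is first to exploit the power-of-$4$ constraint \eqref{eq:rhofracs} to show that the smallest entry of $\bm{\rho}^{(m)}$ is frozen for $m \ge m_0$, and then to invoke property \ref{property:ErrDecreases2} together with the positivity of $E$ to derive a contradiction.

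For the first step, set $s := \min_{j \in [0,n_{m_0}]} \rho_j^{(m_0)}$, and suppose inductively that $\min_{j \in [0,n_m]} \rho_j^{(m)} = s$ for some $m \ge m_0$. Since $k_m$ is not a minimizing index, we have $\rho_{k_m}^{(m)} > s$, and constraint \eqref{eq:rhofracs} forces $\rho_{k_m}^{(m)}/s \in \{4^\ell : \ell \ge 1\}$, hence $\rho_{k_m}^{(m)} \ge 4s$. The subdivision rules \eqref{subdivision:rules} replace $\rho_{k_m}^{(m)}$ by one or two copies of $\rho_{k_m}^{(m)}/4 \ge s$ and leave all other entries untouched, so the minimum of $\bm{\rho}^{(m+1)}$ is still $s$. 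Induction then yields $\min_{j \in [0,n_m]} \rho_j^{(m)} = s$ for all $m \ge m_0$.

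For the second step, property \ref{property:ErrDecreases2} and the monotonicity of $\omega_2$ give
\begin{equation*}
E({\bm{\delta}}^{(m+1)}) - E({\bm{\delta}}^{(m)}) = \Delta E({\bm{\delta}}^{(m)}; k_m) \le -\omega_2(\rho_{k_m}^{(m)}) \le -\omega_2(s)
\end{equation*}
for every $m \ge m_0$, and telescoping yields $E({\bm{\delta}}^{(m)}) \le E({\bm{\delta}}^{(m_0)}) - (m - m_0)\,\omega_2(s)$. Since $\omega_2$ is strictly increasing with $\omega_2(0) = 0$ and $s > 0$, we have $\omega_2(s) > 0$, so the right-hand side tends to $-\infty$ as $m \to \infty$. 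This contradicts the fact that $E$ is $\R_{>0}$-valued, so $\N'$ must be infinite.

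The main subtlety, modest though it is, lies in the first step: one must recognize that the discreteness built into \eqref{eq:rhofracs} promotes the strict inequality $\rho_{k_m}^{(m)} > \min_j \rho_j^{(m)}$ into a quantitative factor-$4$ gap, which exactly compensates the single quartering performed by $\psi_\rho$ and thereby pins the global minimum of $\bm{\rho}^{(m)}$ at $s$. Once this is in hand, property \ref{property:ErrDecreases2} does the remaining work.
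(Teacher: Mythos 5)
Your proof is correct, and its first half (the induction showing that the minimum of $\bm{\rho}^{(m)}$ is frozen at $s$ once $k_m$ stops attaining it, via the factor-$4$ gap forced by \eqref{eq:rhofracs}) coincides with the paper's argument, which establishes exactly the same statement as \eqref{conv:eq:rhostalls}. The contradiction step, however, is genuinely different. The paper stays entirely combinatorial: it introduces the integer-valued potential $\beta(m)=\log_4(\rho_0^{(m)}/\tilde{\rho})+\sum_{j}(2^{\log_4(\rho_j^{(m)}/\tilde{\rho})}-1)$, checks that $\beta(m+1)=\beta(m)-1$ under the subdivision rule, and concludes that after $\beta(\tilde{m})$ further steps every entry of $\bm{\rho}$ must equal $\tilde{\rho}$, whence $k_{\hat{m}}$ is forced to be a minimizer — contradicting the choice of $\tilde{m}$ without ever invoking $E$. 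You instead appeal to property \ref{property:ErrDecreases2} and the positivity of $E$: since $\rho_{k_m}^{(m)}>s$ for all $m\ge m_0$, each step decreases $E$ by at least $\omega_2(s)>0$, and telescoping drives $E$ below zero. Both arguments are valid in context (P1--P4 are standing assumptions for the section, and Lemma \ref{simple:facts} guarantees $\bm{\delta}^{(m)}\in\aleph_{n_m}$ so that \eqref{eq:rhofracs} and $E(\bm{\delta}^{(m)})>0$ apply), and yours is shorter; what the paper's version buys is independence from the error functional — the lemma then holds for any selection rule $(k_m)$ and any $E$ whatsoever, and it also avoids duplicating the telescoping bound \eqref{wasweissich}, which the paper reserves for the proof of Theorem \ref{Thm:GeneralCvgc}.
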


\begin{proof}
Assume that $\N'$ is finite, and take $\tilde{m}:=\max(\N')+1$, so that
\begin{equation}\label{conv:eq:rhoequal}
\rho_{k_m}^{(m)}>\min_{j\in[0,n_m]}\rho_j^{(m)}\quad\forall\,m\in[\tilde{m},\infty).
\end{equation}
We assert by induction that this implies
\begin{equation}\label{conv:eq:rhostalls}
\tilde{\rho}:=\min_{j\in[0,n_{\tilde{m}}]}\rho_j^{(\tilde{m})}
=\min_{j\in[0,n_m]}\rho_j^{(m)}
\quad\forall\, m\in[\tilde{m},\infty).
\end{equation}
This is clearly true for $m=\tilde{m}$. 
If \eqref{conv:eq:rhostalls} holds for some $m\in[\tilde{m},\infty)$, 
then by \eqref{eq:rhofracs} and \eqref{conv:eq:rhoequal}, we have 
$\rho_{k_m}^{(m)}\ge 4\min_{j\in[0,n_m]}\rho_j^{(m)}$,
so by statement \eqref{subdivision:rules} we find that \eqref{conv:eq:rhostalls} 
holds with $m+1$ in lieu of $m$.

To derive a contradiction, we introduce the auxiliary function 
\[\beta:[\tilde{m},\infty)\to\R,\quad
\beta(m):=\log_4(\tfrac{\rho_0^{(m)}}{\tilde{\rho}})
+\sum_{j=1}^{n_m}(2^{\log_4(\rho_j^{(m)}/\tilde{\rho})}-1).\]
By \eqref{conv:eq:rhostalls} we have $\rho_j^{(m)}\ge\tilde{\rho}$ 
and hence $\log_4(\rho_j^{(m)}/\tilde{\rho})\ge0$
for all $m\in[\tilde{m},\infty)$ and $j\in[0,n_m]$.
Together with \eqref{eq:rhofracs}, we obtain that
\begin{equation}\label{lemma:aux}
\log_4(\rho_j^{(m)}/\tilde{\rho})\in\N_0
\quad\forall\,m\in[\tilde{m},\infty),\ \forall\,j\in[0,n_m],
\end{equation}
and hence that $\beta(m)\in\N_0$ for all $m\in[\tilde{m},\infty)$.
Using \eqref{subdivision:rules}, it is easy to check that
\[\beta(m+1)=\beta(m)-1,\quad\forall\,m\in[\tilde{m},\infty),\]    
which implies that for $\hat{m}:=\tilde{m}+\beta(\tilde{m})$, 
we have $\beta(\hat{m})=0$.
By \eqref{lemma:aux}, this forces
$\rho^{(\hat{m})}=\tilde{\rho}\mathbbm{1}$.
In particular, we have
\[\rho^{(\hat{m})}_{k_{\hat{m}}}=\tilde{\rho}=\min_{j\in[0,n_{\hat{m}}]}\rho_j^{(\hat{m})},\]
which implies $\hat{m}\in\N'$.
Since $\tilde{m}\le\hat{m}$, this contradicts the definition of $\tilde{m}$.
\end{proof}

Now we prove that Algorithm \ref{Alg:RefineDisc} terminates in finite time.

\begin{theorem}\label{Thm:GeneralCvgc}
If the functions $C$ and $E$ satisfy properties 
\ref{property:rhotozeroerr}--\ref{property:CostFracNotZero},
then, for any 
${\bm{\delta}}^{(0)}\in\aleph_{n_0}$
and $\eps>0$, 
Algorithm \ref{Alg:RefineDisc} terminates after a finite number $m\in\N$ 
of iterations and returns a 
${\bm{\delta}}^{(m)}\in \aleph_{n_m}$
with 
$E({\bm{\delta}}^{(m)})\le\eps$.
\end{theorem}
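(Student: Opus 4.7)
I argue by contradiction: suppose Algorithm \ref{Alg:RefineDisc} produces an infinite sequence $({\bm{\delta}}^{(m)})_{m=0}^\infty$, so that $E({\bm{\delta}}^{(m)})>\eps$ for every $m\in\N_0$. The subdivision rule \eqref{subdivision:rules} only ever shrinks $\|\bm{\rho}\|_\infty$, hence the monotone sequence $(\|\bm{\rho}^{(m)}\|_\infty)_{m=0}^\infty$ has a limit $s\ge 0$, and I split into the cases $s=0$ and $s>0$.

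If $s=0$, property \ref{property:rhotozeroerr} together with the continuity of $\omega_1$ at the origin yields $E({\bm{\delta}}^{(m)})\le\omega_1(\|\bm{\rho}^{(m)}\|_\infty)\to 0$, contradicting $E({\bm{\delta}}^{(m)})>\eps>0$.

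If $s>0$, Lemma \ref{prop:Undivided} provides $m_0\in\N$ and indices $(j_m)_{m\ge m_0}$ with $\rho_{j_m}^{(m)}=s$ throughout, while Lemma \ref{prop:SubseqOfMinRhos} supplies an infinite set $\N'$ of iteration indices $m$ at which $k_m$ is a minimizer of $j\mapsto\rho_j^{(m)}$. The crucial step is to quantify the error decrease at every $m\in\N'\cap[m_0,\infty)$: property \ref{property:ErrDecreases2} at $j_m$ gives $-\Delta E({\bm{\delta}}^{(m)};j_m)\ge\omega_2(s)$; property \ref{property:CostIncreasesOnMin} ensures $\Delta C({\bm{\delta}}^{(m)};k_m)>0$; and property \ref{property:CostFracNotZero} at $j_m$ reads $\Delta C({\bm{\delta}}^{(m)};j_m)\le\omega_4(s)\,\Delta C({\bm{\delta}}^{(m)};k_m)$. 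Combining these three bounds with the greedy choice of $k_m$ as a maximizer of $j\mapsto-\Delta E({\bm{\delta}}^{(m)};j)/\Delta C({\bm{\delta}}^{(m)};j)$ and using $j_m$ as a test index, I obtain
\[
-\Delta E({\bm{\delta}}^{(m)};k_m)\ge\frac{-\Delta E({\bm{\delta}}^{(m)};j_m)}{\Delta C({\bm{\delta}}^{(m)};j_m)}\,\Delta C({\bm{\delta}}^{(m)};k_m)\ge\frac{\omega_2(s)}{\omega_4(s)}>0
\]
for every $m\in\N'\cap[m_0,\infty)$.

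To finish, I telescope: property \ref{property:ErrDecreases2} makes each term $-\Delta E({\bm{\delta}}^{(m)};k_m)$ nonnegative, and infinitely many of them are bounded below by the positive constant $\omega_2(s)/\omega_4(s)$, so
\[
E({\bm{\delta}}^{(M)})=E({\bm{\delta}}^{(0)})+\sum_{m=0}^{M-1}\Delta E({\bm{\delta}}^{(m)};k_m)\to-\infty\quad\text{as}\quad M\to\infty,
\]
contradicting $E({\bm{\delta}}^{(M)})>\eps>0$. The principal obstacle is the orchestration of the greedy selection rule with properties \ref{property:ErrDecreases2}--\ref{property:CostFracNotZero} in the case $s>0$: one needs a uniform positive lower bound on $-\Delta E({\bm{\delta}}^{(m)};k_m)$ along an infinite subsequence, and this bound must be produced by comparing the picked index $k_m$ against the persistently coarse test index $j_m$ furnished by Lemma \ref{prop:Undivided}.
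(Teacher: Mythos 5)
Your argument is, in substance, the paper's own proof: the same contradiction setup, the same dichotomy on $s=\lim_m\|\bm{\rho}^{(m)}\|_\infty$, the same appeal to \ref{property:rhotozeroerr} when $s=0$, and, when $s>0$, the same combination of Lemmas \ref{prop:Undivided} and \ref{prop:SubseqOfMinRhos} with \ref{property:ErrDecreases2}--\ref{property:CostFracNotZero} and the greedy ratio comparison against the persistent index. The only difference is the endgame: the paper first deduces $\Delta E({\bm{\delta}}^{(m)};k_m)\to0$ from the telescoping bound and then contradicts the uniform lower bound $\Delta C({\bm{\delta}}^{(m)};k_m)/\Delta C({\bm{\delta}}^{(m)};i_m)\ge\omega_4(\rho^{(m_0)}_{i_{m_0}})^{-1}$, whereas you extract the uniform positive lower bound $-\Delta E({\bm{\delta}}^{(m)};k_m)\ge\omega_2(s)/\omega_4(s)$ on an infinite set and let the telescoping sum diverge. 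These are equivalent rearrangements of the same estimate.

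There is, however, one genuine omission. Your chain of inequalities presumes both that $k_m$ was selected in line \ref{alg:subdivision:first:choice} of Algorithm \ref{Alg:RefineDisc} (otherwise $k_m$ is not a minimizer of $j\mapsto\Delta E/\Delta C$ and the comparison with $j_m$ is unavailable) and that $\Delta C({\bm{\delta}}^{(m)};j_m)>0$ (otherwise the division and the direction of your inequalities are not justified). Neither is automatic: as the Remark following \ref{property:CostFracNotZero} stresses, $\Delta C({\bm{\delta}};j)$ can be negative at indices other than the $\rho$-minimizer, and \ref{property:CostFracNotZero} does not imply \ref{property:CostIncreasesOnMin}. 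You must rule out the else-branch explicitly: if $\Delta C({\bm{\delta}}^{(m)};j)\le0$ for some $j$ and some $m\in\N'\cap[m_0,\infty)$, then line \ref{alg:subdivision:alternative} would set $k_m=\argmin_j\Delta C({\bm{\delta}}^{(m)};j)$, forcing $\Delta C({\bm{\delta}}^{(m)};k_m)\le0$ and contradicting property \ref{property:CostIncreasesOnMin}, which applies to $k_m$ precisely because $m\in\N'$ makes $k_m$ a minimizer of $i\mapsto\rho_i^{(m)}$. Hence all the increments $\Delta C({\bm{\delta}}^{(m)};j)$ are positive at these iterations, line \ref{alg:subdivision:first:choice} is executed, and your estimate goes through. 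With this patch (and a reference to Lemma \ref{simple:facts} so that \ref{property:rhotozeroerr}--\ref{property:CostFracNotZero} may legitimately be invoked at every iterate and the returned discretization lies in $\aleph_{n_m}$), the proof is complete.
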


\begin{proof}
Assume that Algorithm \ref{Alg:RefineDisc} does not terminate.
Then it generates sequences 
$({\bm{\delta}}^{(m)})_{m=0}^\infty$, $(k_m)_{m=0}^\infty$ and $(n_m)_{m=0}^\infty$ with
${\bm{\delta}}^{(m)}=(\bm{h}^{(m)},\bm{t}^{(m)},\bm{\rho}^{(m)})$ for all $m\in\N$
and
\begin{equation}\label{conv:eq:errlb}
E({\bm{\delta}}^{(m)})>\eps\quad\forall\,m\in\N.
\end{equation}
In the following, we show that
\begin{equation}\label{conv:eq:rhotozero}
\lim_{m\to\infty}\|\bm{\rho}^{(m)}\|_\infty=0,
\end{equation}
which, in conjunction with property
\ref{property:rhotozeroerr}, 
contradicts \eqref{conv:eq:errlb}. This, combined with Lemma \ref{simple:facts}, completes the proof.

\medskip

We obtain from property \ref{property:ErrDecreases2} that
\begin{equation}\label{DE:negative}
\Delta E({\bm{\delta}}^{(m)};j)\le-\omega_2(\rho_{j}^{(m)})<0,
\quad\forall\,m\in\N_1,\ \forall\,j\in[0,n_m],
\end{equation} 
and by construction, we have
\begin{equation}\label{wasweissich}
-\sum_{j=0}^{m-1}\Delta E({\bm{\delta}}^{(j)};k_j)
=E({\bm{\delta}}^{(0)})-E({\bm{\delta}}^{(m)})\le E({\bm{\delta}}^{(0)}),\quad\forall\, m\in\N_1.
\end{equation}
Combining \eqref{DE:negative} and \eqref{wasweissich} yields
\begin{equation}\label{conv:eq:tozeroes:a}
\lim_{m\to\infty}\Delta E({\bm{\delta}}^{(m)};k_m)=0.
\end{equation}

If statement \eqref{conv:eq:rhotozero} is false, then by Lemma \ref{prop:Undivided}, 
there exist $m_0\in\N_1$ and $i_{m_0}\in[0,n_{m_0}]$ such that, for every 
$m\in[m_0,\infty)$, there exists $i_m\in[0,n_m]$ with 
\begin{equation}\label{conv:eq:rhoconstan}
\rho_{i_m}^{(m)}=\rho_{i_{m_0}}^{(m_0)}. 
\end{equation}
By Lemma \ref{prop:SubseqOfMinRhos}, the set 
\[\N'=\{m\in\N_1:\rho_{k_m}^{(m)}=\min_{j\in[0,n_m]}\rho_j^{(m)}\}\]
is infinite, and hence $\N'':=[m_0,\infty)\cap\N'$ is infinite as well.
By property \ref{property:CostIncreasesOnMin}, we have
\begin{equation}
\Delta C({\bm{\delta}}^{(m)};k_m)>0 
\quad\forall\, m\in\N''.\label{conv:eq:DeltaCkPositive}
\end{equation}
If 
$\Delta C({\bm{\delta}}^{(m)};j)\le0$ for some $m\in\N''$ and $j\in[0,n_m]$, 
then the condition in line \ref{alg:subdivision:condition} of Algorithm \ref{Alg:RefineDisc} 
is false, and so  
line \ref{alg:subdivision:alternative} 
yields the contradiction 
$\Delta C({\bm{\delta}}^{(m)};k_m)\le0$.
Hence
\[\Delta C({\bm{\delta}}^{(m)};j)>0
\quad\forall\,m\in\N'',\ \forall\,j\in[0,n_m],
\]
so line \ref{alg:subdivision:first:choice} is executed in iteration 
$m$ for all $m\in\N''$, and thus we have
\begin{equation*}
\tfrac{\Delta E({\bm{\delta}}^{(m)};k_m)}
{\Delta C({\bm{\delta}}^{(m)};k_m)}
\le\tfrac{\Delta E({\bm{\delta}}^{(m)};i_m)}
{\Delta C({\bm{\delta}}^{(m)};i_m)}
\quad\forall\,m\in\N''.
\end{equation*}
By \eqref{DE:negative} and \eqref{conv:eq:DeltaCkPositive}, 
by \eqref{DE:negative} and property \ref{property:ErrDecreases2},
by \eqref{conv:eq:rhoconstan}, and by \eqref{conv:eq:tozeroes:a}, 
we have
\begin{align*}
&\lim_{\N''\ni m\to\infty}
\tfrac{\Delta C({\bm{\delta}}^{(m)};k_m)}
{\Delta C({\bm{\delta}}^{(m)};i_m)}
\le\lim_{\N''\ni m\to\infty}
\tfrac{\Delta E({\bm{\delta}}^{(m)};k_m)}
{\Delta E({\bm{\delta}}^{(m)};i_m)}\\
&\le\lim_{\N''\ni m\to\infty}
\tfrac{\Delta E({\bm{\delta}}^{(m)};k_m)}
{-\omega_2(\rho_{i_m}^{(m)})}    
=\lim_{\N''\ni m\to\infty}
\tfrac{\Delta E({\bm{\delta}}^{(m)};k_m)}
{-\omega_2(\rho_{i_{m_0}}^{(m_0)})}=0.
\end{align*}
However, in view of the definition of $\N''$,
property \ref{property:CostFracNotZero} and \eqref{conv:eq:rhoconstan} provide
\begin{equation*}
\tfrac{\Delta C({\bm{\delta}}^{(m)};k_m)}
{\Delta C({\bm{\delta}}^{(m)};i_m)}
\ge\omega_4(\rho^{(m)}_{i_m})^{-1}
=\omega_4(\rho^{(m_0)}_{i_{m_0}})^{-1}>0
\quad\forall\,m\in\N'',
\end{equation*}
contradicting the above statement.
\end{proof}

\section{Efficient discretizations for boundary tracking}\label{sec:main}

We wish to measure the quality of a discretization 
\[{\bm{\delta}}=(\bm{h},\bm{t},\bm{\rho})\in\R^n_{>0}\times\R_+^{0,n}\times\R_{>0}^{0,n}\]
in terms of the a priori error of the corresponding discrete reachable sets 
and the cost of their computation.
This will allow us to select a discretization with a high
benefit-to-cost ratio.
To this end, we split the a priori error bound 
\begin{align}
E({\bm{\delta}})&=\sum_{k=0}^n\mc{E}_k({\bm{\delta}}),\label{eq:errdef}
\end{align}
from Theorem \ref{err:thm:NoE2} for the reachable sets
computed in recursion \eqref{recursion:in:bd} into the contribution
$\mc{E}_0({\bm{\delta}}):=\frac{\rho_0}{2}e^{LT}$ 
from the discretization of the initial set $X_0$ and the contributions
\[\mc{E}_k({\bm{\delta}}):=e^{L(T-t_k)}\Big[(e^{Lh_k}-1)
\big(Ph_k+\tfrac{(1+Lh_k)^2}{Lh_k}\tfrac{\rho_{k-1}}{2}\big)
+\chi_k({\bm{\delta}})\Big],\ k\in(0,n],\]
from the discretization of the dynamics on each subinterval
$(t_{k-1},t_k]$ for $k\in(0,n]$, where we use the notation
\[\chi_k({\bm{\delta}}):=\begin{cases}
\tfrac{\rho_k}{2},&\rho_{k-1}<\rho_k,\\0,&\text{otherwise}.\end{cases}\]
Since there is no straightforward a priori formula 
for the computational cost of recursion \eqref{recursion:in:bd} available, 
we derive a heuristic estimator based on a posteriori knowledge from previously 
performed computations in Section \ref{sec:estimating:cost}.
We verify in Section \ref{sec:verification} that this cost estimator 
and the a priori error estimate \eqref{eq:errdef} satisfy properties P1 to P4,
which means that it is safe to carry out the subdivision scheme from Section 
\ref{sec:abstract:scheme} with these particular inputs.

\subsection{Estimating the computational cost}\label{sec:estimating:cost}

We first collect some preliminaries:
\begin{itemize}
\item [i)] Recall that $d$ is the dimension of the state space, and let
\begin{equation*}
d_x:=\max_{t\in[0,T]}\dim\mc{R}^F(t)
\quad\text{and}\quad
d_u:=\max_{t\in[0,T]}\max_{x\in\mc{R}^F(t)}\dim F(x).
\end{equation*}
Then the numbers
\begin{equation*}
 d_R:=\min(d-1,d_x)
 \quad\text{and}\quad
 d_F:=\min(d-1,d_u)
\end{equation*}
are a good guess for the dimension of the boundary of the reachable sets
and the boundary of the right-hand side $F$, see Figure \ref{fig:bdrydims} 
for an illustration.
From here forward, we assume that $d_R\ge 1$ to rule out
pathological situations. 
\item [ii)] We express the average thickness of the generalized annuli computed in the evaluation of the operator $\Psi_{h,\rho,\rho'}$ 
in line \ref{use:theorem:in:alg:1} of Algorithm \ref{Alg:BdryEulrStep} 
and \eqref{where:sigma:comes:from}, respectively, in terms of the function 
$\sigma:(0,\tfrac{1}{4L}]\times(0,\tfrac{P}{8L}]\to\R_{>0}$ given by
\begin{equation}\label{eq:sigmadef}
\sigma(h,\rho)
:=2\alpha(h,\rho)+\tfrac{1}{4}\kappa(h,\rho)
=\tfrac{\rho}{4}(1+Lh)\left(\tfrac{1+Lh}{1-Lh}+6\right).
\end{equation}
Figure \ref{fig:SigmaExplain} on page \pageref{fig:SigmaExplain} provides a visualization of these annuli. 
\end{itemize}

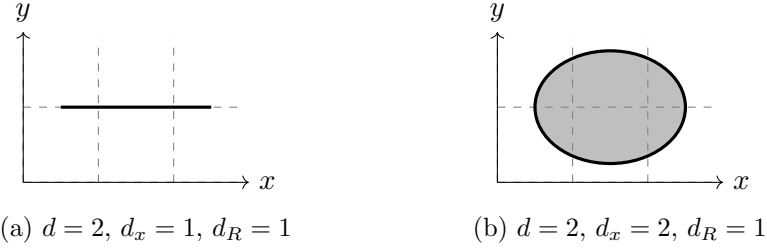
\begin{figure}[t!]\centering
\begin{subfigure}[t]{0.33\textwidth}\centering
\begin{tikzpicture}
    \draw[ color=gray, dashed] (0,0) grid (2.9,1.9);
    \draw[->] (0,0)--(3,0) node[right]{$x$};
    \draw[->] (0,0)--(0,2) node[above]{$y$};
    \draw[-, very thick] (0.5,1)--(2.5,1);
\end{tikzpicture}
\caption{$d=2$, $d_x=1$, $d_R=1$}
\end{subfigure}%
\hspace{2cm}
\begin{subfigure}[t]{0.33\textwidth}\centering
\begin{tikzpicture}
    \draw[color=gray, fill opacity=0.4, dashed] (0,0) grid (2.9,1.9);
    \draw[->] (0,0)--(3,0) node[right]{$x$};
    \draw[->] (0,0)--(0,2) node[above]{$y$};
    \draw[black, very thick, fill=gray, fill opacity=0.5](1.5,1) ellipse (1 and 0.75);
\end{tikzpicture}
\caption{$d=2$, $d_x=2$, $d_R=1$}
\end{subfigure} %
\caption{The dimension $d_R$ of the boundary of a set 
is determined by the dimension $d$ of the state space and the dimension 
$d_x$ of the set.}
\label{fig:bdrydims}
\end{figure}

Now we derive the cost estimator.
The exact cost, in terms of grid points computed, of carrying out recursion 
\eqref{recursion:in:bd} with a given discretization 
${\bm{\delta}}=(\bm{h},\bm{t},\bm{\rho})\in\R^n_{>0}\times\R_+^{0,n}\times\R_{>0}^{0,n}$ is
\begin{equation}\label{def:hat:C}\begin{aligned}
\hat{C}({\bm{\delta}})&:=\sum_{k=0}^{n-1}\hat{\mc{C}}_k({\bm{\delta}}),\\
\hat{\mc{C}}_k({\bm{\delta}})
&:=\sum_{j\in\{-1,1,2\}}
\sum_{x\in\partial_{\rho_k}^j\!\mc{R}_{\bm{\delta}}^F(k)}
\#\Phi_{h_k,\tilde{\rho}_{k+1}}^{\partial,\alpha_{k+1}}(x)\\
&\qquad\qquad+\sum_{x\in\partial_{\rho_{k}}^0\!\mc{R}_{\bm{\delta}}^F(k)}
\#\big(\Phi_{h,\tilde{\rho}_{k+1}}^{\partial,\alpha_{k+1}+\kappa_{k+1}}(x)\cap\Phi_{h,\tilde{\rho}_{k+1}}^{\alpha_{k+1}}(x)\big),
\end{aligned}\end{equation}
where we used the notation
\begin{equation*}
\alpha_k=\alpha(h_k,\rho_{k-1}),\quad 
\kappa_k=\kappa(h_k,\rho_{k-1})
\quad\text{and}\quad
\tilde{\rho}_{k+1}=\min(\rho_k,\rho_{k+1})
\end{equation*}
from \eqref{alpha} and \eqref{kappa}.
It can, however, only be determined after the reachable sets have been computed.

\medskip

For this reason, we pursue an iterative approach, see Algorithm 
\ref{Alg:IterativeMethod}, in which we alternate 
between executing recursion \eqref{recursion:in:bd} and 
refining the discretization based on information gathered in the recursion. 
Assuming that at some point of the iteration, we have determined
\begin{itemize}
\item [i)] a discretization
${\bm{\delta}}^{(\ell)}
\in\aleph_{n_\ell}$,
\item [ii)] the corresponding reachable sets 
$\mc{R}_{{\bm{\delta}}^{(\ell)}}^F(k)$, $k\in[0,n_\ell]$, from \eqref{recursion:in:bd}, and
\item [iii)] the corresponding exact complexities $\hat{\mc{C}}_k({\bm{\delta}}^{(\ell)})$
for $k\in[0,n_\ell-1]$,
\end{itemize}
we proceed as follows:
\begin{itemize}
\item [i)] First we compute the surrogate surface areas
\begin{align}
\hat{v}_{R,k}^{(\ell)}&=\tfrac14(\rho_k^{(\ell)})^{d_R}
\sum_{j=-1}^2\#\partial_{\rho_k^{(\ell)}}^j\mc{R}_{{\bm{\delta}}^{(\ell)}}^F(k),
\quad k\in[0,n_\ell],\label{def:Rvol}\\
\hat{v}_{F,k}^{(\ell)}
&=\tfrac{\min(\rho_{k}^{(\ell)},\rho_{k+1}^{(\ell)})^{d_F+1}}{(h_{k+1}^{(\ell)})^{d_F}\sigma(h_{k+1}^{(\ell)},\rho_k^{(\ell)})}
\tfrac{\hat{\mc{C}}_k({\bm{\delta}}^{(\ell)})}
{\sum_{j=-1}^2\#\partial_{\rho_k^{(\ell)}}^j\mc{R}_{{\bm{\delta}}^{(\ell)}}^F(k)},
\quad k\in[0,n_\ell-1]\label{def:Fvol}
\end{align}
based on the following heuristic: 
Equation \eqref{def:Rvol} estimates the surface area of the reachable sets
$\mc{R}_{{\bm{\delta}}^{(\ell)}}^F(k)$ by counting the average number of grid points 
in the sets $\partial_{\rho_k^{(\ell)}}^{j}\mc{R}_{{\bm{\delta}}^{(\ell)}}^F(k)$
for $j\in[-1,2]$ and multiplying this number by the surface area of one face
of a cube with diameter $\rho_k^{(\ell)}$. 
Equation \eqref{def:Fvol} estimates the average area of $\partial F(x)$
for $x\in\partial\mc{R}_{{\bm{\delta}}^{(\ell)}}^F(k)$ in a similar way, taking into 
account the scaling of $F$ with $h^{(\ell)}_{k+1}$ and the blowup size 
$\sigma(h_{k+1}^{(\ell)},\rho_k^{(\ell)})$ in evaluating 
$\Psi$ with parameters $h_{k+1}^{(\ell)}$, $\rho_{k}^{(\ell)}$ and 
$\min(\rho_{k}^{(\ell)},\rho_{k+1}^{(\ell)})$,
see \eqref{where:sigma:comes:from} and Figure \ref{fig:SigmaExplain}.

\item [ii)] Next, setting $\hat{v}_{F,n_\ell}^{(\ell)}=\hat{v}_{F,n_\ell-1}$, we interpolate the estimates 
$(t^{(\ell)}_j,\hat{v}_{R,j}^{(\ell)})_{j=0}^{n_\ell}$ and 
$(t^{(\ell)}_j,\hat{v}_{F,j}^{(\ell)})_{j=0}^{n_\ell}$  
with piecewise linear splines 
\[v_R^{(\ell)},v_F^{(\ell)}:[0,T]\to\R_{>0}\]
and form their product $v^{(\ell)}:=v^{(\ell)}_Rv^{(\ell)}_F$.

\item [iii)] Finally, for any finer discretization 
${\bm{\delta}}=(\bm{h},\bm{t},\bm{\rho})\in\R^n_{>0}\times\R_+^{0,n}\times\R_{>0}^{0,n}$,
we estimate the number of grid points to be computed in recursion 
\eqref{eq:defapproxreach} by $C(\delta;v^{(\ell)})$, where
\begin{equation}\label{def:C}\begin{aligned}
C({\bm{\delta}};v)
&=\sum_{k=0}^{n-1}{\mc{C}}_k({\bm{\delta}};v),\\
\mc{C}_k({\bm{\delta}};v)
&=\tfrac{4v(t_k)h_{k+1}^{d_F}}
{\rho_k^{d_R}\min(\rho_k,\rho_{k+1})^{d_F+1}}\sigma(h_{k+1},\rho_k),
\end{aligned}\end{equation}
which inverts the heuristic leading to the estimates
\eqref{def:Rvol} and \eqref{def:Fvol}.
\end{itemize}

\begin{algorithm}[p]
\KwIn{\begin{minipage}[t]{0.788\textwidth} 
$h,\rho,\rho'>0$ with $\rho'/\rho\in(0,1)\cup\N_1$,
$(D_0,D_1)\in\bdc_{\rho}$.
\end{minipage}}
\KwOut{\begin{minipage}[t]{0.788\textwidth}
$\Psi_{h,\rho,\rho'}(D_0,D_1)\in\bdc_{\rho'}$,
$\hat{c}_R\in\N_{1}$, $\hat{c}_F\in\N_1$
\end{minipage}}

Compute $D_{-1}$ and $D_2$ from $D_0$ and $D_1$ by neighbor search\;
$\hat{c}_R\gets\sum_{j=-1}^{j=2}\#D_j$\;
$(\alpha,\kappa,\tilde{\rho})\gets
(\alpha(h,\rho),2(1+L h)(\frac{\alpha}{1-Lh}+\rho),\min(\rho,\rho'))$\;
Compute $W_0^0, W_0^{-1}$, and $W_1$ as in Theorem 
\ref{thm:nonuniformbdryworks}\label{use:theorem:in:alg:1}\;
Compute $\tilde{D}_1$ and $\tilde{D}_0$ 
as in Theorem \ref{thm:nonuniformbdryworks}\label{use:theorem:in:alg:2}\;
$\hat{c}_F\gets 
\sum_{j\in\{-1,1,2\}}\sum_{x\in  D_{j}}
\#\Phi^{\partial,\alpha}_{h,\tilde{\rho}}(x)
+\sum_{x\in D_0}
\#(\Phi^{\partial,\alpha+\kappa}_{h,\tilde{\rho}}(x)
\cap\Phi^\alpha_{h,\tilde{\rho}}(x))$\;
\If{$\rho'>\rho$}
{$\Psi_{h,\rho,\rho'}(D_0,D_1)\gets 
\partial R_{\tilde{\rho}}^{\rho'}(\tilde{D}_0,\tilde{D}_1)$\label{R:is:needed}\;}
\Else{$\Psi_{h,\rho,\rho'}(D_0,D_1)\gets (\tilde{D}_0,\tilde{D}_1)$\;}
\caption{A single boundary tracking step.\label{Alg:BdryEulrStep}\\
Evaluates $\Psi_{h,\rho,\rho'}$ and gathers data for cost estimator \eqref{def:C}.}
\end{algorithm}

\begin{algorithm}[p]
\KwIn{discretization ${\bm{\delta}}=(\bm{h},\bm{t},\bm{\rho})\in\aleph$,\\
boundary of initial set $(D_0^0,D_1^0)\in \bdc_{\rho_0}$}
\KwOut{splines $v_R,v_F:[0,T]\to\R_{>0}$,\\
boundaries $(D_0^k,D_1^k)\in\bdc_{\rho_k}$, $k=0,\ldots,n$, of reachable sets}

\For{$k\gets 0$ \KwTo $n-1$}{
$(D_0^{k+1},D_1^{k+1},\hat{c}_{R,{k}},\hat{c}_{F,k})\gets
[\text{Algorithm\ \ref{Alg:BdryEulrStep}}](h_{k+1},\rho_k,\rho_{k+1},D_0^k,D_1^k)$\;
$(\hat{v}_{R,k},\hat{v}_{F,k})
\gets
(\frac14\hat{c}_{R,k}\rho_{k}^{d_R},
\frac{\min(\rho_k,\rho_{k+1})^{d_F+1}}{(h_{k+1})^{d_F}\sigma(h_{k+1},\rho_k)}
\frac{\hat{c}_{F,k}}{\hat{c}_{R,k}})$\;}

$\hat{c}_{R,n}
\gets\sum_{j=-1}^{j=2}\#(\partial_{\rho_n}^j(\tr_{\rho_n}^{-1}(D_0^n,D_1^n)))$\;
$(\hat{v}_{R,n},\hat{v}_{F,n})
\gets(\frac14\hat{c}_{R,n}\rho_n^{d_R},
\hat{v}_{F,n-1})$\; 
$(v_R,v_F)
\gets(\text{[linear spline]}(t,\hat{v}_{R}),
\text{[linear spline]}(t,\hat{v}_{F}))$\;

\caption{Boundary tracking with nonuniform discretization.\\
Implements recursion \eqref{recursion:in:bd} and gathers data 
for estimator \eqref{def:C}.\label{Alg:boundary:tracking}}
\end{algorithm}

\begin{algorithm}[p]
\KwIn{$\eps>0$, discretization ${\bm{\delta}}^{(0)}\in\aleph_{n_0}$, functions $C$ and $E$}
\KwOut{$n_m\in\N_1$, discretization ${\bm{\delta}}^{(m)}\in\aleph_{n_m}$}
$m\gets 0$\;
\While{$E({\bm{\delta}}^{(m)})>\eps$}{
    \If{$\min_j\Delta C({\bm{\delta}}^{(m)};j)>0$}{\label{alg:subdivision:condition}
        $k_m\gets\argmin_j\frac{\Delta E({\bm{\delta}}^{(m)};j)}{\Delta C({\bm{\delta}}^{(m)};j)}$\label{alg:subdivision:first:choice}\;
    }\Else{
        $k_m\gets\argmin_j\Delta C({\bm{\delta}}^{(m)};j)$\;
        \label{alg:subdivision:alternative}
    }
    ${\bm{\delta}}^{(m+1)}
        \gets\psi[{\bm{\delta}}^{(m)};k_m]$\label{alg:subdivision:step}\;
    \textbf{if} $k_m>0$ \textbf{then} {$n_{m+1}\gets n_m+1$} \textbf{else} {$n_{m+1}\gets n_m$}\;
    $m\gets m+1$\;
}
\caption{An abstract subdivision scheme.\\
Finds cost-efficient discretization with guaranteed error bound.}
\label{Alg:RefineDisc}
\end{algorithm}

\begin{algorithm}[p] 
\KwIn{thresholds $\eps_1>\eps_2>\ldots>\eps_{\ell_{\max}}>0$}
\KwOut{$n\in\N_1$, 
discretization ${\bm{\delta}}\in\aleph_n$,\\
boundaries $(D_0^k,D_1^k)\in\bdc_{\rho_k}$, $k=0,\ldots,n$, of reachable sets}

$(n_0,\ell)\gets(\lceil4LT\rceil,0)$\;
${\bm{\delta}}^{(0)}\gets(h^{(0)},t^{(0)},\rho^{(0)})
\gets(\frac{T}{n_0}\mathbbm{1}_{n_0},
\frac{T}{n_0}\Sigma_+\mathbbm{1}_{n_0},
2LP\tfrac{T^2}{n_0^2}\mathbbm{1}_{n_0+1})$\;

\While{$\ell\le\ell_{\max}$}{
$(D_0^0,D_1^0)\gets
\tr_{\rho_0^{(\ell)}}(\pi^{\rho_0^{(\ell)}/2}_{\rho_0^{(\ell)}}(X_0))$\;\label{algl:CalcInitBdry}
$((D_0^k,D_1^k)_{k=0}^{n_\ell},v^{(\ell)}_R,v^{(\ell)}_F)\gets
[\text{Algorithm\ \ref{Alg:boundary:tracking}}]({\bm{\delta}}^{(\ell)},(D_0^0,D_1^0))$\;
\If{$\ell<\ell_{\max}$}
{$(n_{\ell+1},{\bm{\delta}}^{(\ell+1)})
\gets[\text{Algorithm\ \ref{Alg:RefineDisc}}]
(\eps_{\ell+1},{\bm{\delta}}^{(\ell)},C(\,\cdot\,;v^{(\ell)}_Rv^{(\ell)}_F),E(\,\cdot\,))$\;}
$\ell\gets \ell+1$\;}
 
$(n,{\bm{\delta}})\gets(n_{\ell_{\max}},{\bm{\delta}}^{(\ell_{\max})})$\;

\caption{Boundary tracking with iterative refinement.\\
Approximates reachable sets with near-optimal discretization.}
\label{Alg:IterativeMethod}
\end{algorithm}
\subsection{Verifying hypotheses P1 to P4}\label{sec:verification}

In the following, we show that for a given, fixed $v\in C^0([0,T],\R_{>0}),$ the functions $E(\,\cdot\,)$ and $C(\,\cdot\,;v)$ from \eqref{eq:errdef} 
and \eqref{def:C} satisfy properties \ref{property:rhotozeroerr} to
\ref{property:CostFracNotZero}. 

\begin{lemma}
We have
\begin{align}
&1+s\le e^s\quad\forall\,s\in\R,\label{exp:lower:estimate}\\
&e^s\le 1+s+s^2\quad\forall\,s\in[0,3/2],\label{est:exp}\\
&e^\frac{s}{2}(e^\frac{s}{2}-1)\tfrac{(1+s/2)^2}{s}
-(e^{s}-1)\tfrac{(1+s)^2}{2s}
\le-\tfrac{3s}{8}\quad\forall\,s\in(0,1/4].\label{special:est}
\end{align}
\end{lemma}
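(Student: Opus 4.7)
Inequalities \eqref{exp:lower:estimate} and \eqref{est:exp} are standard calculus facts; \eqref{special:est} is the technical heart and requires combining the first two carefully, because the $s^2$-terms on the two sides of the claimed difference cancel exactly and force a two-sided estimate on $e^{s/2}-1$.

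For \eqref{exp:lower:estimate}, I let $f(s):=e^s-1-s$ and note that $f'(s)=e^s-1$ vanishes only at $s=0$, is negative for $s<0$ and positive for $s>0$, so $f(s)\ge f(0)=0$ on all of $\R$. For \eqref{est:exp}, I set $g(s):=1+s+s^2-e^s$, so that $g(0)=g'(0)=0$. Since $g''(s)=2-e^s$ switches sign only at $s=\ln 2$, the derivative $g'$ is unimodal on $[0,3/2]$: it increases from $0$ to $g'(\ln 2)=2\ln 2-1>0$ and then decreases. Hence $g$ is monotone increasing on an initial subinterval of $[0,3/2]$ and monotone decreasing on the terminal subinterval ending at $3/2$, and the direct check $g(3/2)=\tfrac{19}{4}-e^{3/2}>0$ rules out any negative values in between.

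For \eqref{special:est}, my plan is to convert it into a polynomial inequality in the two variables $s$ and $b:=e^{s/2}-1$, and then bound $b$ using \eqref{exp:lower:estimate} and \eqref{est:exp}. After multiplying through by $2s>0$ and using the identity $e^{s/2}(e^{s/2}-1)=e^s-e^{s/2}$, the claim is equivalent to
\[
(e^s-1)(1+s)^2-2(e^s-e^{s/2})(1+s/2)^2\ \ge\ \tfrac{3s^2}{4}.
\]
Setting $a:=e^{s/2}=1+b$ and using $e^s-1=(a-1)(a+1)$ and $e^s-e^{s/2}=a(a-1)$, the left-hand side factors cleanly as $(a-1)\bigl[(1-a)+2s+(\tfrac{a}{2}+1)s^2\bigr]$, which upon substituting $a=1+b$ and expanding equals
\[
-b^2+2bs+\tfrac{3}{2}bs^2+\tfrac{1}{2}b^2s^2.
\]

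Applying \eqref{exp:lower:estimate} and \eqref{est:exp} to $s/2\in[0,1/8]$ yields the two-sided bound $\tfrac{s}{2}\le b\le \tfrac{s}{2}+\tfrac{s^2}{4}$. I then estimate the four summands separately: the negative term $-b^2$ using the resulting upper bound on $b^2$, and the three positive terms using $b\ge s/2$. Summing these lower bounds produces $\tfrac{3s^2}{4}+\tfrac{s^3}{2}+\tfrac{s^4}{16}$, which is clearly at least $\tfrac{3s^2}{4}$ for $s\ge 0$, completing the proof. The main obstacle is precisely the $s^2$-cancellation: a one-sided bound on $b$ in either direction would destroy the leading coefficient, so the two-sided bound derived from \eqref{exp:lower:estimate} and \eqref{est:exp} is essential, and the restriction $s\in(0,1/4]$ is what makes the upper bound on $b$ tight enough for the argument to close.
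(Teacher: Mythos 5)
Your proof is correct and follows essentially the same route as the paper's: both arguments factor $e^{s/2}-1$ out of the difference in \eqref{special:est} and then control that factor from below via \eqref{exp:lower:estimate} and from above via \eqref{est:exp} applied at $s/2$, your version merely expanding the factored expression into monomials in $b=e^{s/2}-1$ and $s$ before estimating term by term. The only quibble is your closing remark: the restriction $s\in(0,1/4]$ is not what makes the argument close, since your final lower bound $\tfrac{3s^2}{4}+\tfrac{s^3}{2}+\tfrac{s^4}{16}\ge\tfrac{3s^2}{4}$ holds for every $s\ge 0$ with $s/2$ in the validity range of \eqref{est:exp}.
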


\begin{proof}
Both \eqref{exp:lower:estimate} and \eqref{est:exp} are well-known.
Using $e^s-1=(e^{\frac{s}{2}}-1)(e^{\frac{s}{2}}+1)$ and 
$e^\frac{s}{2}-1\le\tfrac{s}{2}+\tfrac{s^2}{4}$,
which is \eqref{est:exp} with $s/2$ in lieu of $s$, 
we obtain
\begin{align*}
&e^\frac{s}{2}(e^\frac{s}{2}-1)\tfrac{(1+s/2)^2}{s}-(e^{s}-1)\tfrac{(1+s)^2}{2s}\\
&=\tfrac{1}{2s}(e^\frac{s}{2}-1)[2e^\frac{s}{2}(1+\tfrac{s}{2})^2-(e^\frac{s}{2}+1)(1+s)^2]\\
&=\tfrac{1}{2s}(e^\frac{s}{2}-1)
[(e^\frac{s}{2}-1)-\tfrac{s^2}{2}e^\frac{s}{2}-2s-s^2]\\
&\le\tfrac{1}{2s}(e^\frac{s}{2}-1)
[\tfrac{s}{2}+\tfrac{s^2}{4}-2s-s^2]
\le-\tfrac{1}{2s}(e^\frac{s}{2}-1)\tfrac{3s^2}{4}
\le-\tfrac{3s}{8}.
\end{align*} 
\end{proof}

We collect some statements that follow directly from the definition 
\eqref{eq:sigmadef}.

\begin{lemma}\label{lem:sigma}
The function $\sigma$ is monotone increasing in both arguments.
In particular, the constant $\bar\sigma:=\sigma(\tfrac{1}{4L},\tfrac{P}{8L})$
satisfies
\begin{equation}\label{sigma:bar}
\sigma(h_{k+1},\rho_k)\le\bar\sigma
\quad\forall\, k\in[0,n-1],\ \forall\,(\bm{h},\bm{t},\bm{\rho})\in\aleph_n,\ \forall\,n\in\N_1.
\end{equation}
Furthermore, for all $(h,\rho)\in[0,\tfrac{1}{4L}]\times(0,\tfrac{P}{8L}]$,
we have
\begin{align}
&\tfrac74\rho\le\sigma(h,\rho)=4\sigma(h,\rho/4),\label{eq:sig:rholinear}\\
&\tfrac12\sigma(h,\rho)\le\sigma(h/2,\rho).
\label{eq:sig:hdiff}
\end{align}
\end{lemma}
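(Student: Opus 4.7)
The plan is to factor $\sigma(h,\rho)=\rho\,g(h)$ with $g(h):=\tfrac14(1+Lh)\bigl(\tfrac{1+Lh}{1-Lh}+6\bigr)$. This makes the $\rho$-dependence manifestly linear and positive, so monotonicity in $\rho$ and the rescaling identity $\sigma(h,\rho)=4\sigma(h,\rho/4)$ in \eqref{eq:sig:rholinear} are immediate, and the lower bound $\tfrac{7}{4}\rho\le\sigma(h,\rho)$ will follow from a uniform lower bound on $g$.

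For monotonicity in $h$, I would observe that on $[0,\tfrac{1}{4L}]$ the factor $1+Lh$ is positive and increasing, while $(1+Lh)/(1-Lh)$ has a positive, increasing numerator and a positive, decreasing denominator, so it too is increasing; hence $g$ is increasing. Combined with the constraints $\|\bm{h}\|_\infty\le\tfrac{1}{4L}$ and $\|\bm{\rho}\|_\infty\le\tfrac{P}{8L}$ from Definition \ref{aleph:n}, this yields the uniform upper bound \eqref{sigma:bar}. Evaluating $g(0)=7/4$ then supplies the lower bound in \eqref{eq:sig:rholinear}.

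The main (and really only) subtlety is \eqref{eq:sig:hdiff}: since $g$ is increasing we have $g(h/2)\le g(h)$, so the desired inequality $g(h/2)\ge\tfrac12 g(h)$ is not automatic. My plan is to combine a uniform lower bound on $g(h/2)$ with a uniform upper bound on $\tfrac12 g(h)$: monotonicity gives $g(h/2)\ge g(0)=7/4$, while $\tfrac12 g(h)\le\tfrac12 g(\tfrac{1}{4L})=115/96$, and the arithmetic check $115/96<7/4$ closes the gap. Multiplying by $\rho>0$ then yields \eqref{eq:sig:hdiff}.
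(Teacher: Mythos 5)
Your proposal is correct and follows essentially the same route as the paper: linearity in $\rho$ for \eqref{eq:sig:rholinear}, monotonicity in $h$ plus the numerical comparison $\tfrac12\sigma(\tfrac{1}{4L},\rho)=\tfrac{115}{96}\rho<\tfrac74\rho\le\sigma(\tfrac{h}{2},\rho)$ for \eqref{eq:sig:hdiff}. The only cosmetic difference is that you establish monotonicity in $h$ by factoring into increasing positive terms rather than by the paper's derivative computation.
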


\begin{proof}

The function $\sigma$ is clearly monotone increasing in $\rho$. A simple computation shows that $\frac{d}{ds}\sigma(s,\rho)>0$ for all $s\in(0,\tfrac{1}{4L}]$ and $\rho\in(0,\frac{P}{8L}]$. With this \eqref{sigma:bar} then follows from \eqref{eq:htbasics}.

\medskip

The inequality in statement \eqref{eq:sig:rholinear} holds since
\begin{align*}
\tfrac74\rho
=\tfrac{\rho}{4}\times 1\times(1+6)
\le\tfrac{\rho}{4}(1+Lh)(\tfrac{1+Lh}{1-Lh}+6)
=\sigma(h,\rho),
\end{align*}
and the identity in statement \eqref{eq:sig:rholinear} follows directly 
from the definition \eqref{eq:sigmadef}.
To show 
\eqref{eq:sig:hdiff} we use monotonicity, \eqref{eq:coupling}, and \eqref{eq:sig:rholinear} with $h/2$ in lieu of $h$ in the computation
\begin{equation*}
    \tfrac{1}{2}\sigma(h,\rho)\le\tfrac{1}{2}\sigma(\tfrac{1}{4L},\rho)=\tfrac{115}{96}\rho<\tfrac{7}{4}\rho\le
\sigma(\tfrac{h}{2},\rho).
\end{equation*}
\end{proof}

Now we check properties \ref{property:rhotozeroerr} to
\ref{property:CostFracNotZero}. 

\begin{lemma}\label{Lem:property:ErrorUpperBd}
The function $E$ from \eqref{eq:errdef} satisfies property \ref{property:rhotozeroerr}
with $\aleph_n$ as in Definition \ref{aleph:n}.
\end{lemma}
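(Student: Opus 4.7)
The plan is to split the error into the contribution from the initial discretization and the contributions from the dynamics on each subinterval, estimate each $\mc{E}_k({\bm{\delta}})$ by something of the form $\mathrm{const}\cdot(\rho_k+\rho_{k-1})$ using the elementary inequalities of the preceding lemma, and then exploit the coupling \eqref{eq:coupling} together with $\sum_{k=1}^n h_k=T$ to control $\sum_k\rho_k$ by $\sqrt{\|\bm{\rho}\|_\infty}$ (rather than by $n\|\bm{\rho}\|_\infty$, which would be useless).

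First, $\mc{E}_0({\bm{\delta}})=\tfrac{\rho_0}{2}e^{LT}\le\tfrac{e^{LT}}{2}\|\bm{\rho}\|_\infty$ trivially. For $k\in[1,n]$ I use $e^{L(T-t_k)}\le e^{LT}$, the bound $e^{Lh_k}-1\le 2Lh_k$ (which follows from \eqref{est:exp} since $Lh_k\le 1/4$), the elementary bound $(1+Lh_k)^2\le(5/4)^2$, and the coupling $2LPh_k^2=\rho_k$ coming from \eqref{eq:coupling}. This turns $(e^{Lh_k}-1)Ph_k\le 2LPh_k^2=\rho_k$ and $(e^{Lh_k}-1)\tfrac{(1+Lh_k)^2}{Lh_k}\tfrac{\rho_{k-1}}{2}\le\tfrac{25}{16}\rho_{k-1}$. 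Finally, $\chi_k({\bm{\delta}})\le\tfrac{\rho_k}{2}$ by its definition. Hence
\begin{equation*}
\mc{E}_k({\bm{\delta}})\le e^{LT}\bigl[\tfrac{3}{2}\rho_k+\tfrac{25}{16}\rho_{k-1}\bigr],\qquad k\in[1,n].
\end{equation*}

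The main obstacle is that naively summing gives $n$ terms of size $\|\bm{\rho}\|_\infty$, which does not vanish as $\|\bm{\rho}\|_\infty\to 0$. The resolution is the quadratic coupling \eqref{eq:coupling}: for $k\in[1,n]$,
\begin{equation*}
\rho_k=2LP h_k^2\le 2LP\|\bm{h}\|_\infty h_k,
\end{equation*}
so that $\sum_{k=1}^n\rho_k\le 2LPT\|\bm{h}\|_\infty$. Because \eqref{eq:coupling} also yields $\|\bm{h}\|_\infty=\max_{k\in[1,n]}\sqrt{\rho_k/(2LP)}\le\sqrt{\|\bm{\rho}\|_\infty/(2LP)}$, I obtain the crucial bound
\begin{equation*}
\sum_{k=1}^n\rho_k\le T\sqrt{2LP\|\bm{\rho}\|_\infty},
\end{equation*}
and similarly $\sum_{k=1}^n\rho_{k-1}\le\|\bm{\rho}\|_\infty+T\sqrt{2LP\|\bm{\rho}\|_\infty}$.

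Combining these ingredients yields $E({\bm{\delta}})\le\omega_1(\|\bm{\rho}\|_\infty)$ with
\begin{equation*}
\omega_1(s):=e^{LT}\Bigl[\bigl(\tfrac{1}{2}+\tfrac{25}{16}\bigr)s+\bigl(\tfrac{3}{2}+\tfrac{25}{16}\bigr)T\sqrt{2LPs}\Bigr],
\end{equation*}
which is continuous and positive on $\R_{>0}$ and satisfies $\lim_{s\to 0}\omega_1(s)=0$. Since $\omega_1$ does not depend on $n$ or the particular choice of ${\bm{\delta}}\in\aleph_n$, this verifies property \ref{property:rhotozeroerr}.
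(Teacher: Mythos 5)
Your proof is correct and follows essentially the same route as the paper: the same decomposition of $E$ into $\mc{E}_0$ plus the dynamic contributions, term-by-term bounds via \eqref{est:exp} and \eqref{eq:htbasics}, and the same key step of using the coupling \eqref{eq:coupling} to convert $\sum_k\rho_k$ into $T\sqrt{2LP\|\bm{\rho}\|_\infty}$. The only difference is cosmetic: your constants are slightly sharper (the paper settles for a cruder $e^{4LT}$ factor), which is immaterial for property \ref{property:rhotozeroerr}.
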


\begin{proof}
Let $n\in\N_1$ and ${\bm{\delta}}=(\bm{h},\bm{t},\bm{\rho})\in\aleph_n$. 
We clearly have
\begin{equation}\label{1A}
\frac{\rho_0}{2}e^{LT}\le\frac12e^{LT}\|\rho\|_\infty,
\end{equation}
and using \eqref{eq:coupling}, we find that
\begin{equation}\label{1B}\begin{aligned}
&\sum_{k=1}^ne^{L(T-t_k)}\chi_k({\bm{\delta}})
\le e^{LT}\sum_{k=1}^n\frac{\rho_{k}}{2}
=LPe^{LT}\sum_{k=1}^nh_k^2\\
&\le LPe^{LT}\|h\|_\infty\sum_{k=1}^nh_k
\le LPTe^{LT}\|h\|_\infty
=\tfrac{1}{\sqrt{2}}Te^{LT}\sqrt{LP\|\rho\|_\infty}.
\end{aligned}\end{equation}
It follows from \eqref{eq:htbasics} that $0\le Lh_k\le\frac14$.
Using \eqref{est:exp},
\eqref{exp:lower:estimate} and \eqref{eq:coupling}, 
we find
\begin{equation}\label{1C}\begin{aligned}
&\sum_{k=1}^ne^{L(T-t_k)}(e^{Lh_k}-1)
\Big(Ph_k+\tfrac{(1+Lh_k)^2}{Lh_k}\tfrac{\rho_{k-1}}{2}\Big)\\
&\le e^{LT}\sum_{k=1}^n(1+Lh_k)Lh_k
\Big(Ph_k+\tfrac{(1+Lh_k)^2}{Lh_k}\tfrac{\rho_{k-1}}{2}\Big)\\
&=e^{LT}\sum_{k=1}^n(1+Lh_k)\Big(LPh_k^2+(1+Lh_k)^2\tfrac{\rho_{k-1}}{2}\Big)\\
&\le e^{4LT}\sum_{k=1}^n\Big(LPh_k^2+\tfrac{\rho_{k-1}}{2}\Big)
=e^{4LT}\Big(\sum_{k=1}^nLPh_k^2+\tfrac{\rho_{0}}{2}+\sum_{k=1}^{n-1}LPh_k^2\Big)\\
&\le e^{4LT}\Big(\|\rho\|_\infty+2LP\sum_{k=1}^nh_k^2\Big)
\le e^{4LT}\Big(\|\rho\|_\infty+2LPT\|h\|_\infty\Big)\\
&=e^{4LT}\Big(\|\rho\|_\infty+T\sqrt{2LP\|\rho\|_\infty}\Big).
\end{aligned}\end{equation}
Combining \eqref{1A}, \eqref{1B} and \eqref{1C}, we see that
$E({\bm{\delta}})\le\omega_1(\|\rho\|_\infty)$ with a function
\[\omega_1(s):=\frac12e^{LT}s
+\tfrac{1}{\sqrt{2}}Te^{LT}\sqrt{LPs}
+e^{4LT}\Big(s+T\sqrt{2LPs}\Big),\]
which is continuous on $\R_{>0}$ and satisfies $\lim_{s\to0}\omega_1(s)=0$.
\end{proof}

\begin{lemma}
The functions $\psi$ and $E$ given by \eqref{subdivision:rule:full} and
\eqref{eq:errdef} satisfy property \ref{property:ErrDecreases2}
with $\aleph_n$ as in Definition \ref{aleph:n}.
\end{lemma}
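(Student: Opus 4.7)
The plan is to show that refining the discretization at node $j$ causes the error bound $E$ to decrease by at least $\omega_2(\rho_j):=c\rho_j^{3/2}$ for a suitably small constant $c>0$, and to verify that such $\omega_2$ is strictly increasing on $\R_+$, continuous, and vanishes at zero. I split into the cases $j=0$ and $j\in[1,n]$.

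For $j=0$, only $\mc{E}_0$ and $\mc{E}_1$ are affected by the subdivision. A direct computation gives $\mc{E}'_0-\mc{E}_0=-\frac{3\rho_0}{8}e^{LT}$, and the $\rho_0$-coefficient of $\mc{E}_1$ shrinks by a factor of four, contributing a decrease of at least $\frac{3\rho_0}{8}e^{L(T-t_1)}$ after invoking $(e^{Lh_1}-1)\frac{(1+Lh_1)^2}{Lh_1}\ge 1$ from \eqref{exp:lower:estimate}. The only potential increase comes from $\chi_1$, bounded by $\frac{\rho_0}{2}e^{L(T-t_1)}$ and occurring only if $\rho_0=\rho_1$. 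Combining these with $e^{LT}\ge e^{L(T-t_1)}$ yields $\Delta E({\bm{\delta}};0)\le-\frac{\rho_0 e^{LT}}{4}$.

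For $j\in[1,n]$ I would decompose $\Delta E=[\mc{E}'_j+\mc{E}'_{j+1}-\mc{E}_j]+[\mc{E}'_{j+2}-\mc{E}_{j+1}]$ and analyze each bracket. In the first bracket, the $Ph$-terms telescope via $(e^{Lh_j/2}-1)(e^{Lh_j/2}+1)=e^{Lh_j}-1$ to $-\tfrac12 e^{L(T-t_j)}(e^{Lh_j}-1)Ph_j$, which by \eqref{exp:lower:estimate} and the coupling $\rho_j=2LPh_j^2$ is a decrease of at least $\tfrac14 e^{L(T-t_j)}\rho_j$; the $\rho_{j-1}$-terms contribute a decrease of at least $\tfrac{3}{8}e^{L(T-t_j)}Lh_j\rho_{j-1}$ thanks to the key inequality \eqref{special:est} applied with $s=Lh_j$; a case split on the relative sizes of $\rho_{j-1}$ and $\rho_j$ shows that the $\chi$-contribution is nonpositive (and strictly negative if $\rho_{j-1}<\rho_j$); and the only positive term is the $\rho'_j=\rho_j/4$-dependent part of $\mc{E}'_{j+1}$ inserted at the new node, which \eqref{est:exp} bounds by a constant multiple of $e^{L(T-t_j)}\rho_j$ strictly smaller than $\tfrac14$. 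In the second bracket, shrinking $\rho_j$ to $\rho_j/4$ reduces the main coefficient by a factor of four, giving a decrease of at least $\tfrac{3}{8}e^{L(T-t_{j+1})}\rho_j$, while $\chi_{j+1}$ can increase by at most $\tfrac{\rho_{j+1}}{2}$ and only when $\rho_j=\rho_{j+1}$.

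The hard part will be the critical scenario $\rho_{j-1}\ge\rho_j=\rho_{j+1}$, where the favorable $\chi$-decrease in the first bracket disappears and the worst-case $\chi$-increase materializes in the second, so that the $O(\rho_j)$ contributions nearly cancel. To recover a strictly negative residual I would refine all the above estimates to next order in $s=Lh_j$ by Taylor expansion, and use $e^{L(T-t_{j+1})}=e^{L(T-t_j)}e^{-Lh_{j+1}}$ together with $1-e^{-Lh_{j+1}}\ge Lh_{j+1}/2$. The leading $O(\rho_j)$ contributions then cancel exactly, leaving a net decrease of at least $c\,e^{L(T-t_{j+1})}Lh_j\rho_j$ for a universal $c>0$. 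Since $h_{j+1}=h_j$ in the critical case, the coupling $\rho_j=2LPh_j^2$ yields $Lh_j=\sqrt{L\rho_j/(2P)}$, converting this bound to a decrease of at least $c'\rho_j^{3/2}$. In every noncritical subcase the decrease is of order $\rho_j$, hence dominates $\rho_j^{3/2}$ on the admissible range $\rho_j\in(0,P/(8L)]$ permitted by \eqref{eq:htbasics}. Choosing $c$ small enough that $cr^{3/2}$ is also dominated by the $j=0$ bound $re^{LT}/4$ on this range produces the required strictly increasing continuous function $\omega_2\in C^0(\R_+,\R_+)$ with $\omega_2(0)=0$ satisfying $\Delta E({\bm{\delta}};j)\le-\omega_2(\rho_j)$ uniformly in $j$ and ${\bm{\delta}}$.
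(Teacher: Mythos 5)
Your proposal is correct and follows essentially the same route as the paper's proof: the same split into $j=0$ versus $j\ge 1$, the same two-bracket decomposition, the key inequality \eqref{special:est} applied with $s=Lh_j$ to supply the decisive next-order decrease $-\tfrac38 Lh_j\rho_{j-1}\le-\tfrac38 Lh_j\rho_j$ in the critical subcase, and the coupling \eqref{eq:coupling} to convert the residual $O(Lh_j\rho_j)$ gain into $O(\rho_j^{3/2})$. The only cosmetic differences are that the paper takes $\omega_2(s)=\min(\tfrac14 s,\tfrac{1}{10}\sqrt{L/(2P)}\,s^{3/2})$ rather than a pure power, does not need your extra gain from $e^{L(T-t_{j+1})}<e^{L(T-t_j)}$ (it simply bounds this factor by $e^{L(T-t_j)}$ and lets \eqref{special:est} do all the work), and treats $j=n$ as a separate case in which your second bracket is absent --- which only helps, since that bracket is the one contributing the positive $+\tfrac18 e^{L(T-t_j)}\rho_j$.
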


\begin{proof}
In view of inequalities \eqref{result:for:1} and \eqref{result:for:2} 
proved below, we have
\[\Delta E({\bm{\delta}};k)
\le-\min(\tfrac{1}{4}\rho_k,\tfrac{1}{10}\sqrt{L/(2P)}\rho_k^{3/2})
\quad\forall\,k\in[0,n],\] 
and the function 
$\omega_2(s):=\min(\tfrac{1}{4}\rho_k,\tfrac{1}{10}\sqrt{L/(2P)}\rho_k^{3/2})$ is obviously 
continuous and strictly increasing with $\omega_2(0)=0$.
We consider three different cases.

\medskip

\textit{Case 1:} When $k=0$, comparing terms in the sum \eqref{eq:errdef} yields 
\begin{align*}
&\Delta E({\bm{\delta}};0)
=E(\psi[{\bm{\delta}};0])-E({\bm{\delta}})\\
&=\mc{E}_0(\psi[{\bm{\delta}};0])+\mc{E}_1(\psi[{\bm{\delta}};0])
-\mc{E}_0({\bm{\delta}})-\mc{E}_1({\bm{\delta}})\\
&=\tfrac{\rho_0}{8}e^{LT}
+e^{L(T-t_1)}\Big(
(e^{Lh_1}-1)\big(Ph_1+\tfrac{(1+Lh_1)^2}{Lh_1}\tfrac{\rho_0}{8}\big)
+\chi_1(\psi[{\bm{\delta}};0])\Big)\\
&\qquad-\tfrac{\rho_0}{2}e^{LT}
-e^{L(T-t_1)}\Big(
(e^{Lh_1}-1)\big(Ph_1+\tfrac{(1+Lh_1)^2}{Lh_1}\tfrac{\rho_0}{2}\big)
+\chi_1({\bm{\delta}})\Big)\\
&=-\tfrac38\big(e^{LT}+e^{L(T-t_1)}
(e^{Lh_1}-1)\tfrac{(1+Lh_1)^2}{Lh_1}\big)\rho_0
+e^{L(T-t_1)}\big(\chi_1(\psi[{\bm{\delta}};0])-\chi_1({\bm{\delta}})\big).
\end{align*}
By \eqref{eq:rhofracs}, we have
\begin{align*}
&\chi_1(\psi[{\bm{\delta}};0])-\chi_1({\bm{\delta}})\\
&=\begin{cases}\tfrac{\rho_1}{2},&\tfrac{\rho_0}{4}<\rho_1,\\0,&\text{otherwise}\end{cases}
-\begin{cases}\tfrac{\rho_1}{2},&\rho_0<\rho_1,\\0,&\text{otherwise}\end{cases}
=\begin{cases}\tfrac{\rho_1}{2},&\rho_0=\rho_1,\\0,&\text{otherwise}\end{cases}
\le\tfrac{\rho_0}{2},
\end{align*}
and using
\eqref{exp:lower:estimate} we estimate
\begin{equation}\label{result:for:1}\begin{aligned}
&\Delta E({\bm{\delta}};0)
\le e^{L(T-t_1)}\big(-\tfrac38-\tfrac38
(e^{Lh_1}-1)\tfrac{(1+Lh_1)^2}{Lh_1}+\tfrac12\big)\rho_0\\
&\le e^{L(T-t_1)}\big(-\tfrac38-\tfrac38(1+Lh_1)^2+\tfrac12\big)\rho_0
\le-\tfrac14e^{L(T-t_1)}\rho_0
\le-\tfrac14\rho_0.
\end{aligned}\end{equation}

\textit{Case 2:} When $k\in(0,n)$, comparing terms in the sum \eqref{eq:errdef} yields
\begin{equation}\label{eq:overview}\begin{aligned}
&\Delta E({\bm{\delta}};k)
=E(\psi[{\bm{\delta}};k])-E({\bm{\delta}})\\
&=\mc{E}_k(\psi[{\bm{\delta}};k])+\mc{E}_{k+1}(\psi[{\bm{\delta}};k])
+\mc{E}_{k+2}(\psi[{\bm{\delta}};k])
-\mc{E}_k({\bm{\delta}})-\mc{E}_{k+1}({\bm{\delta}}).
\end{aligned}\end{equation}
Substituting the definitions and canceling terms yields
\begin{equation*}\begin{aligned}
&\mc{E}_{k+2}(\psi[{\bm{\delta}};k])-\mc{E}_{k+1}({\bm{\delta}})\\
&=e^{L(T-t_{k+1})}\Big[(e^{Lh_{k+1}}\!-\!1)
\big(Ph_{k+1}+\tfrac{(1+Lh_{k+1})^2}{Lh_{k+1}}\tfrac{\rho_k}{8}\big)
+\chi_{k+2}(\psi[{\bm{\delta}};k])\Big]\\
&\qquad-e^{L(T-t_{k+1})}\Big[(e^{Lh_{k+1}}\!-\!1)
\big(Ph_{k+1}+\tfrac{(1+Lh_{k+1})^2}{Lh_{k+1}}\tfrac{\rho_{k}}{2}\big)
+\chi_{k+1}({\bm{\delta}})\Big]\\
&=e^{L(T-t_{k+1})}\Big[(e^{Lh_{k+1}}-1)
\tfrac{(1+Lh_{k+1})^2}{Lh_{k+1}}(\tfrac{\rho_k}{8}-\tfrac{\rho_k}{2})
+(\chi_{k+2}(\psi[{\bm{\delta}};k])-\chi_{k+1}({\bm{\delta}}))\Big].
\end{aligned}\end{equation*}
Using \eqref{exp:lower:estimate},
we immediately obtain
\[
(e^{Lh_{k+1}}-1)
\tfrac{(1+Lh_{k+1})^2}{Lh_{k+1}}(\tfrac{\rho_k}{8}-\tfrac{\rho_k}{2})
\le-\tfrac38(1+Lh_{k+1})^2\rho_k
\le-\tfrac38\rho_k,
\]
and using \eqref{eq:rhofracs}, we see that
\begin{align*}
&\chi_{k+2}(\psi[{\bm{\delta}};k])-\chi_{k+1}({\bm{\delta}})\\
&=\begin{cases}
\frac{\rho_{k+1}}{2},&\frac{\rho_k}{4}<\rho_{k+1},\\
0,&\text{otherwise}
\end{cases}
-\begin{cases}
\frac{\rho_{k+1}}{2},&\rho_k<\rho_{k+1},\\
0,&\text{otherwise}
\end{cases}
=\begin{cases}
\frac{\rho_{k+1}}{2},&\rho_k=\rho_{k+1},\\
0,&\text{otherwise}
\end{cases}
\le\frac{\rho_k}{2}.
\end{align*}
Combining the above yields
\begin{equation}\label{E:first}
\mc{E}_{k+2}(\psi[{\bm{\delta}};k])-\mc{E}_{k+1}({\bm{\delta}})
\le\tfrac18e^{L(T-t_{k+1})}\rho_k
\le\tfrac{1}{8}e^{L(T-t_k)}\rho_k.
\end{equation}
We estimate the remaining difference
\begin{align*}
&\mc{E}_k(\psi[{\bm{\delta}};k])+\mc{E}_{k+1}(\psi[{\bm{\delta}};k])-\mc{E}_k({\bm{\delta}})\\
&=e^{L(T-t_k+\frac{h_k}{2})}\Big((e^{\frac{Lh_k}{2}}-1)
\big(P\tfrac{h_k}{2}+(1+\tfrac{Lh_k}{2})^2\tfrac{\rho_{k-1}}{Lh_k}\big)
+\chi_k(\psi[{\bm{\delta}};k])\Big)\\
&\qquad+e^{L(T-t_k)}\Big((e^{\frac{Lh_k}{2}}-1)
\big(P\tfrac{h_k}{2}+(1+\tfrac{Lh_k}{2})^2\tfrac{\rho_{k}}{4Lh_k}\big)
+\chi_{k+1}(\psi[{\bm{\delta}};k])\Big)\\
&\qquad-e^{L(T-t_k)}\Big((e^{Lh_k}-1)
\big(Ph_k+(1+Lh_k)^2\tfrac{\rho_{k-1}}{2Lh_k}\big)
+\chi_k({\bm{\delta}})\Big)
\end{align*}
by breaking it into parts.
Since $\chi_{k+1}(\psi[{\bm{\delta}};k])=0$, using \eqref{eq:htbasics}, we obtain
\begin{equation}\label{chi:terms}\begin{aligned}
&e^{L(T-t_k+\frac{h_k}{2})}\chi_k(\psi[{\bm{\delta}};k])
+e^{L(T-t_k)}\chi_{k+1}(\psi[{\bm{\delta}};k])
-e^{L(T-t_k)}\chi_k({\bm{\delta}})\\
&=e^{L(T-t_k)}e^{\frac{Lh_k}{2}}
\begin{cases}
\frac{\rho_k}{8},&\rho_{k-1}<\frac{\rho_k}{4},\\0,&\text{otherwise}
\end{cases}
-e^{L(T-t_k)}
\begin{cases}
\tfrac{\rho_k}{2},&\rho_{k-1}<\rho_k,\\0,&otherwise,
\end{cases}\\
&\resizebox{.9\hsize}{!}{$
\le\begin{cases}
e^{L(T-t_k)}(e^{\frac{Lh_k}{2}}\tfrac{\rho_k}{8}-\tfrac{\rho_k}{2}),
&\rho_{k-1}<\rho_k,\\0,&otherwise
\end{cases}
\le\begin{cases}
-0.35e^{L(T-t_k)}\rho_k,&\rho_{k-1}<\rho_k,\\
0,&otherwise.
\end{cases}$}
\end{aligned}\end{equation}
Using \eqref{est:exp}, we estimate
\begin{equation*}
\begin{aligned}
&e^{L(T-t_k)}(e^{\frac{Lh_k}{2}}-1)(1+\tfrac{Lh_k}{2})^2
\tfrac{\rho_{k}}{4Lh_k}\le e^{L(T-t_k)}(1+\tfrac{Lh_k}{2})^3\tfrac{\rho_{k}}{8}.
\end{aligned}\end{equation*}
Substituting $e^{L(T-t_k+\frac{h_k}{2})}=e^{L(T-t_k)}e^{\frac{Lh_k}{2}}$,
it follows from \eqref{special:est} with $Lh_k$ in lieu of $s$ that
\begin{equation*}
\begin{aligned}
&\resizebox{.9\hsize}{!}{$e^{L(T-t_k+\frac{h_k}{2})}(e^{\frac{Lh_k}{2}}-1)
(1+\tfrac{Lh_k}{2})^2\tfrac{\rho_{k-1}}{Lh_k}
-e^{L(T-t_k)}(e^{Lh_k}-1)
(1+Lh_k)^2\tfrac{\rho_{k-1}}{2Lh_k}$}\\
&\le -\frac{3}{8}Lh_ke^{L(T-t_k)}\rho_{k-1}
\le\begin{cases}
0,&\rho_{k-1}<\rho_k,\\
-\frac{3}{8}Lh_ke^{L(T-t_k)}\rho_{k},&otherwise.
\end{cases}
\end{aligned}\end{equation*}
Finally, algebraic manipulations, \eqref{eq:coupling} and \eqref{exp:lower:estimate}
yield
\begin{equation}\label{P:terms}\begin{aligned}
&e^{L(T-t_k+\frac{h_k}{2})}(e^{\frac{Lh_k}{2}}-1)P\tfrac{h_k}{2}\\
&\qquad+e^{L(T-t_k)}(e^{\frac{Lh_k}{2}}-1)P\tfrac{h_k}{2}-e^{L(T-t_k)}(e^{Lh_k}-1)Ph_k\\
&=-e^{L(T-t_k)}(e^{Lh_k}-1)P\tfrac{h_k}{2}\\
&=-e^{L(T-t_k)}(e^{Lh_k}-1)\tfrac{\rho_k}{4Lh_k}
\le-e^{L(T-t_k)}\tfrac{\rho_k}{4}.
\end{aligned}\end{equation}
Because of \eqref{eq:htbasics} we have $\tfrac38Lh_k<0.35$,
so summing up inequalities \eqref{chi:terms} to \eqref{P:terms} 
and expanding the cubic term yields
\begin{equation}\label{E:second}
\begin{aligned}
&\mc{E}_k(\psi[{\bm{\delta}};k])+\mc{E}_{k+1}(\psi[{\bm{\delta}};k])-\mc{E}_k({\bm{\delta}})\\
&\le e^{L(T-t_k)}\big[(1+\tfrac{Lh_k}{2})^3\tfrac{\rho_{k}}{8}-\tfrac{\rho_{k}}{4}
+\max\{-0.35\rho_k,-\tfrac38Lh_k\rho_k\}\big]\\
&=e^{L(T-t_k)}\rho_k
\left[-\tfrac{1}{8}-\tfrac{3}{16}Lh_k
+\tfrac{3}{32}(Lh_k)^2+\tfrac{1}{64}(Lh_k)^3\right].
\end{aligned}
\end{equation}
In view of \eqref{eq:overview}, combining inequalities \eqref{E:first} and 
\eqref{E:second}, and using \eqref{eq:htbasics} and \eqref{eq:coupling} yields 
\begin{equation}\label{result:for:2}
\begin{aligned}
\Delta E({\bm{\delta}};k)
&\le e^{L(T-t_k)}\rho_kLh_k
[-\tfrac{3}{16}+\tfrac{3}{32}Lh_k+\tfrac{1}{64}(Lh_k)^2]\\
&<-\tfrac{1}{10}e^{L(T-t_k)}\rho_kLh_k\le-\tfrac{1}{10}\sqrt{L/(2P)}\rho_k^{3/2}.
\end{aligned}
\end{equation}

\textit{Case 3:} When $k=n$, then comparing terms in the sum \eqref{eq:errdef} yields
\[
\Delta E({\bm{\delta}};n)
=E(\psi[{\bm{\delta}};n])-E({\bm{\delta}})
=\mc{E}_n(\psi[{\bm{\delta}};n])+\mc{E}_{n+1}(\psi[{\bm{\delta}};n])
-\mc{E}_n({\bm{\delta}}).
\]
Since estimate \eqref{E:second} holds for $k=n$ for the same reasons as above, 
following the same process as in \eqref{result:for:2} yields the same result
for $k=n$.
\end{proof}

\begin{lemma}\label{Lem:property:DeltaCMinRho}
Let $v\in C^0([0,T],\R_{>0})$.
Then there exists $c>0$ such that for any $n\in\N_1$ and any 
${\bm{\delta}}=(\bm{h},\bm{t},\bm{\rho})\in\aleph_n$, the functions $\psi$ and $C$ 
given by \eqref{subdivision:rule:full} and \eqref{def:C} satisfy 
\begin{equation}\label{eq:P3Inequality}
c(\min_i\rho_i)^{-d_R-d_F/2}\le\Delta C({\bm{\delta}};v;\argmin_i\rho_i).
\end{equation}
In particular, the function $C(\,\cdot\,;v):\aleph\to\R_{>0}$ satisfies 
property \ref{property:CostIncreasesOnMin}.
\end{lemma}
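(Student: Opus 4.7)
The plan is to split the argument according to whether $j^\star := \argmin_i\rho_i$ equals $0$ or lies in $[1,n]$, exploiting the fact that in both cases only a handful of summands in $C(\bm\delta;v)=\sum_k\mc{C}_k(\bm\delta;v)$ change under the subdivision rule $\psi[\,\cdot\,;j^\star]$. Throughout, $v_{\min}:=\min_{t\in[0,T]}v(t)>0$ since $v$ is continuous on the compact interval $[0,T]$, and by the coupling \eqref{eq:coupling} we have $h_j=\sqrt{\rho_j/(2LP)}$ for every $j\in[1,n]$. The identities and monotonicity estimates collected in Lemma~\ref{lem:sigma} will reduce all of the relevant ratios to explicit powers of $2$ and $4$.

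For the case $j^\star=0$, the subdivision alters $\rho_0$ alone (it leaves $\bm h$ and $\bm t$ fixed), so only $\mc{C}_0$ changes. Using $\min(\rho_0,\rho_1)=\rho_0$ together with the identity $\sigma(h_1,\rho_0/4)=\sigma(h_1,\rho_0)/4$ from \eqref{eq:sig:rholinear}, a direct computation yields $\mc{C}_0^{\mathrm{new}}=4^{d_R+d_F}\mc{C}_0$ and hence $\Delta C(\bm\delta;v;0)=(4^{d_R+d_F}-1)\mc{C}_0$. The required lower bound for $\mc{C}_0$ follows by combining $\sigma(h_1,\rho_0)\ge\tfrac{7}{4}\rho_0$ with $h_1^{d_F}\ge(\rho_0/(2LP))^{d_F/2}$, where the latter uses the coupling at index $1$ and the fact that $\rho_1\ge\rho_0$. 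This gives the desired inequality with an explicit positive constant times $v_{\min}\rho_0^{-d_R-d_F/2}$.

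For the case $j^\star\in[1,n]$, the new summands with $k\le j^\star-2$ coincide with the corresponding old ones, and those with $k\ge j^\star+2$ coincide with the shifted old terms $\mc{C}_{k-1}$, so
\[\Delta C(\bm\delta;v;j^\star)=\bigl(\mc{C}^{\mathrm{new}}_{j^\star-1}-\mc{C}_{j^\star-1}\bigr)+\mc{C}^{\mathrm{new}}_{j^\star}+\bigl(\mc{C}^{\mathrm{new}}_{j^\star+1}-\mc{C}_{j^\star}\bigr),\]
with the last parenthesis absent when $j^\star=n$. Using minimality of $\rho_{j^\star}$ to collapse every $\min$ factor and applying the identities $\sigma(h,\rho/4)=\sigma(h,\rho)/4$ and $\sigma(h/2,\rho)\ge\tfrac12\sigma(h,\rho)$ from Lemma~\ref{lem:sigma}, a direct computation yields
\[\frac{\mc{C}^{\mathrm{new}}_{j^\star-1}}{\mc{C}_{j^\star-1}}\ge 2^{d_F+1}\ge 1\quad\text{and}\quad\frac{\mc{C}^{\mathrm{new}}_{j^\star+1}}{\mc{C}_{j^\star}}=4^{d_R+d_F}\ge 1,\]
so the two parenthesized differences are nonnegative and $\Delta C\ge\mc{C}^{\mathrm{new}}_{j^\star}$. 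A lower bound for $\mc{C}^{\mathrm{new}}_{j^\star}$ is then obtained exactly as in the previous case, now using $\sigma(h_{j^\star}/2,\rho_{j^\star}/4)\ge\tfrac{7}{16}\rho_{j^\star}$ and the coupling $h_{j^\star}^{d_F}=(\rho_{j^\star}/(2LP))^{d_F/2}$.

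The main obstacle will be the careful bookkeeping in the case $j^\star\in[1,n]$: identifying exactly which summands change under the index-extending subdivision rule and verifying that both replacement differences are nonnegative so that $\mc{C}^{\mathrm{new}}_{j^\star}$ alone carries the required $\rho_{j^\star}^{-d_R-d_F/2}$ growth. Once this is secured, choosing $c>0$ as the minimum of the two constants produced by the two cases (each being a positive multiple of $v_{\min}$ and independent of $n$ and $\bm\delta$) completes the proof of \eqref{eq:P3Inequality}. Property \ref{property:CostIncreasesOnMin} then follows immediately because the right-hand side of \eqref{eq:P3Inequality} is strictly positive.
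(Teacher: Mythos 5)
Your proposal is correct and follows essentially the same route as the paper's proof: the same case split on whether the minimizing index is $0$ or in $[1,n]$, the same bookkeeping of which summands $\mc{C}_k$ change under $\psi$, the same use of $\sigma(h,\rho/4)=\sigma(h,\rho)/4$, $\sigma(h/2,\rho)\ge\tfrac12\sigma(h,\rho)$, $\sigma\ge\tfrac74\rho$ and the coupling \eqref{eq:coupling}, and the same resulting constants. If anything, your handling of the endpoint subdivision index is slightly more carefully stated than the paper's Case 3.
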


\begin{proof}
Defining
\[V_L:=\inf_{t\in[0,T]}v(t)>0,\]
we will verify statement \eqref{eq:P3Inequality} with
\[
c:=\min(7\tfrac{(4^{d_R+d_F}-1)V_L}{(2LP)^{d_F/2}},\tfrac{2^{2d_R+d_F}7V_L}{(2LP)^{d_F/2}}).
\]
Take 
\begin{equation}\label{minimal:k}
k:=\argmin_i\rho_i.
\end{equation}

\textit{Case 1:} When $k=0$, comparing terms in the sum \eqref{def:C}, using \eqref{eq:sig:rholinear} twice, along with \eqref{eq:coupling} and \eqref{minimal:k} yields
\begin{equation*}
\begin{aligned}
&\Delta C({\bm{\delta}};v;0)=C(\psi[{\bm{\delta}};0];v)-C({\bm{\delta}};v)
=\mc{C}_0(\psi[{\bm{\delta}};0];v)-\mc{C}_0({\bm{\delta}};v)\\
&=\tfrac{4v(0)h_1^{d_F}}
{(\frac{\rho_0}{4})^{d_R}\min(\frac{\rho_0}{4},\rho_1)^{d_F+1}}
\sigma(h_1,\tfrac{\rho_0}{4})
-\tfrac{4v(0)h_1^{d_F}}
{\rho_0^{d_R}\min(\rho_0,\rho_1)^{d_F+1}}
\sigma(h_1,\rho_0)\\
&=4v(0)h_1^{d_F}
\left(\tfrac{\sigma(h_1,\frac{\rho_0}{4})}
{(\frac{\rho_0}{4})^{d_R}(\frac{\rho_0}{4})^{d_F+1}}
-\tfrac{\sigma(h_1,\rho_0)}{(\rho_0)^{d_R}(\rho_0)^{d_F+1}}\right)\\
&=\tfrac{4v(0)h_1^{d_F}}{\rho_0^{d_R+d_F+1}}\left(4^{d_R+d_F+1}\sigma(h_1,\tfrac{\rho_0}{4})-\sigma(h_1,\rho_0)\right)\\
&=\tfrac{4v(0)h_1^{d_F}}{\rho_0^{d_R+d_F+1}}(4^{d_R+d_F}-1)\sigma(h_1,\rho_0)
\ge7v(0)h_1^{d_F}(4^{d_R+d_F}-1)\rho_0^{-d_R-d_F}\\
&=7\tfrac{(4^{d_R+d_F}-1)V_L}{(2LP)^{d_F/2}}\rho_1^{d_F/2}\rho_0^{-d_R-d_F}\ge7\tfrac{(4^{d_R+d_F}-1)V_L}{(2LP)^{d_F/2}}\rho_0^{-d_R-d_F/2}.
\end{aligned}\end{equation*}

\textit{Case 2:} 
When $k\in[1,n-2]$, comparing terms in the sum \eqref{def:C} gives
\begin{align*}
&\Delta C({\bm{\delta}};v;k)
=\sum_{i=0}^{n-1}\mc{C}_i(\psi[{\bm{\delta}};k];v)
-\sum_{i=0}^{n-1}\mc{C}_i({\bm{\delta}};v)\notag\\
&=\mc{C}_{k-1}(\psi[{\bm{\delta}};k];v)
+\mc{C}_k(\psi[{\bm{\delta}};k];v)
+\mc{C}_{k+1}(\psi[{\bm{\delta}};k];v)
-\mc{C}_{k-1}({\bm{\delta}};v)-\mc{C}_k({\bm{\delta}};v).
\end{align*}
Using \eqref{minimal:k}, rearranging, and using \eqref{eq:sig:hdiff}, we obtain
\begin{equation}\label{eq:P3:k-1}\begin{aligned}
&C_{k-1}(\psi[{\bm{\delta}};k];v)-C_{k-1}({\bm{\delta}};v)\\
&=\tfrac{4v(t_{k-1})(\frac{h_k}{2})^{d_F}}
{\rho_{k-1}^{d_R}\min(\rho_{k-1},\frac{\rho_k}{4})^{d_F+1}}
\sigma(\tfrac{h_k}{2},\rho_{k-1})
-\tfrac{4v(t_{k-1})h_k^{d_F}}
{\rho_{k-1}^{d_R}\min(\rho_{k-1},\rho_k)^{d_F+1}}
\sigma(h_k,\rho_{k-1})\\   
&=\tfrac{4v(t_{k-1})(\frac{h_k}{2})^{d_F}}{\rho_{k-1}^{d_R}(\frac{\rho_k}{4})^{d_F+1}}
\sigma(\tfrac{h_k}{2},\rho_{k-1})
-\tfrac{4v(t_{k-1})h_k^{d_F}}{\rho_{k-1}^{d_R}\rho_k^{d_F+1}}
\sigma(h_k,\rho_{k-1})\\   
&=\tfrac{4v(t_{k-1})h_k^{d_F}}{\rho_{k-1}^{d_R}\rho_k^{d_F+1}}
\left(2^{d_F+2}\sigma(\tfrac{h_k}{2},\rho_{k-1})-\sigma(h_k,\rho_{k-1})\right)\\
&\ge\tfrac{4v(t_{k-1})h_k^{d_F}}{\rho_{k-1}^{d_R}\rho_k^{d_F+1}}
(2^{d_F+1}-1)\sigma(h_k,\rho_{k-1})>0.
\end{aligned}\end{equation}
Similarly, using \eqref{minimal:k}, rearranging, and using \eqref{eq:sig:rholinear} 
provides
\begin{equation*}
\begin{aligned}
&C_{k+1}(\psi[{\bm{\delta}};k];v)-C_k({\bm{\delta}};v)\\
&=\tfrac{4v(t_k)h_{k+1}^{d_F}}
{(\frac{\rho_k}{4})^{d_R}\min(\frac{\rho_k}{4},\rho_{k+1})^{d_F+1}}
\sigma(h_{k+1},\tfrac{\rho_k}{4})
-\tfrac{4v(t_k)h_{k+1}^{d_F}}{\rho_k^{d_R}\min(\rho_k,\rho_{k+1})^{d_F+1}}
\sigma(h_{k+1},\rho_k)\\
&=\tfrac{4v(t_k)h_{k+1}^{d_F}}{(\frac{\rho_k}{4})^{d_R}(\frac{\rho_k}{4})^{d_F+1}}
\sigma(h_{k+1},\tfrac{\rho_k}{4})
-\tfrac{4v(t_k)h_{k+1}^{d_F}}{\rho_k^{d_R}\rho_k^{d_F+1}}
\sigma(h_{k+1},\rho_k)\\
&=\tfrac{4v(t_k)h_{k+1}^{d_F}}{\rho_k^{d_R+d_F+1}}
\left(4^{d_R+d_F+1}\sigma(h_{k+1},\tfrac{\rho_k}{4})-\sigma(h_{k+1},\rho_k)\right)\\
&=\tfrac{4v(t_k)h_{k+1}^{d_F}}
{\rho_k^{d_R+d_F+1}}(4^{d_R+d_F}-1)\sigma(h_{k+1},\rho_k)>0.
\end{aligned}\end{equation*}
Finally, elementary computations in combination with \eqref{eq:sig:rholinear}
and \eqref{eq:coupling} yield
\begin{equation}\label{eq:P3:k}\begin{aligned}
&C_k(\psi[{\bm{\delta}};k];v)
=\tfrac{4v(t_k-\frac{h_k}{2})(\frac{h_k}{2})^{d_F}}
{(\frac{\rho_k}{4})^{d_R}\min(\frac{\rho_k}{4},\frac{\rho_k}{4})^{d_F+1}}
\sigma(\tfrac{h_k}{2},\tfrac{\rho_k}{4})\\
&=2^{2d_R+d_F+4}
\tfrac{v(t_k-\frac{h_k}{2})h_k^{d_F}}{\rho_k^{d_R+d_F+1}}
\sigma(\tfrac{h_k}{2},\tfrac{\rho_k}{4})\\
&\ge2^{2d_R+d_F}
\tfrac{7v(t_k-\frac{h_k}{2})h_k^{d_F}}{\rho_k^{d_R+d_F}}
\ge\tfrac{2^{2d_R+d_F}7V_L}{(2LP)^{d_F/2}}\rho_k^{-d_R-d_F/2}.
\end{aligned}\end{equation}
Combining equations \eqref{eq:P3:k-1} - \eqref{eq:P3:k} 
implies \eqref{eq:P3Inequality}. 

\textit{Case 3:} If $k=n-1$, we have
\begin{equation*}
\Delta C({\bm{\delta}};v;k)
=\mc{C}_{k-1}(\psi[{\bm{\delta}};k];v)+\mc{C}_k(\psi[{\bm{\delta}};k];v)-\mc{C}_{k-1}({\bm{\delta}};v).
\end{equation*}
Equations \eqref{eq:P3:k-1} and \eqref{eq:P3:k} apply for the same reasons 
as in case 2, and so \eqref{eq:P3Inequality} holds. 
\end{proof}

\begin{lemma}\label{Lem:property:DeltaCFracBound}
Let $v\in C^0([0,T],\R_{>0})$.
Then
the function $C(\,\cdot\,;v)$ from \eqref{def:C} satisfies property \ref{property:CostFracNotZero}, with $\psi$ given by \eqref{subdivision:rule:full}.
\end{lemma}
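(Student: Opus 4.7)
My approach would be to exhibit a constant $K>0$ such that
\[
\Delta C({\bm{\delta}};v;j)\;\le\;K\rho_j^{-1}(\min_i\rho_i)^{-d_R-d_F/2}
\]
for every $n\in\N_1$, every ${\bm{\delta}}\in\aleph_n$ and every $j\in[0,n]$. Combined with the lower bound $\Delta C({\bm{\delta}};v;\arg\min_i\rho_i)\ge c(\min_i\rho_i)^{-d_R-d_F/2}$ from Lemma \ref{Lem:property:DeltaCMinRho}, this immediately yields property \ref{property:CostFracNotZero} with the continuous, strictly positive function $\omega_4(\rho):=K/(c\rho)$.

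To obtain the upper bound, I would decompose $\Delta C({\bm{\delta}};v;j)$ into the three summands $\mc{C}_{j-1}(\psi[{\bm{\delta}};j];v)-\mc{C}_{j-1}({\bm{\delta}};v)$, $\mc{C}_j(\psi[{\bm{\delta}};j];v)$ and $\mc{C}_{j+1}(\psi[{\bm{\delta}};j];v)-\mc{C}_j({\bm{\delta}};v)$ (with shorter, analogous expressions for the endpoint cases $j=0$ and $j=n$), following the case split used in the proof of Lemma \ref{Lem:property:DeltaCMinRho}. Each summand would then be estimated using (a) the uniform bound $\sigma(h,\rho)\le\bar\sigma$ from Lemma \ref{lem:sigma}, (b) the upper bound $v\le V_H:=\sup_{t\in[0,T]}v(t)<\infty$, (c) the identity $\sigma(h,\rho/4)=\sigma(h,\rho)/4$ from \eqref{eq:sig:rholinear} together with the monotonicity of $\sigma$, and crucially (d) the coupling \eqref{eq:coupling}, which rewrites each factor $h_k^{d_F}$ as $\rho_k^{d_F/2}/(2LP)^{d_F/2}$. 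Together these reduce every contribution to a monomial in $\rho_{j-1}$, $\rho_j$ and $\rho_{j+1}$ whose exponents are then controlled by the subsequent case analysis.

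The main obstacle is the case analysis induced by the constraint $\rho_{j\pm 1}/\rho_j\in\{4^\ell:\ell\in\Z\}$ from \eqref{eq:rhofracs}, which determines which argument of each $\min(\rho_k,\rho_{k+1})$ in the denominators is binding. The hardest subcase arises when a neighbor, say $\rho_{j+1}$, is strictly smaller than $\rho_j$ and possibly equal to $\min_i\rho_i$, because then $\mc{C}_{j+1}(\psi[{\bm{\delta}};j];v)$ produces a potentially divergent factor $\rho_{j+1}^{-(d_F+1)}$. To absorb it I would apply $(\min_i\rho_i)^{d_R+d_F/2}\le\rho_{j+1}^{d_R+d_F/2}$ (valid because $\min_i\rho_i\le\rho_{j+1}$) and then invoke the standing assumption $d_R\ge 1$ to ensure the leftover exponent $d_R-1$ on $\rho_{j+1}$ is non-negative, whence $\rho_{j+1}^{d_R-1}\le\rho_j^{d_R-1}$ collapses the bound to the desired $K\rho_j^{-1}$ multiple of $(\min_i\rho_i)^{-d_R-d_F/2}$. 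The symmetric argument handles the $\mc{C}_{j-1}$ contribution, and the middle term $\mc{C}_j(\psi[{\bm{\delta}};j];v)$ is bounded directly by $K'\rho_j^{-d_R-d_F/2-1}$ using the coupling, which reduces to the same form via $(\min_i\rho_i)^{d_R+d_F/2}\le\rho_j^{d_R+d_F/2}$. Summing the three estimates delivers the uniform bound announced above.
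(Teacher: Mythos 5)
Your decomposition and toolkit coincide with the paper's proof, but two of the steps you describe would fail as written. First, the announced uniform bound $\Delta C({\bm{\delta}};v;j)\le K\rho_j^{-1}(\min_i\rho_i)^{-d_R-d_F/2}$ is too strong when $d_F\ge1$. In the subcases where the mesh size entering the numerator through the coupling \eqref{eq:coupling} belongs to a \emph{different} node than the one in the denominator --- e.g.\ for $\mc{C}_{j+1}(\psi[{\bm{\delta}};j];v)$ with $\rho_{j+1}\ge\rho_j/4$, where the coupling turns $h_{j+1}^{d_F}$ into $(\rho_{j+1}/(2LP))^{d_F/2}$ while the denominator contains only powers of $\rho_j$ --- the ratio $\rho_{j+1}^{d_F/2}/\rho_j^{d_F/2}$ is unbounded over $\aleph$ (take $\rho_{j+1}=4^m\rho_j$ with $m$ large), so no constant $K$ works. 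The remedy, which the paper uses, is to bound $h_{j+1}\le T$ in exactly these subcases instead of invoking the coupling; this costs an extra factor $\rho_j^{-d_F/2}$, and the resulting $\omega_4(s)$ contains terms of order $s^{-d_F/2-1}$ rather than $s^{-1}$. Since \ref{property:CostFracNotZero} only asks for \emph{some} $\omega_4\in C^0(\R_{>0},\R_{>0})$, this repair is harmless.

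Second, and more seriously, the ``symmetric argument'' for the $\mc{C}_{j-1}$ contribution does not go through. When $\rho_{j-1}<\rho_j$ (hence $\rho_{j-1}\le\rho_j/4$ by \eqref{eq:rhofracs}), dropping the subtracted term $\mc{C}_{j-1}({\bm{\delta}};v)$ and absorbing the divergent factor $\rho_{j-1}^{-(d_F+1)}$ fails: unlike the right neighbour, the relevant stepsize here is $h_j$, whose coupling produces $\rho_j^{d_F/2}$, not $\rho_{j-1}^{d_F/2}$, so nothing cancels the divergence, and $\rho_j^{d_F/2}/\rho_{j-1}^{d_R+d_F+1}$ cannot be dominated by any function of $\rho_j$ times $(\min_i\rho_i)^{-d_R-d_F/2}$ (let $\rho_{j-1}=\min_i\rho_i\to0$ with $\rho_j$ fixed and compare exponents). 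The missing idea is cancellation rather than absorption: in this subcase both minima in $\mc{C}_{j-1}(\psi[{\bm{\delta}};j];v)$ and $\mc{C}_{j-1}({\bm{\delta}};v)$ equal $\rho_{j-1}$, and since $(h/2)^{d_F}\sigma(h/2,\rho)\le h^{d_F}\sigma(h,\rho)$ by the monotonicity of $\sigma$ in its first argument, the difference is nonpositive and needs no upper bound at all; this is precisely estimate \eqref{eq:P4:j-1:first} in the paper. With these two repairs your argument becomes the paper's proof.
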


\begin{proof}
Once again, we denote
\begin{equation}\label{again:k:minimal}
k:=\argmin_{i\in[0,n]}\rho_i.
\end{equation}
Defining
\[V_U:=\sup_{t\in[0,T]}v(t)\in\R_{>0},\]
we aim to show that 
\begin{equation}\label{the:goal}
\Delta C({\bm{\delta}};v;j)\le\rho_k^{-d_R-d_F/2}
\big(
2\tfrac{4^{d_R+d_F+2}\bar\sigma V_UT^{d_F}}{\rho_j^{d_F/2+1}}
+\tfrac{4^{d_R+1}\bar\sigma V_U}{(2LP)^{d_F/2}\rho_j}
+\tfrac{4^{d_F+2}\bar\sigma V_UT^{d_F}}{\rho_j^{d_F/2+1}}
\big)
\end{equation}
holds for all $j\in[0,n]$.
Combining \eqref{the:goal} with Lemma \ref{Lem:property:DeltaCMinRho}, 
which states that there exists $c>0$ such that

\[c\rho_k^{-d_R-d_F/2}\le\Delta C({\bm{\delta}};v;k),\]
then yields property \ref{property:CostFracNotZero} with
\[\omega_4(s):=\tfrac{1}{c}
\big(
2\tfrac{4^{d_R+d_F+2}\bar\sigma V_UT^{d_F}}{s^{d_F/2+1}}
+\tfrac{4^{d_R+1}\bar\sigma V_U}{(2LP)^{d_F/2}s}
+\tfrac{4^{d_F+2}\bar\sigma V_UT^{d_F}}{s^{d_F/2+1}}
\big).\]
We proceed to verify statement \eqref{the:goal}.

\medskip

\textit{Case 1:} 
When $j=0$, comparing terms in the sum \eqref{def:C},
and using \eqref{sigma:bar} and \eqref{again:k:minimal} yields
\begin{equation*}
\begin{aligned}
&\Delta C({\bm{\delta}};v;0)
=\mc{C}_{0}(\psi[{\bm{\delta}};0];v)-\mc{C}_0({\bm{\delta}};v)
\le\mc{C}_{0}(\psi[{\bm{\delta}};0];v)\\
&=\tfrac{4v(0)h_1^{d_F}}
{(\frac{\rho_0}{4})^{d_R}\min(\frac{\rho_0}{4},\rho_1)^{d_F+1}}
\sigma(h_1,\tfrac{\rho_0}{4})
\le\tfrac{4\bar\sigma V_Uh_1^{d_F}}
{(\frac{\rho_0}{4})^{d_R}\min(\frac{\rho_0}{4},\rho_1)^{d_F+1}}\\
&=\begin{cases}
\tfrac{4^{d_R+d_F+2}\bar\sigma V_Uh_1^{d_F}}{\rho_0^{d_R+d_F+1}},
&\frac{\rho_0}{4}\le\rho_1,\\ 
\tfrac{4^{d_R+1}\bar\sigma V_Uh_1^{d_F}}
{\rho_0^{d_R}\rho_1^{d_F+1}},
&\frac{\rho_0}{4}>\rho_1
\end{cases}
\le\begin{cases}
\tfrac{4^{d_R+d_F+2}\bar\sigma V_UT^{d_F}}{\rho_0^{d_F/2+1}\rho_k^{d_R+d_F/2}},
&\frac{\rho_0}{4}\le\rho_1,\\ 
\tfrac{4^{d_R+1}\bar\sigma V_U}{(2LP)^{d_F/2}\rho_0\rho_k^{d_R+d_F/2}},
&\frac{\rho_0}{4}>\rho_1,
\end{cases}
\end{aligned}\end{equation*}
where the last inequality in the case $\frac{\rho_0}{4}>\rho_1$ is derived from
\eqref{eq:coupling} and \eqref{again:k:minimal} by the computation
\begin{align*}
\tfrac{4^{d_R+1}\bar\sigma V_Uh_1^{d_F}}{\rho_0^{d_R}\rho_1^{d_F+1}}
=\tfrac{4^{d_R+1}\bar\sigma V_U}{(2LP)^{d_F/2}\rho_0^{d_R}\rho_1^{d_F/2+1}}
\le\tfrac{4^{d_R+1}\bar\sigma V_U}{(2LP)^{d_F/2}\rho_0\rho_k^{d_R+d_F/2}}.
\end{align*}

\textit{Case 2:}
When $j\in[1,n-1]$, comparing terms in the sum \eqref{def:C} yields
\begin{align*}
\Delta C({\bm{\delta}};v;j)
&=\mc{C}_{j-1}(\psi[{\bm{\delta}};j];v)+\mc{C}_j(\psi[{\bm{\delta}};j];v)\\
&\quad+\mc{C}_{j+1}(\psi[{\bm{\delta}};j];v)
-\mc{C}_{j-1}({\bm{\delta}};v)-\mc{C}_j({\bm{\delta}};v).
\end{align*}
If $\rho_{j-1}\le\frac{\rho_j}{4}$, then Lemma \ref{lem:sigma} yields 
\begin{equation}\label{eq:P4:j-1:first}\begin{aligned}
&\mc{C}_{j-1}(\psi[{\bm{\delta}};j];v)-\mc{C}_{j-1}({\bm{\delta}};v)\\
&=\tfrac{4v(t_{j-1})(\frac{h_j}{2})^{d_F}}
{\rho_{j-1}^{d_R}\min(\rho_{j-1},\frac{\rho_j}{4})^{d_F+1}}
\sigma(\tfrac{h_j}{2},\rho_{j-1})
-\tfrac{4v(t_{j-1})h_j^{d_F}}
{\rho_{j-1}^{d_R}\min(\rho_{j-1},\rho_j)^{d_F+1}}
\sigma(h_j,\rho_{j-1})\\
&\le\tfrac{4v(t_{j-1})(\frac{h_j}{2})^{d_F}}
{\rho_{j-1}^{d_R+d_F+1}}
\sigma(h_j,\rho_{j-1})
-\tfrac{4v(t_{j-1})h_j^{d_F}}
{\rho_{j-1}^{d_R+d_F+1}}
\sigma(h_j,\rho_{j-1})\le0.
\end{aligned}\end{equation}
If, on the other hand, we have $\rho_{j-1}>\frac{\rho_j}{4}$, then \eqref{sigma:bar}
and \eqref{again:k:minimal} yield
\begin{equation}\label{eq:P4:j-1:second}\begin{aligned}
&\mc{C}_{j-1}(\psi[{\bm{\delta}};j];v)-\mc{C}_{j-1}({\bm{\delta}};v)
\le\mc{C}_{j-1}(\psi[{\bm{\delta}};j];v)\\
&=\tfrac{4v(t_{j-1})(\frac{h_j}{2})^{d_F}}
{\rho_{j-1}^{d_R}\min(\rho_{j-1},\frac{\rho_j}{4})^{d_F+1}}
\sigma(\tfrac{h_j}{2},\rho_{j-1})
\le\tfrac{4^{d_F+2}\bar\sigma V_UT^{d_F}}
{\rho_{j-1}^{d_R}\rho_j^{d_F+1}}
\le\tfrac{4^{d_F+2}\bar\sigma V_UT^{d_F}}
{\rho_j^{d_F/2+1}\rho_k^{d_R+d_F/2}}.
\end{aligned}\end{equation}
Again by \eqref{sigma:bar} and \eqref{again:k:minimal}, we obtain
\begin{equation}\label{eq:P4:j}\begin{aligned}
\mc{C}_{j}(\psi[{\bm{\delta}};j];v)
&=\tfrac{4v(t_{j-1}+\frac{h_j}{2})(\frac{h_j}{2})^{d_F}}
{(\frac{\rho_j}{4})^{d_R}\min(\frac{\rho_j}{4},\frac{\rho_j}{4})^{d_F+1}}
\sigma(\tfrac{h_j}{2},\tfrac{\rho_j}{4})\\
&\le\tfrac{4\bar{\sigma}V_UT^{d_F}}
{(\frac{\rho_j}{4})^{d_R+d_F+1}}
\le\tfrac{4^{d_R+d_F+2}\bar{\sigma}V_UT^{d_F}}
{\rho_j^{d_R+d_F+1}}
\le\tfrac{4^{d_R+d_F+2}\bar{\sigma}V_UT^{d_F}}
{\rho_j^{d_F/2+1}\rho_k^{d_R+d_F/2}}.
\end{aligned}\end{equation}
If $\frac{\rho_j}{4}\le\rho_{j+1}$, then \eqref{sigma:bar} 
and \eqref{again:k:minimal} give
\begin{equation*}
\begin{aligned}
&\mc{C}_{j+1}(\psi[{\bm{\delta}};j];v)-\mc{C}_j({\bm{\delta}};v)\le \mc{C}_{j+1}(\psi[{\bm{\delta}};j];v)\\
&=\tfrac{4v(t_j)h_{j+1}^{d_F}}
{(\frac{\rho_j}{4})^{d_R}\min(\frac{\rho_j}{4},\rho_{j+1})^{d_F+1}}
\sigma(h_{j+1},\tfrac{\rho_j}{4})\\
&\le\tfrac{4^{d_R+d_F+2}\bar{\sigma}V_UT^{d_F}}
{\rho_j^{d_R+d_F+1}}
\le\tfrac{4^{d_R+d_F+2}\bar{\sigma}V_UT^{d_F}}
{\rho_j^{d_F/2+1}\rho_k^{d_R+d_F/2}}.
\end{aligned}\end{equation*}
If, on the other hand, we have $\frac{\rho_j}{4}>\rho_{j+1}$, then \eqref{sigma:bar} 
and \eqref{again:k:minimal} yield
\begin{equation*}
\begin{aligned}
&\mc{C}_{j+1}(\psi[{\bm{\delta}};j];v)-\mc{C}_j({\bm{\delta}};v)\le \mc{C}_{j+1}(\psi[{\bm{\delta}};j];v)\\
&=\tfrac{4v(t_j)h_{j+1}^{d_F}}
{(\frac{\rho_j}{4})^{d_R}\min(\frac{\rho_j}{4},\rho_{j+1})^{d_F+1}}
\sigma(h_{j+1},\tfrac{\rho_j}{4})
\le\tfrac{4^{d_R}\bar{\sigma}V_Uh_{j+1}^{d_F}}
{\rho_j^{d_R}\rho_{j+1}^{d_F+1}}\\
&=\tfrac{4^{d_R}\bar{\sigma}V_U}
{(2LP)^{d_F/2}\rho_j^{d_R}\rho_{j+1}^{d_F/2+1}}
\le\tfrac{4^{d_R}\bar{\sigma}V_U}
{(2LP)^{d_F/2}\rho_j\rho_k^{d_R+d_F/2}}.
\end{aligned}\end{equation*}

\textit{Case 3:}
When $j=n$, comparing terms in the sum \eqref{def:C} yields
\begin{align*}
\Delta C({\bm{\delta}};v;n)
&=\mc{C}_{n-1}(\psi[{\bm{\delta}};n];v)+\mc{C}_{n}(\psi[{\bm{\delta}};n];v)
-\mc{C}_{n-1}({\bm{\delta}};v).
\end{align*}
Statements \eqref{eq:P4:j-1:first}, \eqref{eq:P4:j-1:second} and \eqref{eq:P4:j}
hold with $n$ in lieu of $j$ for the same reasons as in case 2.
\end{proof}

We summarize the results of this section.

\begin{theorem}
For any $\ell_{\max}\in\N_1$ and 
$\eps_1>\eps_2>\ldots>\eps_{\ell_{\max}}>0$,
Algorithm \ref{Alg:IterativeMethod} terminates in finite time and 
returns a 
${\bm{\delta}}\in\aleph$ with
$E({\bm{\delta}})\le\eps_{\ell_{\max}}$
and the corresponding reachable sets generated by recursion \eqref{recursion:in:bd}.
\end{theorem}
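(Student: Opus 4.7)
The plan is to reduce the statement to Theorem \ref{Thm:GeneralCvgc}, which will be applied once per pass of the outer loop of Algorithm \ref{Alg:IterativeMethod}. That loop executes exactly $\ell_{\max}+1$ times, and each pass consists of one call to Algorithm \ref{Alg:boundary:tracking} (a for loop of length $n_\ell$ built from finitely many finite-state operations on digital images) together with, for $\ell<\ell_{\max}$, a single call to Algorithm \ref{Alg:RefineDisc}. Hence the whole procedure terminates in finite time iff every invocation of Algorithm \ref{Alg:RefineDisc} does.

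For the $\ell$-th such invocation, Theorem \ref{Thm:GeneralCvgc} applies provided the pair $(E,\,C(\,\cdot\,;v^{(\ell)}))$ satisfies properties \ref{property:rhotozeroerr}--\ref{property:CostFracNotZero} on $\aleph$. The lemmas of Section \ref{sec:verification} establish precisely these four properties for the pair $(E,\,C(\,\cdot\,;v))$ whenever $v\in C^0([0,T],\R_{>0})$, so the critical point is to verify that the product spline $v^{(\ell)}=v_R^{(\ell)}v_F^{(\ell)}$ constructed inside Algorithm \ref{Alg:boundary:tracking} lies in this space.

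This verification is short but is really the crux of the argument. The interpolation nodes $\hat v_{R,k}^{(\ell)}$ and $\hat v_{F,k}^{(\ell)}$ are products of strictly positive numerical factors with the integer counts $\hat c_{R,k}$ and $\hat c_{F,k}$. The count $\hat c_{R,k}\ge 1$ because $\mc{R}_{{\bm{\delta}}^{(\ell)}}^F(k)\in C_{\rho_k^{(\ell)}}$ is nonempty (by Lemma \ref{projection:properties} and Corollary \ref{Phi:stays:in:C}), so its outer boundary layer $\partial_{\rho_k^{(\ell)}}^{0}\mc{R}_{{\bm{\delta}}^{(\ell)}}^F(k)$ is nonempty as well; and $\hat c_{F,k}\ge 1$ because each $\Phi_{h_{k+1},\tilde\rho_{k+1}}^{\partial,\alpha_{k+1}}(x)$ is nonempty. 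Piecewise linear interpolation of strictly positive values is continuous and strictly positive on $[0,T]$, so $v_R^{(\ell)},v_F^{(\ell)},v^{(\ell)}\in C^0([0,T],\R_{>0})$.

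Combining these observations, Theorem \ref{Thm:GeneralCvgc} yields ${\bm{\delta}}^{(\ell+1)}\in\aleph_{n_{\ell+1}}$ with $E({\bm{\delta}}^{(\ell+1)})\le\eps_{\ell+1}$ after finitely many refinement steps, for each $\ell\in[0,\ell_{\max}-1]$. Instantiated at $\ell=\ell_{\max}-1$ this gives $E({\bm{\delta}}^{(\ell_{\max})})\le\eps_{\ell_{\max}}$, and the final pass of the outer loop uses recursion \eqref{recursion:in:bd} to produce the associated reachable sets. The main conceptual subtlety is that Theorem \ref{Thm:GeneralCvgc} is invoked with a cost functional whose weight $v^{(\ell)}$ changes between outer iterations; however, that theorem depends on the cost functional only through the abstract properties \ref{property:CostIncreasesOnMin} and \ref{property:CostFracNotZero}, which, by Lemmas \ref{Lem:property:DeltaCMinRho} and \ref{Lem:property:DeltaCFracBound}, hold uniformly for any continuous positive weight, so no circularity arises.
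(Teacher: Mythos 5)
Your proposal is correct and follows essentially the same route as the paper's proof: reduce each invocation of Algorithm~\ref{Alg:RefineDisc} to Theorem~\ref{Thm:GeneralCvgc}, verify properties \ref{property:rhotozeroerr}--\ref{property:CostFracNotZero} via the lemmas of Section~\ref{sec:verification}, observe that the only non-automatic hypothesis is strict positivity of the spline $v^{(\ell)}$ (which you justify in more detail than the paper does), and induct over $\ell$. The only step the paper includes that you omit is the easy verification that the initial discretization ${\bm{\delta}}^{(0)}$ constructed in Algorithm~\ref{Alg:IterativeMethod} actually lies in $\aleph_{n_0}$, which is needed to start the induction.
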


\begin{proof}
Algorithm \ref{Alg:IterativeMethod} is initialized with the values 
$n_0:=\lceil 4LT\rceil$,
$\bm{h}^{(0)}:=\tfrac{T}{n_0}\mathbbm{1}_{n_0}$,
$\bm{t}^{(0)}:=\Sigma_+\bm{h}^{(0)}$
and
$\bm{\rho}^{(0)}:=(2LPT^2/n_0^2)\mathbbm{1}_{n_0+1}$.
It is easy to verify that we have ${\bm{\delta}}^{(0)}:=(\bm{h}^{(0)},\bm{t}^{(0)},\bm{\rho}^{(0)})\in\aleph_{n_0}$.

Assume that Algorithm \ref{Alg:IterativeMethod} has generated 
$n_\ell\in\N_1$ and ${\bm{\delta}}^{(\ell)}\in\aleph_{n_\ell}$ for some 
$\ell\in[0,\ell_{\max}-1]$. 
By Lemma \ref{projection:properties}, the computation in line \ref{algl:CalcInitBdry} of Algorithm 
\ref{Alg:IterativeMethod} yields a pair $(D_0^0,D_1^0)\in \bdc_{\rho_0}$, 
which is a valid input for Algorithm \ref{Alg:boundary:tracking}.
In view of Algorithm \ref{Alg:boundary:tracking}, the product 
$v^{(\ell)}:=v_R^{(\ell)}v_F^{(\ell)}$ satisfies $\min_{t\in[0,T]}v^{(\ell)}(t)>0$ for all $t\in[0,T]$,
so by Lemmas \ref{Lem:property:ErrorUpperBd} - \ref{Lem:property:DeltaCFracBound} 
the functions $C(\,\cdot\,;v^{(\ell)}):\aleph\to\R_{>0}$ and $E:\aleph\to\R_{>0}$ from \eqref{def:C} and \eqref{eq:errdef}
satisfy properties \ref{property:rhotozeroerr}, \ref{property:ErrDecreases2}, 
\ref{property:CostIncreasesOnMin}, and \ref{property:CostFracNotZero}. 
Hence Theorem \ref{Thm:GeneralCvgc} guarantees that Algorithm \ref {Alg:RefineDisc}
terminates in finite time and computes ${\bm{\delta}}^{(\ell+1)}\in\aleph_{n_{\ell+1}}$
satisfying $E({\bm{\delta}}^{(\ell+1)})\le\eps_{\ell+1}$.

Now the statement of the theorem follows by induction.
\end{proof}

\section{Numerical examples}\label{sec:numerical:examples}

%--- to go on one page ------------------------------------------------------
\begin{table}[p]\centering
\begin{tabular}{|l|c|c|c|c|c|} \hline 
$\boldsymbol{\ell}$&\textbf{7}&\textbf{8}&\textbf{9}&\textbf{10}&\textbf{11} \\ 
\hline
$\eps_\ell$  & 3.2E1 & 1.6E1 & 8.0E0 & 4.0E0 & 2.0E0 \\
\hline
Relative error & 5.9E-1 & 2.9E-1 & 1.5E-1 & 7.3E-2 & 3.7E-2\\
\hline
\begin{tabular}[c]{@{}l@{}}
Algorithm \ref{Alg:BdryEulrStep}\\
line \ref{use:theorem:in:alg:1}
\end{tabular} & 1.90E-1 & 1.99E0 & 2.41E1 & 3.70E2 &9.53E3 \\ 
\hline
\begin{tabular}[c]{@{}l@{}}
Algorithm \ref{Alg:BdryEulrStep}\\
line \ref{use:theorem:in:alg:2}
\end{tabular} & 1.28E-1 & 2.00E0 & 1.34E1 & 1.99E2 & 4.71E3 \\ 
\hline
\begin{tabular}[c]{@{}l@{}}
Algorithm \ref{Alg:RefineDisc}\\
\end{tabular} & 2.96E-3 & 1.01E-2 & 5.56E-2 & 2.91E-1 & 1.19E0 \\ 
\hline\end{tabular}

\caption{Runtime in seconds for different components of Algorithm 
\ref{Alg:IterativeMethod} when applied to system \eqref{eqex:Simp7} 
with parameters $L=4$ and $\eps_{\ell_{\max}}=2$. 
\label{Tab:Timebar}}
\end{table}

\begin{figure}[p]
    \centering
    \includegraphics[trim={0 0.25cm 0 0}]{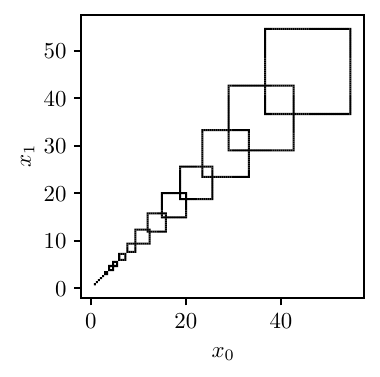}
    \caption{Approximate reachable sets of system \eqref{eqex:Simp7} with $L=4$ generated by Algorithm \ref{Alg:IterativeMethod} with $3.7\%$ relative error. Resolution adjusted for printing purposes. }
    \label{fig:Simp7Reach}
\end{figure}

\begin{figure}[p]
\centering
\includegraphics[trim={0 0.25cm 0 0}]{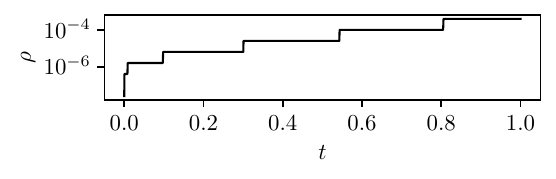}
\caption{Discretization generated by Algorithm \ref{Alg:IterativeMethod} 
to approximate the reachable set of system \ref{eqex:Simp7} with 
$L=4$, and with $3.7\%$ relative error.\label{fig:Simp7rhovals}}
\end{figure}

%--- to go on the next page -------------------------------------------------

\begin{sidewaysfigure}[p]
\begin{subfigure}{0.5\textwidth}
\includegraphics[trim={0 0.25cm 0 0}]{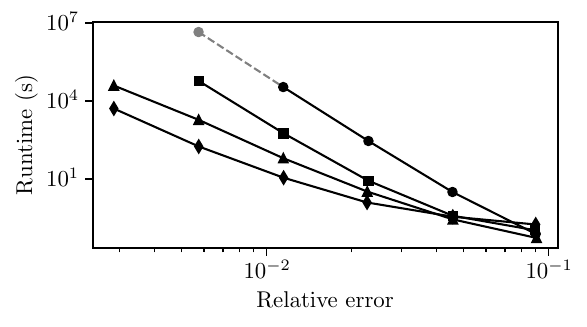}
\caption{$L=1$}
\end{subfigure}%
\begin{subfigure}{0.5\textwidth}
\includegraphics[trim={0 0.25cm 0 0}]{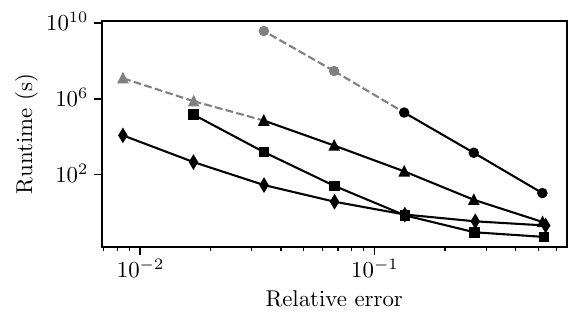}
\caption{$L=2$}
\end{subfigure}

\par\bigskip

\begin{subfigure}{0.5\textwidth}
\includegraphics[trim={0 0.25cm 0 0}]{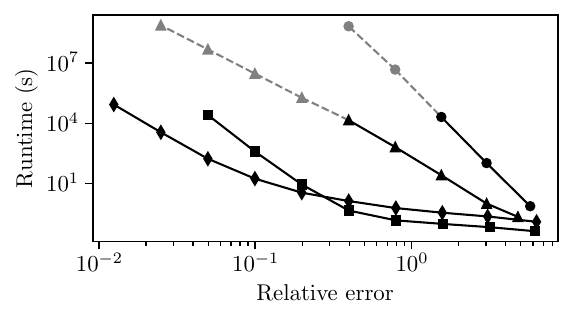}
\caption{$L=3$}
\end{subfigure}%
\begin{subfigure}{0.5\textwidth}
\includegraphics[trim={0 0.25cm 0 0}]{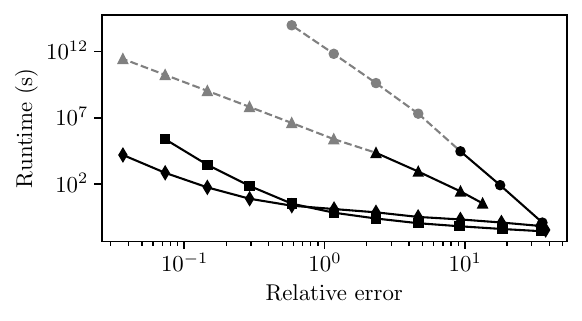}
\caption{$L=4$}
\end{subfigure}

\caption{Algorithms BA (diamonds), BU (triangles), EA (squares) and EU (circles)
applied to system \eqref{eqex:Simp7} with varying $L$.
Black markers represent measurements.
Grey markers represent approximate inferred data.\label{fig:RunTimes}}
    
\end{sidewaysfigure}

%--- to go on the next page -------------------------------------------------

\begin{figure}[p]
\centering
\includegraphics[trim={0 0.25cm 0 0}]{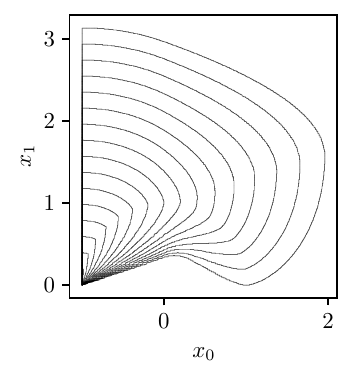}
\caption{Approximate reachable sets of system \eqref{eqex:BP} generated by Algorithm 
\ref{Alg:IterativeMethod} with $8\%$ relative error.
Resolution adjusted for printing purposes.}
\label{fig:BPReach}
\end{figure}

\begin{figure}[p]
\centering
\includegraphics[trim={0 0.25cm 0 0}]{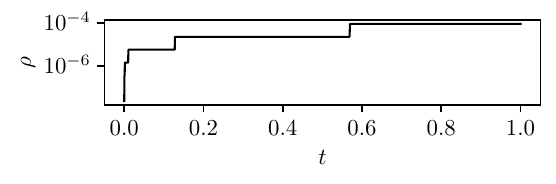}
\caption{Discretization produced by Algorithm \ref{Alg:IterativeMethod} to approximate
the reachable set of system \ref{eqex:BP} with $8\%$ relative error.} 
\label{fig:BPrhovals}
\end{figure}

\begin{figure}[p]
\centering
\includegraphics[trim={0 0.25cm 0 0}]{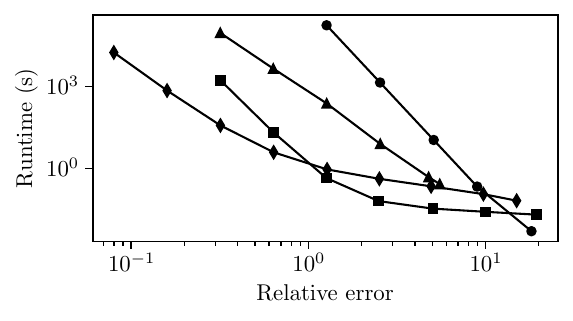}
\caption{Algorithms BA (diamonds), BU (triangles), EA (squares) and EU (circles)
applied to system \eqref{eqex:BP}.
\label{performance:example:2}}
\end{figure}

%--- to go on the next page -------------------------------------------------

In this section, we compare Algorithm \ref{Alg:IterativeMethod} with
three other methods for reachable set computation.
We introduce the following abbreviations, where B stands for the \emph{boundary} method, 
E stands for the plain \emph{Euler} scheme, A stands for \emph{adaptive}, and U stands for 
\emph{uniform}:
\begin{itemize}
\item [BA:] Algorithm \ref{Alg:IterativeMethod} from above,
\item [BU:] the boundary tracking algorithm with uniform discretization 
from \cite{Rieger:boundary},
\item [EA:] the Euler scheme with adaptive discretization from \cite{Rieger:2024}, and
\item [EU:] the Euler scheme with uniform discretization from \cite{Beyn:2007}
and \cite{Komarov}.
\end{itemize}
We measure the quality of a numerical solution in terms of the a priori error 
bounds given in the literature for each scheme (bound \eqref{eq:errdef} for Algorithm 
\ref{Alg:IterativeMethod}) and the computational cost in terms of runtime.

\medskip

We first consider a very simple example in which all relevant quantities and the 
reachable sets are explicitly known.

\begin{example}
Let $T=1$ and $L>0$, and consider the differential inclusion in $\R^2$ given by 
\begin{equation}\label{eqex:Simp7}
\dot{x}_i\in[0.9, 1.0]Lx_i,\, i=1,2,\quad x(0)=(1,1)^T.
\end{equation}
The exact reachable sets are 
\begin{equation*}
\mc{R}^F(t)=[\exp(0.9Lt),\exp(Lt)]^2,\quad t\in[0,1],
\end{equation*}
and we have $d_F=d_R=1$ and $P=Le^L$.

\medskip

We first consider Algorithm \eqref{Alg:IterativeMethod} when applied to system \eqref{eqex:Simp7} with $L=4$ and the error tolerance $\eps=2.0$, which converts to a $3.7\%$ relative error. Table \ref{Tab:Timebar} shows how the runtime of Algorithm \ref{Alg:IterativeMethod} is distributed.

\begin{itemize}
\item [i)] Line \ref{use:theorem:in:alg:1} of Algorithm \ref{Alg:BdryEulrStep}
is the part of the algorithm that is captured by the cost $\hat{C}$ from
\eqref{def:hat:C} and hence by the cost estimator $C$ from \eqref{def:C}.

\item [ii)] Since line \ref{use:theorem:in:alg:2} of Algorithm \ref{Alg:BdryEulrStep}
performs a small fixed number of lookup operations for every unique point computed
in line \ref{use:theorem:in:alg:1}, its complexity should be 
a small 
multiple of the complexity of line \ref{use:theorem:in:alg:1}.
Our computations confirm this.
\item [iii)] The computational cost of Algorithm \ref{Alg:RefineDisc} is negligible
compared to that of lines \ref{use:theorem:in:alg:1} and \ref{use:theorem:in:alg:2} 
of Algorithm \ref{Alg:BdryEulrStep}.
\end{itemize}
All in all, the computations confirm that the cost estimator $C$ from \eqref{def:C}
is a sensible measure of the computational complexity of Algorithm \ref{Alg:IterativeMethod}.

Figure \ref{fig:Simp7Reach} shows the resulting approximate reachable set boundaries generated by Algorithm \ref{Alg:IterativeMethod}. Sets shown are taken at times $t\in\{\tfrac{j}{16}:0\le j\le 16\}.$

\medskip

Figure \ref{fig:Simp7rhovals} shows that the adaptive refinement strategy of Algorithm 
\ref{Alg:IterativeMethod} works as intended.
Initially, the reachable set is small, and a very fine temporal and spatial 
discretization can be selected at a low cost.
Since the local errors are propagated exponentially,
choosing a small mesh size near $t=0$ is very desirable.
The opposite arguments hold for larger $t$.

\medskip

To test the performance of Algorithm \ref{Alg:IterativeMethod} (BA) against previous algorithms BU, EA, and EU, we now consider system \eqref{eqex:Simp7} with $L=1,2,3,4$. Figure \ref{fig:RunTimes} shows that in terms of runtime, Algorithm BA from this paper performs best.
Interestingly, the rate at which the runtime increases is very similar for both 
boundary methods, BA and BU, as well as for both Euler methods EA and EU.
\end{example}

The following example is inspired by \cite[Example 4.1]{Riedl:2021}. 

\begin{example}
Let $T=1$, and consider the problem
\begin{equation}\label{eqex:BP}
\dot{x}_1\in[0,1]\pi x_2,\quad\dot{x}_2\in-[0,1]\pi x_1,\quad x(0)=(-1,0)\in\R^2
\end{equation}
with Lipschitz constant $L=\pi$ and bound $P=\pi^2.$ 

\medskip

The approximate reachable set boundaries generated by Algorithm \ref{Alg:IterativeMethod} 
are shown in Figure \ref{fig:BPReach} at times 
$t\in\{\frac{j}{16}:0\le j\le 16\}$.

\medskip

Figures \ref{fig:BPrhovals} and \ref{performance:example:2} are the equivalent of Figures \ref{fig:Simp7rhovals} and \ref{fig:RunTimes} for system \eqref{eqex:BP}, and show that Algorithm \ref{Alg:IterativeMethod} still exhibits the desired behavior when applied to a less straightforward system. 
\end{example}

\section{Conclusion}

Runge-Kutta methods for computing reachable sets have seen significant advancements 
over the past decades, as illustrated in Figures \ref{fig:RunTimes} and 
\ref{performance:example:2}. 
The Euler scheme with uniform discretization from \cite{Beyn:2007,Komarov} achieved 
meaningful error bounds only for control systems with very small control sets 
or over very short time intervals. 
The boundary tracking algorithm with uniform discretization from
\cite{Rieger:boundary} reduced the computational cost dramatically, but
computations with decent accuracy remained a challenge.

The adaptive Euler scheme from \cite{Rieger:2024} outperforms 
\cite{Rieger:boundary} at low and moderate accuracy, but 
underperforms it asymptotically, because it tracks full-dimensional sets.
Algorithm \ref{Alg:IterativeMethod} presented in this paper integrates techniques 
from both \cite{Rieger:boundary} and \cite{Rieger:2024} to overcome this
issue.
All in all, we see in Figures \ref{fig:RunTimes} and \ref{performance:example:2}
that the computational cost for a prescribed precision has been brought down
by many orders of magnitude.

\medskip

While we have proved that Algorithm \ref{Alg:IterativeMethod} converges, 
we have not been able to prove bounds on its performance 
that reflect its behavior in computational experiments.
For now, this remains an open problem.
In addition, it would be desirable to develop boundary tracking methods
and adaptive refinement techniques based on semi-implicit and fully 
implicit schemes, which are better suited for stiff systems.

\subsection*{Acknowledgement}

This work has partly been supported by an Australian Government Research Training
Program (RTP) Scholarship.

\bibliographystyle{hplain}
\bibliography{main}

\end{document}